\numberwithin{equation}{section}
\theoremstyle{plain}
	\newtheorem{theorem}{Theorem}[section]
	\newtheorem{lemma}[theorem]{Lemma}
\theoremstyle{definition}
	\newtheorem{definition}[theorem]{Definition}
	\newtheorem{remark}{Remark}[section]
\theoremstyle{remark}
\newcommand{\fonction}[5]{
    \begin{array}{l|ccc}
        #1: & #2 & \longrightarrow & #3 \\[5pt]
        & \displaystyle{#4} & \longmapsto & \displaystyle{#5}
    \end{array}}
\newcommand{\ds}[1]{\displaystyle{#1}}
\newcommand{\1}{\mathbf{1}}
\renewcommand{\d}[1]{\mathinner{\mathrm{d}{#1}}}
\newcommand{\dd}[1]{\mathinner{\mathrm{d}{#1}}}
\newcommand{\p}{\partial}
\newcommand{\eps}{\mathrm{\varepsilon}}
\newcommand{\e}{\mathrm{e}}
\newcommand{\abs}[1]{\left| #1 \right|}
\newcommand{\norm}[1]{\left\| #1 \right\|}
\newcommand{\N}{\mathbb{N}} 
\newcommand{\R}{\mathbb{R}}
\newcommand{\Czero}{\mathbf{C}^{0}}
\newcommand{\Ck}[1]{\mathbf{C}^{#1}}
\newcommand{\Cc}[1]{\mathbf{C}_\mathbf{c}^{#1}}
\newcommand{\Lip}{\mathbf{Lip}}
\newcommand{\LL}[1]{\mathbf{L}^{#1}}
\newcommand{\HH}[1]{\mathbf{H}^{#1}}
\newcommand{\scal}[2]{\left\langle {#1} , {#2} \right\rangle}
\renewcommand{\tilde}[1]{\overset{\sim}{#1}}
\newcommand{\cU}{\mathcal{U}} 
\newcommand{\cF}{\mathcal{F}} 
\newcommand{\cS}{\mathcal{S}} 
\newcommand{\cH}{\mathcal{H}}
\newcommand{\E}{\mathbf{E}}
\newcommand{\bY}{\mathbf{Y}}
\newcommand{\bP}{\mathbf{P}}
\newcommand{\red}[1]{\textbf{\textcolor{Red}{#1}}}
\definecolor{dgreen}{rgb}{0,0.65,0}
\begin{document}

\title{\textbf{Differential Games for a Mixed ODE-PDE System}}

\author{Mauro Garavello$^1$ \qquad Elena Rossi$^2$ \qquad Abraham Sylla$^3$}

\date{ }

\maketitle

\footnotetext[1]{\texttt{mauro.garavello@unimib.it} \\
Department of Mathematics and its Applications,
University of Milano-Bicocca, via R. Cozzi 55,
20125 Milano (Italy)}

\footnotetext[2]{\texttt{elerossi@unimore.it} \\
Department of Sciences and Methods for Engineering,
University of Modena and Reggio Emilia, via Amendola 2, Pad. Morselli
42122 Reggio Emilia (Italy)}

\footnotetext[3]{\texttt{abraham.sylla@u-picardie.fr} \\
LAMFA CNRS UMR 7352, Université de Picardie Jules Verne, 
33 rue Saint-Leu, 80039 Amiens (France)}

\maketitle


\begin{abstract}
  Motivated by a vaccination coverage problem,
  we consider here a zero-sum differential game governed by a differential
  system consisting of a hyperbolic partial differential equation (PDE)
  and an ordinary differential equation (ODE).
  Two players act through their respective controls to influence the evolution
  of the system with the aim of minimizing their
  objective functionals $\mathcal F_1$ and $\mathcal F_2$, under the
  assumption that $\mathcal F_1 + \mathcal F_2 = 0$.

  First we prove a well posedness and a stability result
  for the differential system, once the
  control functions are fixed.
  Then we introduce the concept of non-anticipating strategies for both
  players and we consider the associated value functions, which solve
  two infinite-dimensional Hamilton-Jacobi-Isaacs equations in the viscosity sense.
\end{abstract}



\textit{Key Words:} Differential games; value functions;
infinite-dimensional Hamilton-Jacobi-Isaacs equation;
zero-sum game; vaccination coverage

\textit{AMS Subject Classifications:} 35Q91, 91A23, 91A80, 35L65



\section{Introduction}
We investigate a two-person zero-sum differential game governed 
by a system consisting of a hyperbolic partial differential equation (PDE)
and an ordinary differential
equation (ODE). In this game, two players use their respective controls
to influence the system's evolution, aiming to optimize their
objective functionals, $\mathcal{F}_1$ and $\mathcal{F}_2$.
Notably, we assume $\mathcal{F}_1 + \mathcal{F}_2 = 0$
in this \emph{zero-sum} game,
implying that one player's gain is the other's loss.

Here we focus on a specific system, mainly motivated by epidemiology,
which aims to consider the 
dispute about the vaccination policies between the scientific community
and groups of people (no-vax) against these practices.
More precisely, 
inspired by the classical SIR model and by the paper~\cite{Fine19861012}
dealing with optimal vaccination policies,
we introduce a simplified model incorporating
vaccination. Let $S = S(t,\xi)$ denote the density at time $t$ of susceptible
individuals, where $\xi \in [0, 1]$ represents the leaning towards vaccination: namely, a value of $\xi = 0$ indicates no vaccination, while $\xi = 1$ signifies guaranteed vaccination. 
Additionally, $I = I(t)$ denotes the
density of infected individuals at time $t$.
The evolution for the $S$ and $I$ populations is described by the following
coupled ODE-PDE system
\begin{equation}
  \label{eq:VaxSystem}
  \left\{
    \begin{array}{rcl@{\qquad}l}
      \ds{\p_t S + \p_\xi \left(g\left(\xi, u_1, u_2\right) S\right)}
      & =
      & - (f(\xi) + \alpha(I)) S,
      & t>0, \, \xi \in (0,1), \\[5pt]
      \dot I
      & =
      & \ds{- \beta I + \alpha (I) \int_0^1 S(t, \xi) \d{\xi}},
      & t>0,
    \end{array}
  \right.
\end{equation}
where $\alpha:[0, \infty) \to [0, \infty)$ is the infection rate function,
$\beta > 0$ is a coefficient taking into
account both the death and the recovery rate of the infected individuals, 
$f = f(\xi)$ is
the vaccination rate for individuals based on their vaccination 
leaning $\xi$, the flux function $g = g(\xi, u_1, u_2)$ describes the change of
individuals' leaning towards vaccinations, and $u_1 \in [0, M_1]$
and $u_2 \in [0, M_2]$
are the control variables,
respectively for the first and for the second player.
Here the first player can represent the government, which, supported by
the scientific community, implements policies to increase the number
of vaccinated people, while the second player is made up of no-vax groups
with the aim of disadvantaging such policies, for example through the use
of social media~\cite{Franceschi2023}.
Game theory for similar topics was already considered in the
literature; see for example~\cite{Chang202057, Kordonis2022, Reluga20101}.
As pointed out in~\cite{Anderson2023},
the influence of anti-vaccine groups is one of several
factors which affects the herd immunity and has a non trivial
economical and healthy cost for the society; see~\cite{Gangarosa1998356}
for the case of the pertussis control.

We consider the Cauchy problem for system~\eqref{eq:VaxSystem} without 
explicitly prescribing any boundary conditions, although the PDE is defined on a bounded domain. 
This is because the chosen flux function $g$ inherently satisfies
zero-flux boundary conditions.
First we prove the well posedness of the Cauchy problem~\eqref{eq:VaxSystem},
both in $\LL1$ and in $\LL2$,
for any given choice of the control functions.
Then we pass to a game perspective and we construct the value
functions for both players, using non anticipating strategies, which, roughly
speaking, are strategies based on the past and present status, but not on the
future one. 
The main result consists in the proof that each value
function satisfies a suitable Hamilton-Jacobi-Bellman (HJB) infinite-dimensional
equation in the viscosity sense. The key tool here is a
\textsl{dynamical programming principle}, satisfied by each value
function. 
The Hamiltonian function is strictly related to
the choice of a duality, which, in the present
infinite-dimensional setting, is not straightforward.
Indeed, the natural space for the unknown $S$ in~\eqref{eq:VaxSystem}
is $\LL1$ in the variable $\xi$, as usual for hyperbolic conservation laws.
However, the Banach space $\LL1$ lacks reflexivity, since
the bi-dual space strictly contains $\LL1$.
Hence we use
here the $\LL2$ duality. 
This has the consequence that we need to prove the
well posedness of the PDE equation in~\eqref{eq:VaxSystem}
in the $\HH1$ setting, since, otherwise, the term
$\partial_\xi \left(g\left(\xi, u_1, u_2\right) S\right)$
is not well defined in $\LL2$.

As intermediate results we also prove that the solution to the PDE is stable
with respect to the flux and that, once the control for one
player is fixed, there exists an optimal control for the other player.
The proof of the existence of optimal controls is
based on the classical method of the calculus of variations.
We remark that no total variation estimates are needed for the stability
result and that no Lipschitz continuity hypotheses
on the flux function with respect to
the control variable are needed in the proof of existence of optimal controls.

A natural question for a two-person zero-sum
differential game is whether the value functions coincide,
briefly said as the game has a value.
Under the so called Isaacs condition \cite[Equation~(2.2)]{BCD1997},
it is possible to prove that the value functions coincide,
provided that the uniqueness property for viscosity
solutions of a HJB equation holds; see~\cite{CrandallLionsI}.
Unfortunately, this is not the case in the 
infinite-dimensional setting,
see for example~\cite{CrandallLionsI, CrandallLionsII}.
As a consequence, the question whether the game has
a value is still open in general in this context.

The works of Isaacs~\cite{zbMATH03204219, zbMATH03245077} and
of Pontryagin~\cite{zbMATH03242984} initiated the theory of
two-person zero-sum differential games governed by
ordinary differential equations;
see also the monographs~\cite{zbMATH03365084, zbMATH03245077}
for a complete introduction.
The original motivation was the study
of military problems and the well known pursuit-evasion
game~\cite{zbMATH05877492}
represents the most simple example of a zero-sum differential game in this
subject. The definitions of strategies and of the value were introduced
by Varaiya in~\cite{zbMATH03247458},
see also~\cite{zbMATH03414102, zbMATH03279427}.
The paper~\cite{zbMATH05604590} contains the first proof that the value
function satisfies a Hamilton-Jacobi-Isaacs equation in viscosity sense.
In the context of infinite dimensional Isaacs equation, the literature contains
very few references. We cite here the recent paper~\cite{zbMATH07815234}, where
a zero-sum differential game between a single player and a mass of agents
is considered; see also~\cite{zbMATH07124784}.


The paper is structured as follows. 
In \Cref{sec:Framework} we give the basic definitions and hypotheses
and we state the analytic results. More precisely, we introduce the notions
of non-anticipating strategies and of value functions, we state the
well posedness and stability for system~\eqref{eq:VaxSystem}.
We also provide results about existence of optimal controls in the case
of a single player, and about the fact that the value functions satisfy
a dynamical programming principle and consequently a Hamilton-Jacobi-Isaacs
equation in viscosity sense.
Finally, \Cref{sec:proofs} contains the proofs of
these results.

\section{General framework}
\label{sec:Framework}
This section is devoted to present both the general framework
of the problem, in \Cref{ssec:Definitions}
and the analytical results, in \Cref{ssec:Results}.
Throughout, the notation $\R^+ = [0, \infty)$ is used.

\subsection{Basic definitions and assumptions}
\label{ssec:Definitions}

Here the positive constants $M_1$ and $M_2$ describe the maximum
strength for the two controls $u_1$ and $u_2$. Therefore the sets
\begin{equation}
  \label{eq:admissible-controls}
  \cU_1 \coloneqq \LL\infty\left((0, \infty); [0, M_1]\right)
  \qquad \textrm{ and } \qquad
  \cU_2 \coloneqq \LL\infty\left((0, \infty); [0, M_2]\right)
\end{equation}
denote the admissible controls for the first and the second player,
respectively.
The first equation
in~\eqref{eq:VaxSystem} is a hyperbolic partial differential equation, while the
second one is an ordinary differential equation.
\smallskip

System~\eqref{eq:VaxSystem} is supplemented with the initial conditions
\begin{equation}
  \label{eq:IC}
  \left\{
    \begin{aligned}
      S(0, \xi) & = S_o(\xi), \qquad \xi \in (0,1), \\
      I(0) & = I_o,
    \end{aligned}
  \right.
\end{equation}
where $S_o:[0,1] \to \R^+$ and $I_o \in \R^+$ are given.
\smallskip

For the functions $\alpha$, $f$, and $g$ in~\eqref{eq:VaxSystem}
we introduce the following assumptions.
\begin{enumerate}[label=\bf{($\alpha$)}, ref=\textup{\textbf{($\alpha$)}},
  align =left]
\item \label{hyp:(alpha)} $\alpha \in \Lip(\R^+ ; \R^+)$
  satisfies $\alpha(0) = 0$.
\end{enumerate}

\begin{enumerate}[label=\bf{(F)}, ref=\textup{\textbf{(F)}}, align =left]
    \item \label{hyp:(F)} $f \in \Lip([0, 1] ; [0, 1])$ is non decreasing and
    satisfies $f(0) = 0$, $f(1) = 1$.
\end{enumerate}

\begin{enumerate}[label=\bf{(G)}, ref=\textup{\textbf{(G)}}, align=left]
\item \label{hyp:(G)}
  $g \in \Czero([0,1] \times [0, M_1] \times [0, M_2]; \R)$ satisfies:
  \begin{enumerate}
  \item for all $(u, v) \in [0, M_1] \times [0, M_2]$ the map
    $\xi \mapsto g(\xi, u, v)$ belongs to $\Ck{2}([0, 1] ; \R)$;
  \item for every $(u, v) \in [0, M_1] \times [0, M_2]$,
    $g(0, u, v) = g(1, u, v) = 0$;

  \item there exists $\gamma > 0$ such that
    $\abs{\partial_\xi g\left(\xi, u, v\right)} \le \gamma$ for every
    $\xi \in [0,1]$, $u \in [0, M_1]$, and $v \in [0, M_2]$.
  \end{enumerate}
\end{enumerate}

\begin{enumerate}[label=\bf{(G-1)}, ref=\textup{\textbf{(G-1)}}, align=left]
\item \label{hyp:(G-1)}
  $g$ satisfies \ref{hyp:(G)} and $g(\xi, u, v) = g_1(\xi) + g_2(\xi) u
  + g_3(\xi) v$ for some $g_1, g_2, g_3 \in \Cc2\left([0,1]; \R\right)$.
\end{enumerate}

\begin{remark}
  \label{rmk:alpha}
  The infection rate function $\alpha$, also known as the force of infection,
  models the rate susceptible individuals become infected.
  In classical SIR-type models, the simple linear law
  \begin{equation}
    \label{eq:alpha-1}
    \alpha(I) = \bar \alpha I,
  \end{equation}
  with $\bar \alpha > 0$,
  is considered.
  As pointed out in~\cite{16c9b553-e1f8-3b95-84d9-5427dbebba05},
  a complex phenomenon like the process of infection hardly can be
  represented by a simple law as~\eqref{eq:alpha-1}. The first nonlinear
  models, of type $\bar \alpha I^p$ with $p>1$, for
  the rate of infections were proposed
  in~\cite{capasso1979mathematical, zbMATH03294097,
    16c9b553-e1f8-3b95-84d9-5427dbebba05}.

  More realistic rate functions, also called Holling type functional responses,
  of the type
  \begin{equation}
    \label{eq:alpha-3}
    \alpha(I) = \frac{\bar \alpha I^p}{1 + \beta I^q},
  \end{equation}
  with $p, q > 0$ and $\beta > 0$,
  were proposed in \cite{zbMATH03619693, zbMATH04191458};
  see~\Cref{fig:rate-functions}.
  For more details 
  see also~\cite{zbMATH07621960}. 
    \begin{figure}
    \centering
    \begin{tikzpicture}[line cap=round,line join=round,x=1.cm,y=0.7cm]

      \draw[<-, line width=1.1pt] (0.5, 4.) -- (0.5, 0.);
      \draw[->, line width=1.1pt] (0.5, .5) -- (5., .5);

      \draw[domain=0:10, smooth, variable=\x, color=blue, line
      width=1.2pt] plot ({0.5+4*\x/10}, {0.5+3*\x / (1+\x)});

      \node[anchor=north,inner sep=0] at (0.8,3.9) {$\alpha$};
      \node[anchor=south,inner sep=0] at (5.,.7) {$I$};

      \draw[<-, line width=1.1pt] (0.5+7.5, 4.) -- (0.5+7.5, 0.);
      \draw[->, line width=1.1pt] (0.5+7.5, .5) -- (5.+7.5, .5);

      \draw[domain=0:10, smooth, variable=\x, color=blue, line
      width=1.2pt] plot ({8+4*\x/10}, {0.5 + 5 * \x / (1+\x*\x)});

      \node[anchor=north west,inner sep=0] at (8.1,3.9) {$\alpha$};
      \node[anchor=south,inner sep=0] at (5.+7.5,.7) {$I$};
    \end{tikzpicture}
    \caption{Left: the graph of the infection rate
      function~\eqref{eq:alpha-3} with $p=q=1$.
      Right: the graph of the infection rate function~\eqref{eq:alpha-3} with $p=1$ and $q=2$.}
    \label{fig:rate-functions}
  \end{figure}
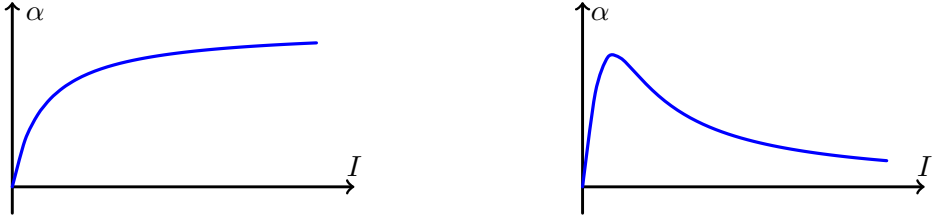
\end{remark}

\begin{remark}
  \label{rmk:fluxG}
  Note that $g$ depends on the tendency variable $\xi$ and also on the two controls
  $u_1$ and $u_2$, which describe the efforts of two competitive players aiming at
  guiding the choices of individuals regarding the vaccination. As an example, one
  player can represent the national health system, which is worried about the possible
  spread of an epidemic, while anti-vaccination activist groups can be identified with
  the other player. A possible choice of the flux function $g$ is given by a
  regularization of the map
  \begin{equation}
    \label{eq:fluxG}
    \left(\xi, u_1, u_2\right) \mapsto
    \xi \left(1 - \xi\right) \left(\xi - \frac{1}{2}
      - \lambda \frac{u_2}{M_2} \1_{A_2}(\xi)
      + (1-\lambda) \frac{u_1}{M_1} \1_{A_1}(\xi)\right),
  \end{equation}
  where the controls $u_1 \in [0, M_1]$ and $u_2 \in [0, M_2]$ act respectively on the
  sets $A_1 \subseteq [0,1]$ and $A_2 \subseteq [0,1]$; see~\Cref{fig:fluxG}. In
  formula~\eqref{eq:fluxG}, $\1_{A_1}$ and $\1_{A_2}$ denote the characteristic
  functions of the sets $A_1$ and $A_2$ respectively. The coefficient
  $\lambda \in \mathopen]0,1\mathclose[$ acts as a weight between the two
  controls. Clearly, formula~\eqref{eq:fluxG} represents in general a discontinuous
  function of $\xi$. In order to avoid difficulties arising in conservation laws with
  discontinuous flux, we prefer to consider the flux function $g$
  in~\eqref{eq:VaxSystem} as a regularization of~\eqref{eq:fluxG}.
  \begin{figure}[!htbp]
    \centering
    \begin{tikzpicture}[line cap=round,line join=round,x=1.cm,y=0.7cm]

      \draw[<-, line width=1.1pt] (0.5, 4.) -- (0.5, 0.); \draw[->, line
      width=1.1pt] (0.5, 2.) -- (5., 2.);

      \draw[domain=0:1, smooth, variable=\x, color=blue, line
      width=1.2pt] plot ({0.5+4*\x}, {2+25*\x*(1-\x)*(\x-0.5)});

      \node[anchor=north,inner sep=0] at (0.8,3.9) {$g$};
      \node[anchor=south,inner sep=0] at (5.,2.1) {$\xi$};

      \draw[<-, line width=1.1pt] (0.5+7.5, 4.) -- (0.5+7.5, 0.);
      \draw[->, line width=1.1pt] (0.5+7.5, 2.) -- (5.+7.5, 2.);

      \draw[domain=0:0.1, smooth, variable=\x, color=dgreen, line
      width=2.pt, dashed] plot ({8+4*\x}, {2+25*\x*(1-\x)*(\x-0.5)});

      \draw[domain=0.1:0.45, smooth, variable=\x, color=dgreen, line
      width=2.pt, dashed] plot ({8+4*\x}, {2+25*\x*(1-\x)*(\x-0.5+0.2)});

      \draw[domain=0.45:0.666, smooth, variable=\x, color=dgreen, line
      width=2.pt, dashed] plot ({8+4*\x}, {2+25*\x*(1-\x)*(\x-0.5-0.3/4+0.2)});

      \draw[domain=0.666:0.85, smooth, variable=\x, color=dgreen, line
      width=2.pt, dashed] plot ({8+4*\x}, {2+25*\x*(1-\x)*(\x-0.5-0.3/4)});

      \draw[domain=0.85:1, smooth, variable=\x, color=dgreen, line
      width=2.pt, dashed] plot ({8+4*\x}, {2+25*\x*(1-\x)*(\x-0.5)});

      \draw[color=blue, line width=1.2pt] (8, 2) [out=-60, in=120]
      to (8.22, 1.4) [out=300, in=200] to (8.7, 1.55)
      [out=20, in=228] to (9.6, 2.6)
      [out=48, in=180] to (9.8, 2.7)
      [out=0, in=210] to (10, 2.8)
      [out=30, in=210] to (10.38, 3.3)
      [out=30, in=110] to (10.6, 3.)
      [out=290, in=210] to (10.9, 2.75)
      [out=30, in=180] to (11.2, 2.9)
      [out=0, in=145] to (11.5, 3.)
      [out=-35, in=110] to (12, 2);

      \draw[line width=1.1pt, color=red] (8+4*0.1, 0.5) -- (8+4*0.666, 0.5);
      \draw[color=red, fill] (8+4*0.1, 0.5) circle (.06cm);
      \draw[color=red, fill] (8+4*0.666, 0.5) circle (.06cm);
      \node[anchor=south, inner sep=0] at (9.5, 0.6) {$A_1$};

      \draw[line width=1.1pt, color=red] (8+4*0.45, 0.3) -- (8+4*0.85, 0.3);
      \draw[color=red, fill] (8+4*0.45, 0.3) circle (.06cm);
      \draw[color=red, fill] (8+4*0.85, 0.3) circle (.06cm);
      \node[anchor=north, inner sep=0] at (10.5, 0.2) {$A_2$};

      \node[anchor=north west,inner sep=0] at (8.1,3.9) {$g$};
      \node[anchor=south,inner sep=0] at (5.+7.5,2.1) {$\xi$};
    \end{tikzpicture}
    \caption{In the left picture the plot of the function in~\eqref{eq:fluxG}
      with controls $u_1 = u_2 = 0$. In this case no regularization is needed.
      \\
      In the right picture the solid-line plot represents a possible
      regularization of the map in~\eqref{eq:fluxG}, represented
      with a dashed line, with controls $u_1 = M_1$ and $u_2 = M_2$ acting
      on the intervals $A_1 = [0.1, 0.666]$ and $A_2 = [0.45, 0.85]$.
    }
    \label{fig:fluxG}
  \end{figure}
\end{remark}

\subsection{Analytical Results}
\label{ssec:Results}
First we introduce the definition of solution
for system~\eqref{eq:VaxSystem}--\eqref{eq:IC} and we state an existence and uniqueness result for such system.

\begin{definition}
    \label{def:solution}
    Fix $T > 0$, $u_1 \in \LL\infty\left((0, T); [0, M_1]\right)$,
    and $u_2 \in \LL\infty\left((0, T); [0, M_2]\right)$. We say that the couple $(S, I)$ is a solution to~\eqref{eq:VaxSystem}
    on $[0, T]$ with initial condition
    $(S_o, I_o) \in \LL{2}\left((0,1);\R\right) \times \R$ if the following
    conditions are satisfied.
    \begin{enumerate}      
    \item $S \in \Czero([0, T]; \LL{2}\left((0,1); \R^+\right))$
      and $I \in \Czero([0, T]; \R)$.
      
    \item For all test functions $\phi \in \Cc{\infty}((-\infty, T) \times (0, 1); \R)$, it
    holds
    \begin{equation}
        \label{eq:WeakSolution}
        \int_0^{+\infty} \int_0^1
        S (\p_t \phi + g(\xi, u_1(t), u_2(t)) \p_\xi \phi) -
        (f + \alpha(I)) \phi ) \d{\xi} \d{t}
        + \int_0^1 S_o(\xi) \phi(0, \xi) \d{\xi} = 0.
    \end{equation}

  \item For every $t \in [0, T]$
    \begin{equation}
      \label{eq:2}
      I(t) = I_o - \int_0^t \beta I(\tau) \dd \tau + \int_0^t \alpha(I(\tau))
      \left(\int_0^1 S(\tau, \xi) \dd \xi\right) \dd \tau.
    \end{equation}
    \end{enumerate}
\end{definition}

\begin{definition}
  \label{def:solution-global}
  Fix $u_1 \in \cU_1$ and $u_2 \in \cU_2$.
  We say that the couple $(S, I)$ is a solution
  to~\eqref{eq:VaxSystem} on $\R^+$ with initial condition
  $(S_o, I_o) \in \LL{2}\left((0,1);\R^+\right) \times \R^+$ if it is
  a solution to~\eqref{eq:VaxSystem} on $[0, T[$ for every $T > 0$.
\end{definition}

\begin{remark}
  \label{rmk:characteristics-representation}
  With similar reasoning as in~\cite[Theorem~3.2]{Keimer2017}, if
  $(S, I)$ is a solution to~\eqref{eq:VaxSystem} with initial
  condition $(S_o, I_o) \in \LL{2}\left((0,1);\R^+\right) \times \R^+$
  according to \Cref{def:solution-global}, then
  \begin{equation}
    \label{eq:characteristics-formula-general}
    S(t, \xi) = S_o (X(0; t, \xi)) \exp \left(-\int_{0}^{t}
      \left[f(X(s; t, \xi)) + \alpha(I\left(s\right))
        + \p_\xi g(X(s; t, \xi), u_1(s), u_2(s)) \right]\dd{s} \right),
  \end{equation}
  where $X(s; t, \xi)$ solves the Cauchy problem
  \begin{equation}
    \label{eq:charact-curves}
    \left\{
      \begin{array}{l}
        \frac{\dd{}}{\dd s} X(s; t, \xi) = g(X(s; t, \xi), u_1(s), u_2(s)),
        \\
        X(t; t, \xi) = \xi.
      \end{array}
    \right.
  \end{equation}
\end{remark}

The next result deals with existence and uniqueness of solution
to~\eqref{eq:VaxSystem} together with growth estimates.
\begin{theorem}
  \label{th:WellPosedness}
  Assume~\ref{hyp:(alpha)}-\ref{hyp:(F)}-\ref{hyp:(G)}.
  Fix  $u_1 \in \cU_1$ and $u_2 \in \cU_2$.
  Then for all
  $S_o \in \LL{2}((0, 1); \R^+)$ and $I_o \in \R^+$, there exists a unique
  solution to~\eqref{eq:VaxSystem} on $\R^+$ with initial condition
  $(S_o, I_o)$ in the sense of~\Cref{def:solution-global}. Moreover, for all
  $t > 0$,
  \begin{equation}
    \label{eq:WellPosedness}
    \|S(t)\|_{\LL{1}((0,1);\R)}
    \leq \|S_o\|_{\LL{1}((0,1);\R)},
    \quad
    \|S(t)\|_{\LL{2}((0,1);\R)}
      \leq \e^{t \gamma/2} \|S_o\|_{\LL{2}((0,1);\R)}
  \end{equation}
  and
  \begin{equation}
    \label{eq:estimate-sol_I}
      I(t) \leq \|S_o\|_{\LL{1}((0,1);\R)} + I_o.
  \end{equation}
  If moreover $S_o \in \HH1\left((0,1); \R^+\right)$, then for all $t > 0$,
  $S(t) \in \HH1\left((0,1); \R^+\right)$.
\end{theorem}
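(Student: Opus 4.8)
The plan is to prove \Cref{th:WellPosedness} by a fixed-point argument that decouples the PDE from the ODE, exploiting the fact that the characteristic representation~\eqref{eq:characteristics-formula-general}–\eqref{eq:charact-curves} gives an explicit solution formula once $I$ is known. Concretely, I would fix a horizon $T>0$ and work on the Banach space $\Czero([0,T];\R^+)$ for $I$. Given $J \in \Czero([0,T];\R^+)$, assumptions~\ref{hyp:(G)} (in particular the $\Ck2$-regularity in $\xi$ and the uniform bound $|\p_\xi g|\le\gamma$) guarantee that~\eqref{eq:charact-curves} has a unique global flow $X(s;t,\xi)$ which, by the boundary condition $g(0,\cdot,\cdot)=g(1,\cdot,\cdot)=0$ in~\ref{hyp:(G)}(b), maps $[0,1]$ into $[0,1]$ (the endpoints are stationary, so characteristics cannot leave the interval — this is exactly why no boundary conditions are needed). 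Plugging this flow into~\eqref{eq:characteristics-formula-general} with $J$ in place of $I$ defines $S^J(t,\xi)$, which is nonnegative because $S_o\ge 0$ and the exponential is positive; then I define the map $\Phi(J)(t) := I_o - \int_0^t\beta J(\tau)\,\dd\tau + \int_0^t\alpha(J(\tau))\bigl(\int_0^1 S^J(\tau,\xi)\,\dd\xi\bigr)\,\dd\tau$ and seek a fixed point.

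The key estimates come next. For the $\LL1$ bound: changing variables $\xi = X(t;0,y)$ in $\int_0^1 S^J(t,\xi)\,\dd\xi$, the Jacobian is $\exp\bigl(\int_0^t \p_\xi g(X(s;0,y),u_1,u_2)\,\dd s\bigr)$, which exactly cancels the $\p_\xi g$ term in the exponential in~\eqref{eq:characteristics-formula-general}; since $f\ge 0$ and $\alpha(J)\ge 0$, one gets $\|S^J(t)\|_{\LL1}\le\|S_o\|_{\LL1}$, the first inequality in~\eqref{eq:WellPosedness}. For the $\LL2$ bound: squaring~\eqref{eq:characteristics-formula-general} and doing the same change of variables, only one factor of the Jacobian is available to absorb the $\p_\xi g$ contribution, leaving a residual $\exp\bigl(\int_0^t \p_\xi g\,\dd s\bigr)\le \e^{t\gamma}$ inside the integral (together with the harmless factors $\exp(-2\int f)\le 1$ and $\exp(-2\int\alpha(J))\le 1$), hence $\|S^J(t)\|_{\LL2}^2\le \e^{t\gamma}\|S_o\|_{\LL2}^2$, which is the second inequality in~\eqref{eq:WellPosedness}. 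The bound~\eqref{eq:estimate-sol_I} on $I$ then follows from Grönwall applied to~\eqref{eq:2}: dropping the $-\beta I$ term and using $\int_0^1 S(\tau,\xi)\,\dd\xi \le \|S_o\|_{\LL1}$ gives $I(t)\le I_o + \int_0^t\alpha(I(\tau))\|S_o\|_{\LL1}\,\dd\tau$; since $\alpha$ is Lipschitz with $\alpha(0)=0$, $\alpha(I)\le \mathrm{Lip}(\alpha)\,I$, and Grönwall closes it — one should check the stated clean bound $I(t)\le\|S_o\|_{\LL1}+I_o$ follows by noting $\tfrac{\dd}{\dd t}(I+\text{something})$ or simply by comparison with the ODE $\dot z = \alpha(z)\|S_o\|_{\LL1}$, $z(0)=I_o$; actually the cleanest route is to add the two equations of~\eqref{eq:VaxSystem}: $\tfrac{\dd}{\dd t}\bigl(\int_0^1 S\,\dd\xi + I\bigr) = -\int_0^1(f+\alpha(I))S\,\dd\xi - \beta I + \alpha(I)\int_0^1 S\,\dd\xi = -\int_0^1 f S\,\dd\xi - \beta I \le 0$, so $I(t)\le \int_0^1 S(t)\,\dd\xi + I(t)\le \|S_o\|_{\LL1}+I_o$.

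For existence and uniqueness, I would show $\Phi$ is a contraction on $\Czero([0,\tau];\R^+)$ for $\tau$ small: the dependence of $X^J$ on $J$ enters only through the $\alpha(J(s))$ term in the exponential, which is Lipschitz in $J$ (since $\alpha$ is Lipschitz and the flow/exponential are uniformly bounded on $[0,\tau]$ by the a priori estimates just derived), so $\|S^{J_1}(\tau)-S^{J_2}(\tau)\|_{\LL1}\le C\tau\|J_1-J_2\|_{\Czero}$, and likewise $\|\Phi(J_1)-\Phi(J_2)\|_{\Czero}\le C'\tau\|J_1-J_2\|_{\Czero}$; picking $\tau$ with $C'\tau<1$ gives a unique local fixed point, and since the a priori bounds~\eqref{eq:WellPosedness}–\eqref{eq:estimate-sol_I} are uniform in $T$, the solution extends to all of $\R^+$ by iterating on successive intervals of equal length. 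Uniqueness of the weak solution in the sense of \Cref{def:solution} follows because any weak solution satisfies the characteristic representation (as in~\cite[Theorem~3.2]{Keimer2017}, cf.\ \Cref{rmk:characteristics-representation}), hence coincides with the fixed point. Finally, the $\HH1$-propagation: if $S_o\in\HH1$, differentiate~\eqref{eq:characteristics-formula-general} in $\xi$; this produces $\p_\xi S_o(X(0;t,\xi))\cdot\p_\xi X(0;t,\xi)$ plus terms involving $\p_\xi X$ times $f'$, $\p^2_{\xi} g$ and $\p_\xi g$ — all bounded on $[0,T]$ thanks to~\ref{hyp:(F)} ($f$ Lipschitz) and~\ref{hyp:(G)}(a),(c) ($g(\cdot,u,v)\in\Ck2$ and $|\p_\xi g|\le\gamma$), and $\p_\xi X(0;t,\xi)=\exp\bigl(\int_t^0\p_\xi g\,\dd s\bigr)$ is bounded below and above — so $\p_\xi S(t)\in\LL2$. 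The main obstacle I anticipate is the $\HH1$ step: one must justify differentiating under the integral sign in the flow equation to control $\p_\xi X$ and $\p^2_\xi X$, and carefully track that the regularity $\xi\mapsto g(\xi,u,v)\in\Ck2$ is exactly what is needed (no more), given that $u_1,u_2$ are merely $\LL\infty$ in time so one cannot expect any time-regularity of $X$ beyond Lipschitz; this is a standard but slightly delicate ODE-with-parameters computation.
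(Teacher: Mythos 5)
Your proposal follows essentially the same route as the paper: decouple the PDE (solved by characteristics, with the $\LL1$/$\LL2$ bounds obtained from the representation formula and the change of variables along the flow), run a Banach fixed point on $I$ in $\Czero([0,T];\R^+)$, get the bound~\eqref{eq:estimate-sol_I} by summing the two equations, extend globally using the fact that the a priori bounds keep the contraction time uniformly bounded below, and prove uniqueness and $\HH1$-propagation from the characteristic representation. The only slip is in the definition of your Picard map: $\Phi(J)(t) = I_o - \int_0^t\beta J + \int_0^t\alpha(J)\int_0^1 S^J$ freezes $J$ in the \emph{entire} right-hand side, so it need not take values in $\R^+$ (take $I_o=0$ and $J$ large), and then $\alpha(\Phi(J))$ is undefined at the next iterate since~\ref{hyp:(alpha)} only defines $\alpha$ on $\R^+$. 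The paper avoids this by composing $\Lambda$ (the PDE solve for given $I$) with $\Upsilon$ (the genuine ODE solve for given $S$), where positivity of the ODE solution is preserved because $\alpha(0)=0$; you can repair your version either the same way or by extending $\alpha$ by zero on $(-\infty,0)$ and checking nonnegativity of the fixed point a posteriori. Everything else, including the $\LL2$ bound derived directly from the representation formula rather than from the paper's energy estimate on smooth approximations, is a correct and equivalent variant.
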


We state now the well posedness result for~\eqref{eq:VaxSystem}.
\begin{theorem}
  \label{th:Stability}
  Assume~\ref{hyp:(alpha)}-\ref{hyp:(F)}-\ref{hyp:(G)}.  Fix
  $u_1 \in \cU_1$,
  $u_2 \in \cU_2$,
  $S_o, \overline{S}_o \in \LL{2}((0, 1); \R^+)$, and
  $I_o, \overline{I}_o \in \R^+$. Denote by $(S, I)$,
  resp.~$(\overline{S}, \overline{I})$, the global solution to
  \eqref{eq:VaxSystem} with initial condition $(S_o, I_o)$,
  resp.~$(\overline{S}_o, \overline{I}_o)$.  Then for all $t > 0$,
  \begin{equation}
    \label{eq:stability-estimate}
    \begin{split}
    & \quad \norm{S(t)- \overline S(t)}_{\LL1((0,1);\R)}
      + \abs{I(t) - \overline I(t)}
    \\
    & \le \left[\norm{S_o - \overline S_o}_{\LL1((0,1); \R)}
      +  \e^{K_1 \, t} \abs{I_o-\overline I_o}\right]
      \left(1 + t \e^{K_2 t} \mathcal K_t \exp(t \e^{K_2 t} \mathcal K_t)\right),
    \end{split}
  \end{equation}
  where
  \begin{align}
    \label{eq:K12}
    K_1 & \coloneqq \beta + \Lip(\alpha) \norm{S_o}_{\LL1((0,1);\R)},
    &
      K_2 & \coloneqq \Lip(\alpha) \norm{\overline S_o}_{\LL1((0,1);\R)}
  \end{align}
  depend on $\beta$, on $\Lip(\alpha)$, and on the $\LL1$-norm
  of $S_o$ and $\overline S_o$
  respectively, while
  \begin{equation}
    \label{eq:mathcalK}
    \mathcal{K}_t \coloneqq \Lip(\alpha) 
    \left(\norm{\overline S_o}_{\LL1((0,1);\R)} + \e^{K_1 \, t }\abs{\overline I_o} \right)= K_2 + \Lip(\alpha) \abs{\overline I_o} \e^{K_1 \, t}
  \end{equation}
  depends on $K_1$, on $\Lip(\alpha)$, and on $\overline I_o$ .
\end{theorem}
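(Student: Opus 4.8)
The strategy is to use the representation formula \eqref{eq:characteristics-formula-general}--\eqref{eq:charact-curves} for $S$ in terms of the characteristics, which are the \emph{same} for $(S,I)$ and $(\overline S, \overline I)$ because the flux $g$ does not depend on $I$; only the exponential weights differ, through $\alpha(I(s))$ versus $\alpha(\overline I(s))$. First I would write, for fixed $t$ and after the change of variables $\xi \mapsto X(0;t,\xi)$ (whose Jacobian is controlled by \ref{hyp:(G)}(c), in fact it exactly cancels the $\p_\xi g$ term in the exponent), the bound
\begin{equation*}
  \norm{S(t) - \overline S(t)}_{\LL1((0,1);\R)}
  \le \norm{S_o - \overline S_o}_{\LL1((0,1);\R)}
  + \norm{S_o}_{\LL1((0,1);\R)}
  \int_0^t \abs{\alpha(I(s)) - \alpha(\overline I(s))} \dd s,
\end{equation*}
using that $\abs{e^{-a} - e^{-b}} \le \abs{a-b}$ for $a,b \ge 0$ together with the fact that the $f + \p_\xi g$ contribution is common to both. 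By \ref{hyp:(alpha)} this gives $\norm{S(t) - \overline S(t)}_{\LL1} \le \norm{S_o - \overline S_o}_{\LL1} + \norm{S_o}_{\LL1}\Lip(\alpha)\int_0^t \abs{I(s) - \overline I(s)}\dd s$.

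Next I would estimate $\abs{I(t) - \overline I(t)}$ directly from the integral equation \eqref{eq:2}: subtracting the two equations and using $\abs{\alpha(I)\int_0^1 S - \alpha(\overline I)\int_0^1 \overline S} \le \Lip(\alpha)\abs{I-\overline I}\norm{S(\tau)}_{\LL1} + \alpha(\overline I(\tau))\norm{S(\tau)-\overline S(\tau)}_{\LL1}$, together with $\alpha(\overline I(\tau)) \le \Lip(\alpha)\overline I(\tau) \le \Lip(\alpha)(\norm{\overline S_o}_{\LL1} + \overline I_o)$ from \eqref{eq:estimate-sol_I} and $\norm{S(\tau)}_{\LL1} \le \norm{S_o}_{\LL1}$ from \eqref{eq:WellPosedness}, I get
\begin{equation*}
  \abs{I(t) - \overline I(t)} \le \abs{I_o - \overline I_o}
  + K_1 \int_0^t \abs{I(\tau) - \overline I(\tau)}\dd\tau
  + \mathcal K_t \int_0^t \norm{S(\tau) - \overline S(\tau)}_{\LL1}\dd\tau,
\end{equation*}
where $K_1$ and $\mathcal K_t$ are exactly the constants in \eqref{eq:K12}--\eqref{eq:mathcalK} (the $\overline I_o e^{K_1 t}$ refinement in $\mathcal K_t$ will come from first applying Grönwall to a cruder bound on $\overline I(\tau)$ alone). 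A plain Grönwall in $\abs{I - \overline I}$ then yields $\abs{I(t) - \overline I(t)} \le \big(\abs{I_o - \overline I_o} + \mathcal K_t \int_0^t \norm{S(\tau)-\overline S(\tau)}_{\LL1}\dd\tau\big) e^{K_1 t}$, and likewise the crude bound $\abs{I(t)-\overline I(t)} \le e^{K_1 t}\abs{I_o - \overline I_o} + \dots$ feeds the constant $\mathcal K_t$.

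Finally I would close the loop. Insert the $I$-estimate into the $S$-estimate to obtain a single scalar inequality for $y(t) \coloneqq \norm{S(t)-\overline S(t)}_{\LL1} + \abs{I(t) - \overline I(t)}$, or more cleanly iterate: plugging the $S$-bound into the Grönwall'd $I$-bound gives $\abs{I(t)-\overline I(t)} \le [\,\norm{S_o-\overline S_o}_{\LL1} + e^{K_1 t}\abs{I_o - \overline I_o}\,]\cdot(1 + \text{something}) $ after a second Grönwall step in the $\int_0^t\norm{S-\overline S}_{\LL1}$ term, producing the factor $t\,e^{K_2 t}\mathcal K_t \exp(t\,e^{K_2 t}\mathcal K_t)$. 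Adding back $\norm{S(t)-\overline S(t)}_{\LL1}$, which is itself bounded by $\norm{S_o - \overline S_o}_{\LL1}$ plus a multiple of $\int_0^t \abs{I-\overline I}$, and absorbing constants gives precisely \eqref{eq:stability-estimate}. The main obstacle is purely bookkeeping: organizing the double Grönwall argument so that the constants come out in the stated form \eqref{eq:K12}--\eqref{eq:mathcalK} with the $e^{K_1 t}$ and $e^{K_2 t}$ factors in the right places — in particular getting the $e^{K_2 t}$ (which uses $\norm{\overline S_o}_{\LL1}$, not $\norm{S_o}_{\LL1}$) rather than a symmetric version requires being careful about which solution's $\LL1$-bound is used at each step. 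No total-variation or $\HH1$ bounds are needed here; the $\Ck2$-regularity of $g$ enters only through the well-definedness and $\mathcal C^1$-dependence of the characteristic flow $X$, which is already implicit in Remark \ref{rmk:characteristics-representation}.
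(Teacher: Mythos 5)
Your proposal is correct and takes essentially the same route as the paper: the two building-block estimates you derive (the $\LL1$ bound on $S-\overline S$ via the characteristics representation, and the Gronwall bound on $I-\overline I$ from the integral form of the ODE) are precisely the content of Lemmas~\ref{lmm:WellPosedness2} and~\ref{lmm:WellPosedness4}, which the paper then combines by summing and applying Gronwall once, exactly as in your closing step. The only cosmetic discrepancy is that your splitting of the PDE difference produces the coefficient $\Lip(\alpha)\norm{S_o}_{\LL1((0,1);\R)}$ where the theorem's constant $K_2$ uses $\norm{\overline S_o}_{\LL1((0,1);\R)}$; as you yourself flag, and as the remark following Lemma~\ref{lmm:WellPosedness2} confirms, the estimate is symmetric in the two initial data, so choosing the other decomposition resolves this bookkeeping point.
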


The proofs of~\Cref{th:WellPosedness} and of~\Cref{th:Stability} are
deferred to Section~\ref{ssec:WellPosedness}. They are based on the
separate study of the two equations in~\eqref{eq:VaxSystem} and an
application of Banach Fixed Point Theorem.

\begin{remark}
    When dealing with the unique solution to~\eqref{eq:VaxSystem} with initial condition $(S_o, I_o)$ and controls $u_1, u_2$, it may be relevant to emphasize the dependence on those values. As far as it concerns the controls, we write
\begin{align}
  \label{eq:Y}
  S(t) & = S_{u_1,u_2} (t)
    &
  I(t) & = I_{u_1,u_2} (t)
\end{align}
and we omit to specify the control functions $u_1,u_2$ when they are clear from the context. 
\end{remark}

\begin{remark}
  \Cref{th:Stability} provides only estimates about stability with
  respect to the initial datum. With similar techniques, it is
  possible to consider also stability estimates with respect to the
  flux function $g$ and therefore to the controls. However it would 
  require a bound on the total variation of the
  solution to the PDE; see~\cite{CR2018} for similar results in this direction.
\end{remark}

\bigskip

We now pass to optimal control problems for~\eqref{eq:VaxSystem}.
To this aim, for positive numbers $\kappa, \theta$,
we introduce the cost functional
$\cF:\LL2\left((0,1); \R^+\right) \times \R^+ \times
\cU_1 \times
\cU_2 \to \R$, defined as
\begin{equation}
  \label{eq:CostFunctional}
  \begin{split}
    \cF\left(S_o, I_o, u_1, u_2\right)
    & \coloneqq \kappa \int_{0}^{+\infty} \e^{-\theta t} (I(t) + u_1(t)) \d{t} \\
    & \quad\quad - \int_{0}^{+\infty} \e^{-\theta t} \biggl[ u_2(t)
    + \int_0^1 g(\xi, u_1(t), u_2(t)) \d{\xi} \biggr] \d{t},
  \end{split}
\end{equation}
where $(S, I)$ solves~\eqref{eq:VaxSystem} with initial datum
$(S_o, I_o)$ and controls $u_1$ and $u_2$.
The first term in~\eqref{eq:CostFunctional} is proportional to the number
of infected individuals and to the efforts of the first player, while the
second term is proportional to the efforts of the second player and
to the area of the region where $g$ is positive. Roughly speaking, this area
is related to the change in the leaning towards vaccination.
So the aim of the first, respectively of the second, player is to minimize, respectively maximize, the functional $\cF$ in~\eqref{eq:CostFunctional}. 
The first result deals with the existence of optimal controls.
\smallskip

\begin{theorem}
  \label{th:DecoupledOptimization}
  Assume~\ref{hyp:(alpha)}-\ref{hyp:(F)}-\ref{hyp:(G)} and
  fix $(S_o, I_o) \in \LL{2}((0, 1); \R^+) \times \R^+$.
  Then:
  \begin{enumerate}[label=(\roman*)]
  \item \label{it:u1} for all $u_2 \in \cU_2$,
    the map $u_1 \mapsto \cF\left(S_o, I_o, u_1, u_2\right)$ admits a minimizer in
    $\cU_1$;

  \item \label{it:u2} for all $u_1 \in \cU_1$,
    the map $u_2 \mapsto \cF\left(S_o, I_o, u_1, u_2\right)$ admits a maximizer in
    $\cU_2$.
  \end{enumerate}
\end{theorem}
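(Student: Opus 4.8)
The plan is to prove both parts by the direct method of the calculus of variations, exploiting the stability result in \Cref{th:Stability} together with the weak-$*$ compactness of the admissible control sets. I will present the argument for part~\ref{it:u1}; part~\ref{it:u2} is entirely symmetric, replacing ``infimum'' by ``supremum'' and ``minimizer'' by ``maximizer''. Fix $u_2 \in \cU_2$ and set $m \coloneqq \inf_{u_1 \in \cU_1} \cF(S_o, I_o, u_1, u_2)$. First I would check that $m$ is finite: the estimate~\eqref{eq:estimate-sol_I} bounds $I(t)$ uniformly in the controls, the controls themselves are bounded in $\LL\infty$ by $M_1$ and $M_2$, and $g$ is continuous on a compact set, so the integrand in~\eqref{eq:CostFunctional} is dominated by $C \e^{-\theta t}$ with $C$ independent of $u_1$; hence $\cF$ is uniformly bounded on $\cU_1$ and $m \in \R$. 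Take a minimizing sequence $(u_1^n)_{n}$ in $\cU_1$.

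The key compactness step: since $\cU_1 = \LL\infty((0,\infty);[0,M_1])$ is bounded in $\LL\infty((0,\infty);\R) = (\LL1((0,\infty);\R))'$, by the Banach--Alaoglu theorem there is a subsequence (not relabelled) and some $u_1^\star \in \LL\infty((0,\infty);\R)$ with $u_1^n \rightharpoonup^* u_1^\star$. One must verify $u_1^\star \in \cU_1$, i.e.\ $0 \le u_1^\star \le M_1$ a.e.; this follows because the convex set $\{w : 0 \le w \le M_1 \text{ a.e.}\}$ is weak-$*$ closed (it is the intersection of half-spaces $\int w \varphi \ge 0$, $\int (M_1 - w)\varphi \ge 0$ over $\varphi \ge 0$ in $\LL1$). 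It then remains to show $\cF(S_o, I_o, \cdot, u_2)$ is sequentially lower semicontinuous along this weak-$*$ convergent sequence, which combined with $\cF(S_o,I_o,u_1^n,u_2) \to m$ yields $\cF(S_o,I_o,u_1^\star,u_2) \le m$, hence equality and $u_1^\star$ is a minimizer.

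For the lower semicontinuity I would split $\cF$ into its three additive pieces. The term $\kappa \int_0^\infty \e^{-\theta t} u_1(t)\,\dd t = \langle u_1, \kappa \e^{-\theta t}\rangle$ is \emph{linear} in $u_1$ and the weight $\kappa \e^{-\theta t}$ lies in $\LL1((0,\infty);\R)$, so it is weak-$*$ continuous. The delicate terms are $\kappa \int_0^\infty \e^{-\theta t} I_{u_1^n,u_2}(t)\,\dd t$ and $-\int_0^\infty \e^{-\theta t}[u_2(t) + \int_0^1 g(\xi,u_1^n(t),u_2(t))\,\dd\xi]\,\dd t$, both of which depend on $u_1^n$ nonlinearly. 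Here is the expected main obstacle: weak-$*$ convergence of $u_1^n$ does \emph{not} in general pass through the nonlinearity $g(\xi,\cdot,u_2)$ nor through the solution map $u_1 \mapsto I_{u_1,u_2}$, so I cannot argue by naive continuity. The way around it, and the reason the statement explicitly notes that no Lipschitz dependence on the control is needed, is to first pass to a \emph{strongly} convergent subsequence at the level of the states. Concretely: by \Cref{th:WellPosedness}, for each $n$ and each $T>0$ the solution $S^n = S_{u_1^n,u_2}$ satisfies $\|S^n(t)\|_{\LL2} \le \e^{t\gamma/2}\|S_o\|_{\LL2}$ and $I^n = I_{u_1^n,u_2}$ satisfies $I^n(t) \le \|S_o\|_{\LL1} + I_o$; moreover from~\eqref{eq:2} one gets a uniform Lipschitz bound on $t \mapsto I^n(t)$ on $[0,T]$ (the right-hand side of~\eqref{eq:2} is controlled by $\beta(\|S_o\|_{\LL1}+I_o)$ plus $\Lip(\alpha)(\|S_o\|_{\LL1}+I_o)\|S_o\|_{\LL1}$). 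Thus $(I^n)_n$ is bounded and equi-Lipschitz on $[0,T]$, so by Arzelà--Ascoli — after a diagonal extraction over $T = 1,2,3,\dots$ — a subsequence converges uniformly on compact sets to some $I^\star \in \Czero(\R^+;\R)$.

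The last thing to nail down is the identification of the limits and the passage in the remaining integrals. I would apply the characteristics representation~\eqref{eq:characteristics-formula-general}--\eqref{eq:charact-curves}: the characteristic curves $X^n(\cdot\,;t,\xi)$ solve an ODE with right-hand side $g(\cdot,u_1^n(s),u_2(s))$, which under \ref{hyp:(G-1)} — or more generally because $g$ is continuous and $\xi\mapsto g$ is $\Ck2$ — depends on $u_1^n$ in a controlled way; combined with the weak-$*$ convergence of $u_1^n$ one obtains (via the integral form of the ODE and dominated convergence) convergence of $X^n$ and then of $S^n(t,\cdot)$, at least weakly in $\LL2$, to the state $S^\star$ generated by $u_1^\star$, and passing to the limit in~\eqref{eq:2} identifies $I^\star = I_{u_1^\star,u_2}$. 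For the cost: since $I^n \to I^\star$ uniformly on compacts and $0 \le I^n(t) \le \|S_o\|_{\LL1}+I_o$ uniformly, dominated convergence gives $\int_0^\infty \e^{-\theta t} I^n(t)\,\dd t \to \int_0^\infty \e^{-\theta t} I^\star(t)\,\dd t$. For the $g$-term, the map $w \mapsto \int_0^1 g(\xi,w,u_2(t))\,\dd\xi$ is continuous on $[0,M_1]$, but since convergence of $u_1^n$ is only weak-$*$ I would instead argue that $\int_0^\infty \e^{-\theta t}\int_0^1 g(\xi,u_1^n(t),u_2(t))\,\dd\xi\,\dd t$ need not converge directly; the cleanest fix is to observe that under \ref{hyp:(G-1)} this term is affine in $u_1^n$, namely $\int_0^\infty \e^{-\theta t}[\int_0^1 g_1 + u_1^n(t)\int_0^1 g_2 + u_2(t)\int_0^1 g_3]\,\dd t$, so it \emph{is} weak-$*$ continuous with the $\LL1$ weight $\e^{-\theta t}\int_0^1 g_2$; if one does not assume \ref{hyp:(G-1)} one restricts attention to this structural case, or alternatively notes that convexity/concavity is not needed because the $g$-term only enters the cost additively and, combined with the (lower semi)continuity already established for the $I$-term and the linearity of the $u_1$-term, the functional $\cF(S_o,I_o,\cdot,u_2)$ is sequentially weak-$*$ lower semicontinuous on $\cU_1$. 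This yields $\cF(S_o,I_o,u_1^\star,u_2) \le \liminf_n \cF(S_o,I_o,u_1^n,u_2) = m$, so $u_1^\star$ is the desired minimizer. Part~\ref{it:u2} follows by the same scheme applied to $-\cF$, i.e.\ replacing lower by upper semicontinuity, which costs nothing since the $I$-term enters $-\cF$ with a sign making $-\cF(S_o,I_o,u_1,\cdot)$ upper semicontinuous along weak-$*$ convergent sequences in $\cU_2$ by the identical dominated-convergence and affine-in-$u_2$ arguments. I expect the identification $I^\star = I_{u_1^\star,u_2}$ via the characteristic formula to be the technically heaviest point, since it requires controlling the passage to the limit inside the nonlinear ODE~\eqref{eq:charact-curves} under merely weak-$*$ convergence of the controls.
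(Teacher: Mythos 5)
Your proposal is correct and follows essentially the same route as the paper's proof: weak-$*$ compactness of the minimizing sequence of controls, a priori bounds plus Ascoli-Arzelà to get uniform convergence of $I^n$, the characteristics representation to identify the limit state, and the affine structure \ref{hyp:(G-1)} to pass to the limit both inside the characteristic ODE (the paper splits $\int g_2(X_n)\,u_1^{(n)}$ into a weak-$*$ convergent term and a remainder that vanishes by uniform convergence of $X_n$, exactly the combination you describe) and in the $g$-term of the cost. Note that the paper's proof, like yours, genuinely invokes \ref{hyp:(G-1)} even though the theorem statement lists only \ref{hyp:(G)}, so your hedging about the non-affine case mirrors a gap already present in the paper rather than introducing a new one.
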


The proof is deferred to \Cref{ssec:DecoupledOptimization}.
\smallskip

Following~\cite[Chapter VIII]{BCD1997}, we give the
definitions of non-anticipating strategies for both players.

\begin{definition}
    \label{def:strategy}
    A strategy for the first player is a map $\mathcal{S}_1 : \cU_2 \to \cU_1$.
    Moreover, the strategy $\mathcal{S}_1$ is said to be
    non-anticipating if, for all $t > 0$ and for
    all $u_2, \overline{u}_2 \in \cU_2$,
    such that $u_2(s) = \overline{u}_2(s)$ for a.e.~$s \in [0, t]$, then
    \begin{equation*}
      \mathcal{S}_1(u_2) (s) = \mathcal{S}_1(\overline{u}_2) (s)
    \end{equation*}
    for a.e.~$s \in [0, t]$.
    With the symbol $\Gamma$ we denote the set of all the non-anticipating
    strategies for the first player.
  \end{definition}

\begin{definition}
  \label{def:strategy-2}
  A strategy for the second player is a map
  $\mathcal{S}_2 : \cU_1 \to \cU_2$.
  Moreover, the strategy $\mathcal{S}_2$ is said to be
  non-anticipating if, for all $t > 0$ and for
  all $u_1, \overline{u}_1 \in \cU_1$,
  such that $u_1(s) = \overline{u}_1(s)$ for a.e.~$s \in [0, t]$, then
  \begin{equation*}
    \mathcal{S}_2(u_1) (s) = \mathcal{S}_2(\overline{u}_1) (s)
  \end{equation*}
  for a.e.~$s \in [0, t]$.
  With the symbol $\Delta$ we denote the set of all the non-anticipating
  strategies for the second player.
\end{definition}

\smallskip

For readers' convenience, define
\begin{equation}
  \label{eq:CostFunctionalBis1}
    \ell(I, u_1,u_2) \coloneqq 
    \kappa (I + u_1) - u_2 - \int_0^1 g(\xi, u_1, u_2) \dd{\xi},
\end{equation}
so that the cost functional $\cF$, defined in~\eqref{eq:CostFunctional},
can be compactly written as
\begin{equation}
  \label{eq:CostFunctionalBis2}
  \cF(S_o, I_o, u_1, u_2) 
    = \int_{0}^{+\infty} \e^{-\theta t} \ell(I(t), u_1(t), u_2(t)) \dd{t}.
\end{equation}

Now we introduce the lower and upper values for the game.
\begin{definition}
    \label{def:ValuesGame}
    Let $\cF$ be defined by \eqref{eq:CostFunctional}.
    The lower value $V$ and upper value $U$ of the game are respectively
    \begin{equation}
        \label{eq:ValuesGame}
        \begin{split}
          V(S_o, I_o) 
          & \coloneqq \adjustlimits \inf_{\mathcal{S}_1 \in \Gamma} 
          \sup_{u_2 \in \cU_2}
          \cF(S_o, I_o, \mathcal{S}_1(u_2), u_2),
          \\
          U(S_o, I_o) 
          & \coloneqq \adjustlimits\sup_{\mathcal{S}_2 \in \Delta} 
          \inf_{u_1 \in \cU_1}
          \cF(S_o, I_o, u_1, \mathcal{S}_2(u_1)).  
        \end{split}
    \end{equation}
\end{definition}

The lower and upper values satisfy a dynamic programming principle.

\begin{theorem}
    \label{th:DPP}
    Assume~\ref{hyp:(alpha)}-\ref{hyp:(F)}-\ref{hyp:(G)}. Let $\ell$ be as in~\eqref{eq:CostFunctionalBis2}. For all 
    $(S_o, I_o) \in \LL{2}((0, 1); \R) \times \R^+$ and for all $T > 0$, we have
    \begin{equation}
        \label{eq:DPP}
        V(S_o, I_o) =
        \adjustlimits\inf_{\mathcal S_1 \in \Gamma} \sup_{u_2 \in \cU_2} 
        \left\{
          \displaystyle \int_{0}^{T} \e^{-\theta t} \ell\left(
           I_{\cS_1,u_2}(t), \mathcal{S}_1(u_2)(t), u_2(t)\right) \dd{t}
          + V(S_{\cS_1,u_2}(T), I_{\cS_1,u_2}(T)) \e^{-\theta T}
        \right\},
    \end{equation}
    where, with the notation~\eqref{eq:Y}, $(S,I)$ is the solution to~\eqref{eq:VaxSystem} with controls $\mathcal S_1(u_2), u_2$,
    and
    \begin{equation}
      \label{eq:DPP-2}
      U(S_o, I_o) =
      \adjustlimits\sup_{\mathcal S_2 \in \Delta} \inf_{u_1 \in \cU_1} 
      \left\{
        \displaystyle \int_{0}^{T} \e^{-\theta t} \ell\left(
          I_{u_1, \mathcal S_2}(t)
          , u_1(t),\mathcal{S}_2(u_1)(t)
        \right) \dd{t}
        + U(S_{u_1, \mathcal S_2}(T), I_{u_1, \mathcal S_2}(T)) \e^{-\theta T}  
      \right\},
  \end{equation}
  where, with the notation~\eqref{eq:Y}, $(S,I)$ is the solution to~\eqref{eq:VaxSystem} with controls $u_1, \mathcal S_2(u_1)$.
\end{theorem}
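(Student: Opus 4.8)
\textbf{Proof proposal for \Cref{th:DPP} (Dynamic Programming Principle).}
The plan is to prove the first identity \eqref{eq:DPP} for the lower value $V$; the identity \eqref{eq:DPP-2} for $U$ follows by an entirely symmetric argument, exchanging the roles of the players and of $\inf/\sup$. Write $W(S_o,I_o)$ for the right-hand side of \eqref{eq:DPP}. The proof splits into the two inequalities $V\le W$ and $V\ge W$, both relying on the semigroup (causality) structure of the system: if $(S,I)$ solves \eqref{eq:VaxSystem} on $[0,\infty)$ with controls $(u_1,u_2)$ and initial datum $(S_o,I_o)$, then for $t\ge T$ the pair $(S(t),I(t))$ coincides with the solution started at time $T$ from $(S(T),I(T))$ with the shifted controls $u_i(T+\cdot)$; combined with the change of variable $t\mapsto T+t$ and the factorisation $\e^{-\theta t}=\e^{-\theta T}\e^{-\theta(t-T)}$ this gives the additive decomposition
\begin{equation*}
  \cF(S_o,I_o,u_1,u_2)=\int_0^T\e^{-\theta t}\ell(I(t),u_1(t),u_2(t))\dd t+\e^{-\theta T}\cF\bigl(S(T),I(T),u_1(T+\cdot),u_2(T+\cdot)\bigr).
\end{equation*}
Uniqueness from \Cref{th:WellPosedness} is what makes this splitting legitimate.

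For the inequality $V\le W$: fix $\mathcal S_1\in\Gamma$ and $\varepsilon>0$. For each $(S_o,I_o)$ choose an $\varepsilon$-optimal strategy $\mathcal S_1^\star$ for the game started from $(S_{\mathcal S_1,u_2}(T),I_{\mathcal S_1,u_2}(T))$; the delicate point is that this choice must be made \emph{measurably/consistently} in the "final state", but in fact one only needs it for the finitely many (indeed, one, after fixing $u_2$ on $[0,T]$) relevant states, so a plain $\varepsilon$-infimum suffices. Concatenate: on $[0,T]$ let the new strategy respond as $\mathcal S_1$ does to the restriction $u_2|_{[0,T]}$, and on $(T,\infty)$ let it respond as $\mathcal S_1^\star$ does to the shifted tail $u_2(T+\cdot)$. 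One checks this concatenation is non-anticipating (this uses precisely \Cref{def:strategy}: the response on $[0,T]$ depends only on $u_2|_{[0,T]}$). Feeding an arbitrary $u_2\in\cU_2$ into this strategy and using the additive decomposition above yields $V(S_o,I_o)\le W(S_o,I_o)+\varepsilon(1+\e^{-\theta T})$, and $\varepsilon\to0$ closes it. One must also verify the measurability/finiteness needed so that the $\sup_{u_2}$ and the decomposition interact correctly — this is where one uses that $\ell$ is bounded on bounded sets (via the a priori bounds \eqref{eq:WellPosedness}--\eqref{eq:estimate-sol_I}) so all quantities are finite.

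For the reverse inequality $V\ge W$: fix $\varepsilon>0$ and pick an $\varepsilon$-optimal $\mathcal S_1\in\Gamma$ for $V(S_o,I_o)$. Given any $\mathcal S_1'\in\Gamma$ one must produce a control $u_2$ making the bracket in $W$ large; the standard device is: for each $u_2$ on $[0,T]$, run $\mathcal S_1'$ to reach the state $(S(T),I(T))$, then choose $\tilde u_2\in\cU_2$ nearly attaining $\sup_{u_2}\cF(S(T),I(T),\mathcal S_1'(\cdot),\cdot)=V(S(T),I(T))$ for the restricted game, and splice $u_2|_{[0,T]}$ with $\tilde u_2$; the non-anticipating property of $\mathcal S_1'$ guarantees that altering $u_2$ only after time $T$ does not change the response on $[0,T]$, so the state $(S(T),I(T))$ is unchanged. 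Taking sup over $u_2|_{[0,T]}$ and then inf over $\mathcal S_1'$, and using the decomposition once more, gives $W(S_o,I_o)\le V(S_o,I_o)+C\varepsilon$. Again let $\varepsilon\to0$.

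\textbf{Main obstacle.} The routine analytic estimates are harmless; the real care goes into the bookkeeping with non-anticipating strategies — verifying that the concatenated strategies are again non-anticipating, and that "switching to an $\varepsilon$-optimal strategy at the intermediate time $T$, as a function of the intermediate state" can be carried out without any measurable-selection pathology. Following \cite[Chapter VIII]{BCD1997}, the trick is to never select over a continuum of states: after fixing the behaviour of the opponent's control on $[0,T]$, the intermediate state is determined, so only a single $\varepsilon$-optimal choice is needed in each step, and the sup/inf over that control is taken \emph{outside}. Making this interchange rigorous — essentially a careful use of causality plus the identity that the value of the shifted game equals $V$ evaluated at the shifted state — is the crux of the proof.
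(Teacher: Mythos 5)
Your proposal follows essentially the same route as the paper's proof: both inequalities are obtained by concatenating non-anticipating strategies (respectively splicing the control $u_2$ at time $T$), with $\varepsilon$-optimal selections made only after the opponent's control on $[0,T]$ is fixed so that a single intermediate state is involved, combined with the causality decomposition of $\cF$ across $t=T$. The only slip is the claimed identity $\sup_{u_2}\cF\bigl(S(T),I(T),\mathcal S_1'(\cdot),\cdot\bigr)=V\bigl(S(T),I(T)\bigr)$, which should be the inequality $\geq$ (taken with the time-shifted strategy, as in the paper's $\overline{\cS}_1$ of~\eqref{eq:proofDPP2c}); since you only use the resulting upper bound $V(S(T),I(T))\le \cF(\ldots,\tilde u_2)+\varepsilon$, the argument is unaffected.
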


The proof is deferred to~\Cref{ssec:Game}. Above, we make a slightly abuse of notation writing only $\mathcal{S}_1$ or $\mathcal{S}_2$ instead of $\mathcal{S}_1(u_2)$ or $\mathcal{S}_2(u_1)$.

{Taking advantage of \Cref{th:DPP}, we prove that $V$ and $U$ both solve 
an Hamilton-Jacobi equation.}
Define, for all $\mathbf{Y} = 
(S, I) \in \HH{1}((0, 1); \R) \times \R$ and 
{$\mathbf{P} = (p, q) \in \LL{2}((0, 1); \R) \times \R$}, the
pre-Hamiltonian
\begin{equation}
    \label{eq:pre-Ham}
    \cH(\mathbf{Y}, \mathbf{P}, u_1, u_2) \coloneqq
    \langle\p_\xi (gS) + (f + \alpha(I)) S , p\rangle_{\LL{2}} 
    + \left(\beta I - \alpha(I) \int_0^1 S (\xi) \dd \xi \right)q 
    - \ell(I, u_1, u_2) 
\end{equation}
and the Hamiltonians
\begin{equation}
    \label{eq:Hamiltonian1}
    \underline {\mathcal H}(\mathbf{Y}, \mathbf{P}) \coloneqq \min_{u_2 \in [0, M_2]} \max_{u_1 \in [0, M_1]}
    \cH(\mathbf{Y}, \mathbf{P}, u_1, u_2)
\end{equation}
and 
\begin{equation}
    \label{eq:Hamiltonian2}
    \overline{\cH}(\mathbf{Y}, \mathbf{P}) \coloneqq \max_{u_1 \in [0, M_1]} \min_{u_2 \in [0, M_2]} 
    \cH(\mathbf{Y}, \mathbf{P}, u_1, u_2).
\end{equation}

\begin{remark}
  Note that in the definition of the Hamiltonian
  functions~\eqref{eq:Hamiltonian1} and~\eqref{eq:Hamiltonian2} we
  decided to consider $S \in \HH1$ and the \textsl{dual variable}
  $p \in \LL2$.  Indeed, there is not any natural duality working in
  our setting, because if $S \in \LL2$ then the derivative
  $\partial_\xi (gS)$ is not well defined in $\LL2$.  Similarly,
  choosing the scalar product of $\HH{1}$, and thus $p \in \HH{1}$,
  would require higher regularity for $S$ to ensure
  $\p_\xi (gS) \in \HH{1}$.
\end{remark}

We give now the definition of viscosity solution for the Hamilton-Jacobi
equation $\theta U + \underline{\cH}(\mathbf{Y}, \nabla U) = 0$.
The definition with the Hamiltonian $\overline{\cH}$,
instead of $\underline{\cH}$,
is completely identical.

\begin{definition}
  Fix $\theta > 0$ and let $\underline{\cH}$ be defined by
  \eqref{eq:Hamiltonian1}. Consider the equation
  \begin{equation}
    \label{eq:HJ}
    \theta U + \underline{\cH}(\mathbf{Y}, \nabla U) = 0,
    \quad \mathbf{Y} \in \HH{1}((0,1); \R) \times \R.
  \end{equation}

  We say that $U\in \Czero(\HH{1}((0,1); \R) \times \R; \R)$ is a
  viscosity sub-solution of \eqref{eq:HJ} if, for every
  $\phi \in \Ck{1}(\LL{2}((0, 1); \R) \times \R; \R)$,
  \[
    \mathbf{Y}_o \in \HH{1}((0,1); \R) \times \R \text{ is a local
      maximum of } U - \phi \implies \theta U(\mathbf{Y}_o) +
    \underline{\cH}(\mathbf{Y}_o, \nabla \phi(\mathbf{Y}_o))
    \leq 0.
  \]

  Similarly, $U\in \Czero(\HH{1}((0,1); \R) \times \R; \R)$ is a
  viscosity super-solution of \eqref{eq:HJ} if, for every
  $\phi \in \Ck{1}(\LL{2}((0, 1); \R) \times \R; \R)$,
  \[
    \mathbf{Y}_o \in \HH{1}((0,1); \R) \times \R \text{ is a local
      minimum of } U - \phi \implies \theta U(\mathbf{Y}_o) +
    \underline{\cH}(\mathbf{Y}_o, \nabla \phi(\mathbf{Y}_o))
    \geq 0.
  \]

  Finally, $U\in \Czero(\HH{1}((0,1); \R) \times \R; \R)$ is viscosity
  solution of \eqref{eq:HJ} if it is both a viscosity sub-solution and
  a viscosity super-solution.
\end{definition}

The value functions $U$ and $V$, defined in~\eqref{eq:ValuesGame}, are viscosity solutions to some
Hamilton-Jacobi equations.
\begin{theorem}
  \label{th:ValueViscosity1}
  Assume~\ref{hyp:(alpha)}-\ref{hyp:(F)}-\ref{hyp:(G)}.
  Let $\underline{\cH}$ and $\overline{\cH}$
  be defined by \eqref{eq:Hamiltonian1} and by \eqref{eq:Hamiltonian2}. Then 
  \begin{enumerate}[label=(\roman*)]
  \item  $V$, defined by \eqref{eq:ValuesGame}, is a viscosity
    solution to~\eqref{eq:HJ} with Hamiltonian $\underline{\cH}$.
    
  \item $U$, defined by \eqref{eq:ValuesGame}, is a viscosity solution
    to~\eqref{eq:HJ} with Hamiltonian $\overline{\cH}$. 
  \end{enumerate}
\end{theorem}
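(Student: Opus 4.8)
The plan is to derive the Hamilton-Jacobi equation for each value function from the dynamic programming principle of \Cref{th:DPP}, following the classical argument of \cite{zbMATH05604590} and \cite[Chapter~VIII]{BCD1997}, but carefully adapted to the infinite-dimensional setting. I will treat $V$ (the case of $U$ being symmetric, swapping the roles of the players and of $\min/\max$). The first preliminary step is to establish continuity of $V$ on $\HH1((0,1);\R)\times\R$: using \Cref{th:Stability} one controls $\abs{\cF(S_o,I_o,u_1,u_2)-\cF(\overline S_o,\overline I_o,u_1,u_2)}$ uniformly in $(u_1,u_2)$ by a modulus of $\norm{S_o-\overline S_o}_{\LL1}+\abs{I_o-\overline I_o}$ (the discount factor $\e^{-\theta t}$ together with the growth bounds of \Cref{th:WellPosedness} makes the improper integral converge), and since the $\LL1$-norm is dominated by the $\LL2$- hence $\HH1$-norm on $(0,1)$, $V\in\Czero(\HH1\times\R;\R)$. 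A second preliminary point is a chain-rule/differentiability lemma: if $\phi\in\Ck1(\LL2\times\R;\R)$ and $(S,I)$ is the solution associated to controls $u_1,u_2$ with $S_o\in\HH1$, then $t\mapsto\phi(S(t),I(t))$ is absolutely continuous with derivative $-\langle\p_\xi(gS)+(f+\alpha(I))S,\p_S\phi\rangle_{\LL2}-(\beta I-\alpha(I)\int_0^1 S)\,\p_I\phi$, read off from the weak formulation \eqref{eq:WeakSolution} tested against the Fréchet differential and from \eqref{eq:2}; this is where the $\HH1$-regularity is essential, since it legitimizes the integration by parts turning $\langle S,g\p_\xi(\p_S\phi)\rangle$ into $-\langle\p_\xi(gS),\p_S\phi\rangle$.

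For the subsolution property, fix $\phi\in\Ck1(\LL2\times\R;\R)$ and a local maximum point $\bY_o=(S_o,I_o)\in\HH1\times\R$ of $V-\phi$; WLOG $V(\bY_o)=\phi(\bY_o)$. I apply \eqref{eq:DPP} at $\bY_o$ for small $T$. Choosing the constant strategy $\cS_1\equiv u_1^\star$ for an arbitrary fixed $u_1^\star\in[0,M_1]$ (which is trivially non-anticipating), and then, among the responses $u_2$, selecting for each small $T$ an $\eps$-optimal constant control $u_2\equiv u_2^\star$ near the point that achieves $\max_{u_1}$ in the definition of $\underline\cH$ — more precisely, I argue that $V(\bY_o)\le \int_0^T\e^{-\theta t}\ell(I(t),u_1^\star,u_2^\star(t))\,dt+V(S(T),I(T))\e^{-\theta T}$ for a suitable $u_2^\star$ and hence, since $V\le\phi$ near $\bY_o$ and $V(\bY_o)=\phi(\bY_o)$,
\[
  0\le \int_0^T\e^{-\theta t}\ell(I(t),u_1^\star,u_2^\star(t))\,dt+\phi(S(T),I(T))\e^{-\theta T}-\phi(\bY_o).
\]
Dividing by $T$, letting $T\to0^+$, using the chain rule above, continuity of $(t\mapsto(S(t),I(t)))$ at $t=0$, and the fact that the optimal-response control can be taken with values clustering at the inner maximizer, one obtains $\theta\phi(\bY_o)-\langle\p_\xi(gS_o)+(f+\alpha(I_o))S_o,\p_S\phi(\bY_o)\rangle_{\LL2}-(\beta I_o-\alpha(I_o)\int_0^1 S_o)\p_I\phi(\bY_o)-\ell(I_o,u_1^\star,u_2)\ge 0$ is violated — i.e.\ with the correct signs one gets $\theta V(\bY_o)+\cH(\bY_o,\nabla\phi(\bY_o),u_1^\star,u_2)\le 0$ for the selected $u_2$; since $u_1^\star$ was arbitrary and $u_2$ can be pushed to the minimizer, this yields $\theta V(\bY_o)+\underline\cH(\bY_o,\nabla\phi(\bY_o))\le 0$. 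The supersolution inequality is obtained dually: at a local minimum of $V-\phi$, using that for \emph{every} response the infimum over strategies is nearly attained, one picks a near-optimal strategy $\cS_1$, freezes it to extract an almost-constant control $u_1$, lets the adversary play the worst constant $u_2$, and passes to the limit to get $\theta V(\bY_o)+\underline\cH(\bY_o,\nabla\phi(\bY_o))\ge0$.

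The main obstacle is the passage between strategies (maps $\cU_2\to\cU_1$) and the pointwise $\min_{u_2}\max_{u_1}$ in $\underline\cH$: one must show that in the limit $T\to0$ the $\adjustlimits\inf_{\cS_1}\sup_{u_2}$ in \eqref{eq:DPP} of the short-horizon running cost reduces to $\min_{u_2\in[0,M_2]}\max_{u_1\in[0,M_1]}(-\ell+\langle\text{drift terms}\rangle)$, which is exactly the classical ``strategies generated by constant controls / measurable selection'' argument but now requiring that the flow map $u\mapsto(S_u,I_u)$ and its cost be continuous in a topology compatible with the $\LL1$-stability estimate of \Cref{th:Stability}; one must be slightly careful because there is no Lipschitz dependence on the controls. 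A second, more technical, obstacle is justifying the $\HH1$-chain rule rigorously for merely $\Ck1$ test functions on $\LL2$: the differential $\p_S\phi(S(t),I(t))$ lies only in $\LL2$, not in $\HH1$, so the integration by parts $\langle S,g\p_\xi\psi\rangle_{\LL2}=-\langle\p_\xi(gS),\psi\rangle_{\LL2}$ with $\psi\in\LL2$ needs $gS\in\HH1$, which is guaranteed by \Cref{th:WellPosedness} together with \ref{hyp:(G)}(i); one handles this by a density/approximation argument, approximating $\phi$ by functions that are $\Ck1$ with $\HH1$-valued differential along the orbit, or equivalently by regularizing the orbit itself and using the stability estimate to pass to the limit. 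Everything else — convergence of the improper integrals, continuity of $V$, and the elementary limits $\frac1T\int_0^T\e^{-\theta t}\ell\,dt\to\ell(\bY_o,\cdot,\cdot)$ — is routine given \Cref{th:WellPosedness} and \Cref{th:Stability}.
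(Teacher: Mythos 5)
Your overall architecture (continuity of $V$, a chain rule for $\phi(\bY(t))$ along orbits, and a DPP-based contradiction argument) matches the paper's, but the sub-solution half of your argument has the quantifiers in the wrong order and, as written, proves the wrong inequality. At a local maximum of $V-\phi$ you fix a \emph{constant} strategy $\cS_1\equiv u_1^\star$ and then let the adversary pick an $\eps$-optimal response $u_2$, which necessarily depends on $u_1^\star$. Passing to the limit this yields: for every $u_1^\star$ there exists $u_2$ with $\theta V(\bY_o)+\cH(\bY_o,\nabla\phi(\bY_o),u_1^\star,u_2)\le 0$, i.e.
\[
\theta V(\bY_o)+\max_{u_1}\min_{u_2}\cH(\bY_o,\nabla\phi(\bY_o),u_1,u_2)
=\theta V(\bY_o)+\overline{\cH}(\bY_o,\nabla\phi(\bY_o))\le 0 .
\]
Since $\overline{\cH}\le\underline{\cH}$, this is strictly weaker than the required $\theta V(\bY_o)+\underline{\cH}(\bY_o,\nabla\phi(\bY_o))\le 0$, which needs a \emph{single} $u_2$ working for \emph{all} $u_1$. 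The paper gets this by arguing by contradiction: if $\theta\phi(\bY_o)+\underline{\cH}(\bY_o,\nabla\phi(\bY_o))=\nu>0$, then for every $u_2$ player one can respond with some $u_1(u_2)$ making the pre-Hamiltonian at least $\nu$, and a measurable-selection argument assembles these responses into a genuinely non-anticipating strategy $\cS_1^*\in\Gamma$ such that the running cost plus the variation of $\phi$ is $\le-\nu t/4$ uniformly over $u_2$ (\Cref{lmm:ValueViscosity1}, the analogue of \cite[Lemma~1.11, Chapter~VIII]{BCD1997}); taking the sup over $u_2$ and the inf over strategies then contradicts \eqref{eq:DPP}. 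You flag the ``strategies generated by constant controls / measurable selection'' issue as an obstacle, but your sketch never actually deploys the responsive-strategy construction where it is indispensable, namely in the sub-solution step. The constant-control shortcut is legitimate only for the super-solution step, where $\underline{\cH}=\min_{u_2}\max_{u_1}$ hands you one fixed $u_2^*$ valid against every $u_1$ and hence against every strategy; your super-solution sketch (extracting an ``almost-constant'' $u_1$ from a near-optimal strategy) is more convoluted than needed but not wrong in spirit.

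A secondary caution on continuity of $V$: the stability bound \eqref{eq:stability-estimate} grows like $\exp\bigl(t\e^{K_2t}\mathcal K_t\bigr)$ with $\mathcal K_t\sim\e^{K_1t}$, so you cannot simply integrate it against $\e^{-\theta t}$ over $(0,\infty)$ to get a uniform modulus. You must split the time integral, using \eqref{eq:stability-estimate} on $[0,T]$ and the uniform bound $0\le I(t)\le\norm{S_o}_{\LL1}+I_o$ from \Cref{th:WellPosedness} on the tail (the paper instead combines pointwise convergence of $I^n$ with an $\LL\infty$ bound and weak$^*$/dominated convergence). This is fixable, but as stated your parenthetical claim glosses over a real divergence.
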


\section{Proofs}
\label{sec:proofs}
This section contains the proofs of the analytic results presented
in \Cref{ssec:Results}.

\subsection{Proof of Theorem~\ref{th:WellPosedness}}
\label{ssec:WellPosedness}

The proof of \Cref{th:WellPosedness} consists in studying the
two equations of \eqref{eq:VaxSystem} separately and then applying
a Fixed Point Theorem.

\subsubsection{The Controlled Partial Differential Equation}

Here, let us consider the Cauchy Problem for the
partial differential equation:
\begin{equation}
  \label{eq:VaxPDE}
  \left\{
    \begin{array}{l@{\qquad}l}
      \p_t S + \p_\xi (g(\xi, u_1(t), u_2(t)) S) = -(f + \alpha (I)) S,
      & t > 0, \, \xi \in (0,1),
      \\
      S(0, \xi) = S_o(\xi),
      & \xi \in (0,1),
    \end{array}
  \right.
\end{equation}
where $S_o \in \LL2\left((0,1); \R\right)$,
$\alpha$ satisfies~\ref{hyp:(alpha)}, $f$ satisfies~\ref{hyp:(F)},
$I \in \Czero(\R^+; \R^+)$,
and $u_1$ and $u_2$ are control functions.
Let us introduce the definition of solution for~\eqref{eq:VaxPDE}.
\begin{definition}
  \label{def:solution-only-PDE}
  We say that the function $S$ is a solution to~\eqref{eq:VaxPDE} with
  initial condition $S_o \in \LL{2}\left((0,1);\R\right)$ if the
  following conditions are satisfied.
  \begin{enumerate}
  \item $S \in \Czero(\R^+; \LL{2}\left((0,1);
      \R\right))$.
      
  \item For all test functions
    $\phi \in \Cc{\infty}(\R \times (0, 1); \R)$, it holds
    \begin{equation}
      \label{eq:WeakSolution_PDE}
      \int_0^{+\infty} \int_0^1
      S (\p_t \phi + g(\xi, u_1(t), u_2(t)) \p_\xi \phi) -
      (f + \alpha(I)) \phi ) \d{\xi} \d{t}
      + \int_0^1 S_o(\xi) \phi(0, \xi) \d{\xi} = 0.
    \end{equation}
  \end{enumerate}
\end{definition}

Here we state the well posedness for system~\eqref{eq:VaxPDE}.
\begin{lemma}
  \label{lmm:WellPosedness1}
  Assume~\ref{hyp:(alpha)}, \ref{hyp:(F)} and~\ref{hyp:(G)}.  Fix
  {$I \in \Czero(\R^+; \R^+)$}, and the control
  functions $u_1 \in \cU_1$ and
  $u_2 \in \cU_2$.  Then, for all
  $S_o \in \LL{2}((0, 1); \R)$, there exists a unique solution $S$
  to~\eqref{eq:VaxPDE}, in the sense of~\Cref{def:solution-only-PDE}.
  Moreover:
  \begin{enumerate}[label=(\roman*)]
  \item if $S_o \geq 0$, then for all $t > 0$, $S(t) \geq 0$ ;

  \item for all $t > 0$, we have that
  \begin{equation}
    \label{eq:WellPosedness-L1}
    \|S(t)\|_{\LL{1}\left((0,1); \R\right)} \leq
    \|S_o\|_{\LL{1}\left((0,1); \R\right)},
  \end{equation}
  and
  \begin{equation}
    \label{eq:WellPosedness-L2}
    \|S(t)\|_{\LL{2}\left((0,1); \R\right)} \leq
    \e^{t\gamma/2} \|S_o\|_{\LL{2}\left((0,1); \R\right)},
  \end{equation}
  where $\gamma$ is defined in~\ref{hyp:(G)}.
\end{enumerate}
\end{lemma}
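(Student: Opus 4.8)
The plan is to construct the solution explicitly via the method of characteristics, exploiting hypothesis \ref{hyp:(G)}(b), which guarantees that the vector field $g(\cdot, u_1(t), u_2(t))$ vanishes at $\xi = 0$ and $\xi = 1$, so that characteristic curves emanating from $(0,1)$ never leave $(0,1)$ and no boundary data are needed. First I would fix $u_1 \in \cU_1$, $u_2 \in \cU_2$ and $I \in \Czero(\R^+;\R^+)$, and for each $(t,\xi) \in \R^+ \times [0,1]$ consider the backward characteristic $s \mapsto X(s; t, \xi)$ solving~\eqref{eq:charact-curves}. Since $\xi \mapsto g(\xi, u_1(t), u_2(t))$ is $\Ck{2}$ (hence Lipschitz) uniformly in $t$ by \ref{hyp:(G)}(a) and (c), and the time dependence enters only measurably and boundedly through $u_1, u_2$, Carathéodory's existence theorem gives a unique absolutely continuous solution $X(\cdot; t, \xi)$ defined on all of $[0,t]$, with values in $[0,1]$ by \ref{hyp:(G)}(b) and the comparison principle for scalar ODEs. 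Then I would \emph{define} $S$ by the formula~\eqref{eq:characteristics-formula-general} (with $\beta$-independent exponential weight coming from $f$, $\alpha(I)$ and $\p_\xi g$), and check: (i) the flow map $\xi \mapsto X(0; t, \xi)$ is a bi-Lipschitz bijection of $[0,1]$ onto itself with Jacobian $\exp\bigl(\int_0^t \p_\xi g(X(s;t,\xi),\dots)\,ds\bigr)$ controlled by \ref{hyp:(G)}(c); (ii) $t \mapsto S(t)$ is continuous into $\LL2$; (iii) $S$ satisfies the weak formulation~\eqref{eq:WeakSolution_PDE} — this last point follows by a change of variables along characteristics, reducing the PDE identity to the fundamental theorem of calculus along each curve, much as in \cite[Theorem~3.2]{Keimer2017} cited in \Cref{rmk:characteristics-representation}.

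For the estimates: positivity in~(i) of the lemma is immediate from~\eqref{eq:characteristics-formula-general}, since $S_o \ge 0$ and the exponential factor is positive. For the $\LL1$ bound~\eqref{eq:WellPosedness-L1}, I would change variables $\eta = X(0;t,\xi)$ in $\int_0^1 S(t,\xi)\,d\xi$; the Jacobian cancels exactly one of the two occurrences of $\exp(\int \p_\xi g)$ in the density, leaving $\int_0^1 S_o(\eta) \exp(-\int_0^t [f + \alpha(I)]\,ds)\,d\eta \le \norm{S_o}_{\LL1}$ because $f \ge 0$ and $\alpha(I) \ge 0$ by \ref{hyp:(F)} and \ref{hyp:(alpha)}. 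Alternatively, and perhaps more cleanly, one derives the $\LL1$ and $\LL2$ bounds directly from the weak formulation: testing (a regularization of) $\operatorname{sgn}(S)$ gives the $\LL1$ contraction, and testing $S$ itself gives, after integrating the flux term by parts and using $g(0,\cdot,\cdot)=g(1,\cdot,\cdot)=0$,
\[
  \frac{d}{dt}\,\tfrac12\norm{S(t)}_{\LL2}^2
  = \int_0^1 \Bigl(\tfrac12 \p_\xi g - f - \alpha(I)\Bigr) S^2 \,d\xi
  \le \tfrac{\gamma}{2}\norm{S(t)}_{\LL2}^2,
\]
whence~\eqref{eq:WellPosedness-L2} by Grönwall. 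I would present whichever route keeps the regularization arguments shortest, noting that $S(t) \in \LL2$ only, so these energy identities must first be justified by approximation (e.g.\ mollifying $S_o$ and passing to the limit using the stability of the characteristic formula).

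Uniqueness is the step I expect to require the most care, precisely because solutions live only in $\LL2$ and the flux $g(\xi,u_1(t),u_2(t))S$ is merely $\LL2$ in $\xi$, so one cannot integrate by parts freely. The plan is the standard duality / renormalization argument: given the difference $S$ of two solutions with zero initial datum, it solves the linear equation weakly; one tests against the solution $\phi$ of the adjoint (backward) transport problem $\p_t \phi + g\,\p_\xi \phi - (f+\alpha(I))\phi = 0$ with terminal datum $\phi(T,\cdot)$ smooth and compactly supported in $(0,1)$, which is well defined and smooth by the $\Ck2$ regularity of $g$ in $\xi$ from \ref{hyp:(G)}(a) together with DiPerna–Lions / classical transport theory, and whose support stays in $(0,1)$ by \ref{hyp:(G)}(b). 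This forces $\int_0^T\!\!\int_0^1 S\,\psi\,d\xi\,dt = 0$ for a dense set of $\psi$, hence $S \equiv 0$. An alternative is to invoke the Lipschitz dependence of the characteristic ODE on $(t,\xi)$ to show that every weak solution must coincide with the one given by~\eqref{eq:characteristics-formula-general}; I would use whichever is cleaner given the machinery already set up for the existence part. The main obstacle throughout is bookkeeping the two competing exponential factors (flow Jacobian versus the $\p_\xi g$ term in the weight) so that the $\LL1$ estimate comes out with constant exactly $1$ and the $\LL2$ estimate with exactly $\e^{t\gamma/2}$, as stated.
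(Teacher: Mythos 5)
Your proposal is correct and follows essentially the same route as the paper: existence via the method of characteristics (the paper realizes this by mollifying $S_o$, solving classically, and passing to a weak* limit, but it too relies on the representation formula~\eqref{eq:characteristics-formula-general} for positivity and the $\LL1$ bound), the $\LL2$ bound via the energy estimate on smooth approximations with Gr\"onwall, the $\LL1$ bound via the change of variables $y = X(0;t,\xi)$, and uniqueness via duality with the adjoint transport equation. The only cosmetic difference is that the paper's adjoint problem carries the source $\psi$ rather than a terminal datum, which is an equivalent formulation of the same argument.
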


\begin{proof}
  For readers' convenience, set the flux function
  \begin{equation}
    \label{eq:G-flux}
    G(\xi, t)\coloneqq g(\xi, u_1(t), u_2(t)).
  \end{equation}

  We split the proof into several parts.

  \textbf{Existence of $\LL\infty$ solutions.}
  Let $(S_o^{(n)})_n$ be a sequence of functions in
  $\Cc{\infty}((0, 1); \R)$ such that
  \begin{equation*}
    \lim_{n \to +\infty} \norm{S_o^{(n)} - S_o}_{\LL2\left((0,1); \R\right)} = 0.
  \end{equation*}
  For all $n \in \N$, the Cauchy problem \eqref{eq:VaxPDE},
  with initial condition $S_o^{(n)}$, admits a unique classical solution
  $S_n$, which can be obtained using the method of characteristics,
  for instance. Here~\ref{hyp:(G)}, together with the regularity of the controls
  $u_1$ and $u_2$, permits to have globally defined characteristics curves.
  Since, for every $n \in \N$, $S_n$ is a classical solution,
  then $S_n$ satisfies condition~\eqref{eq:WeakSolution_PDE}
  for every
  test function $\phi$.

  Multiplying the first equation in~\eqref{eq:VaxPDE} by $S_n$ and
  integrating, we get that, for all $t > 0$,
  \begin{align*}
    \frac{\d{}}{\d{t}} \|S_n(t)\|_{\LL{2}\left((0,1); \R\right)}^2
    & = 2 \int_0^1 S_n  \, \partial_t S_n \dd\xi
      = -2 \int_0^1 \left(f + \alpha(I)\right)S_n^2 \dd\xi
      -2 \int_0^1 \partial_\xi \left(G(\xi, t) S_n\right) S_n \dd\xi
    \\
    & \le
      -2 \int_0^1 \partial_\xi \left(G(\xi, t) S_n\right) S_n \dd\xi
      = 2 \int_0^1 G(\xi, t) S_n \partial_\xi S_n \dd\xi
      = \int_0^1 G(\xi, t) \partial_\xi S_n^2 \dd\xi
    \\
    & = - \int_0^1 \p_\xi G(\xi, t) S_n^2 \d{\xi}
    \leq \gamma \|S_n(t)\|_{\LL{2}\left((0,1); \R\right)}^2,
  \end{align*}
  where we used~\ref{hyp:(alpha)}, \ref{hyp:(F)}, and~\ref{hyp:(G)}.
  Here the constant $\gamma$ is defined in~\ref{hyp:(G)}.
  Gronwall Lemma ensures that, for every $n \in \N$ and $t > 0$,
  \begin{equation}
    \label{eq:S_n-L2}
    \|S_n(t)\|_{\LL{2}\left((0,1); \R\right)}
    \leq \e^{t \gamma/2} \|S_o^{(n)}\|_{\LL{2}\left((0,1); \R\right)} .
  \end{equation}
  We deduce that, for all $T > 0$, $(S_n)_n$ is bounded in
  $\LL{\infty}((0, T); \LL{2}\left((0,1); \R\right))$.
  Therefore, for all $T > 0$,
  there exists $S_T \in \LL{\infty}((0, T); \LL{2}\left((0,1); \R\right))$
  such that a subsequence of $(S_n)_n$ converges to $S_T$
  in the $\LL{\infty}$-weak* sense.
  By a standard diagonal process, we can construct $S$ such that
  there exists a subsequence of $(S_n)_n$ that weakly* converges to $S$
  in $\LL{\infty}((0, \infty); \LL{2}\left((0,1); \R\right))$.
  By linearity, $S$ satisfies~\eqref{eq:WeakSolution_PDE}
  for every test function.
  Passing to the limit as $n \to +\infty$ in~\eqref{eq:S_n-L2},
  we deduce~\eqref{eq:WellPosedness-L2}.

  \textbf{Uniqueness.} By linearity, it is sufficient to prove that if
  $S_o \equiv 0$, then any weak solution is also zero.
  We follow here a similar technique as in~\cite{zbMATH05837938}.
  Fix $\psi \in \Cc{\infty}(\R^+ \times (0, 1); \R)$.
  We claim that
  \begin{equation*}
    \int_0^{+\infty} \in,t_0^1 S(t, \xi) \psi(t, \xi) \d{\xi} \d{t} = 0
  \end{equation*}
  so that $S(t, \xi) = 0$ for a.e.~$(t, \xi)$ by the arbitrariness of $\psi$;
  hence uniqueness holds.

  To this aim, there exists
  $\phi \in \Cc{\infty}(\R^+ \times (0, 1); \R)$ such that
  \begin{equation}
    \label{eq:adjoint}
    \p_t \phi + G(\xi, t) \p_\xi \phi - (f + \alpha (I))\phi = \psi.
  \end{equation}
  Therefore, using~\eqref{eq:adjoint} and~\eqref{eq:WeakSolution_PDE},
  we deduce that
  \begin{align*}
    \int_0^{+\infty} \int_0^1 S(t, \xi) \psi(t, \xi) \d{\xi} \d{t}
    & =
      \int_0^{+\infty} \int_0^1
      S (\p_t \phi + G(\xi, t) \p_\xi \phi) -
      (f + \alpha (I)) \phi ) \d{\xi} \d{t}
    \\
    & = - \int_0^1 S_o(\xi) \phi(0, \xi) \dd\xi = 0,
  \end{align*}
  since $S_o \equiv 0$. Thus uniqueness holds.
  \smallskip

  \textbf{Positivity.} The method of characteristics implies that the
  unique solution to the Cauchy problem~\eqref{eq:VaxPDE} can be written
  in the form
  \begin{equation}
    \label{eq:characteristics-formula}
    S(t, \xi) = S_o (X(0; t, \xi)) \exp \left(-\int_{0}^{t}
      \left[f(X(s; t, \xi)) + \alpha(I\left(s\right))
        + \p_\xi G(X(s; t, \xi), s) \right]\dd{s} \right),
  \end{equation}
  where $X(s; t, \xi)$ solves the Cauchy problem
  \begin{equation*}
    \left\{
      \begin{array}{l}
        X'(s; t, \xi) = G(X(s; t, \xi), s),
        \\
        X(t; t, \xi) = \xi.
      \end{array}
    \right.
  \end{equation*}
  In the case $S_o(\xi) \ge 0$ for a.e.~$\xi \in (0,1)$,
  formula~\eqref{eq:characteristics-formula} implies that, for every $t>0$,
  $S(t, \xi) \ge 0$ for a.e.~$\xi \in (0, 1)$.
  \smallskip

  \textbf{$\LL1$ estimate.}
  For all $t > 0$, by assumptions~\ref{hyp:(alpha)} and~\ref{hyp:(F)}, formula~\eqref{eq:characteristics-formula}
  and the change of variable $y = X(0; t, \xi)$, we get
  \begin{align*}
    &
      \|S(t)\|_{\LL{1}\left((0, 1); \R\right)}
    \\
    = & \int_0^1 |S_o(X(0; t, \xi))| \exp \left(-\int_{0}^{t}
      \left[f(X(s; t, \xi)) + \alpha\left(I\left(s\right)\right)
      + \p_1 G(X(s, t, \xi), s) \right]\d{s} \right) \d{\xi}
    \\
    = & \int_{0}^1 |S_o(y)| \exp \left(-\int_{0}^{t} \left[f(X(s; 0, y))
      + \alpha\left(I\left(s\right)\right)\right]
      \d{s}\right)\d{y}
      \leq \|S_o\|_{\LL{1}\left((0,1); \R\right)},
  \end{align*}
  proving~\eqref{eq:WellPosedness-L1}.
  \smallskip

  \textbf{Continuity in time.}
  We exploit an idea from~\cite[Lemma~2.11]{Keimer2017}.
  Fix $t_1 \ge 0$ and consider $t_2 \ge 0$. Without loss of generality we
  assume that $t_1 < t_2$. By the method of characteristics we have:
  \begin{displaymath}
    S(t_2, \xi) - S(t_1, \xi)
    =
    S\left(t_1, X(t_1; t_2, \xi)\right)\mathcal{E}(t_1, t_2, \xi)
    - S(t_1, \xi),
  \end{displaymath}
  where
  \begin{equation}
    \label{eq:exp}
    \mathcal{E}(t_1, t_2, \xi) =
    \exp \left(-\int_{t_1}^{t_2}
      \left[f(X(s; t_2, \xi)) + \alpha(I\left(s\right))
        + \p_\xi G(X(s; t_2, \xi), s) \right]\d{s} \right) .
  \end{equation}
  In particular,
  adding and subtracting $S(t_1, \xi) \, \mathcal{E}(t_1, t_2, \xi)$,
  it yields
  \begin{equation}
    \label{eq:t1}
    \begin{split}
      \abs{S(t_2, \xi) - S(t_1, \xi)}^2 
      & = \abs{
        \left(
          S\left(t_1, X(t_1; t_2, \xi)\right) - S(t_1, \xi)
        \right) \mathcal{E}(t_1, t_2, \xi)
        + S(t_1,\xi) \left(\mathcal{E}(t_1, t_2, \xi) -1\right)
      }^2
      \\
      & \le \abs{S\left(t_1, X(t_1; t_2, \xi)\right) - S(t_1, \xi)}^2
      \left(\mathcal{E}(t_1, t_2, \xi)\right)^2
      \\
      & \quad + \left(S(t_1,\xi)\right)^2 \abs{\mathcal{E}(t_1, t_2, \xi) -1}^2 
      \\
      & \quad 
      + 2 \abs{S\left(t_1, X(t_1; t_2, \xi)\right) - S(t_1, \xi)} \cdot
      \abs{S(t_1,\xi)} \, \mathcal{E}(t_1, t_2, \xi) \,
      \abs{\mathcal{E}(t_1, t_2, \xi) -1}.
    \end{split}
  \end{equation}
  By the positivity of $f$ and $\alpha$, and exploiting also~\ref{hyp:(G)},
  we have that
  \begin{equation}
    \label{eq:t1_bound}
    \begin{split}
      & \int_0^1 \abs{S\left(t_1, X(t_1; t_2, \xi)\right) - S(t_1, \xi)}^2
      \left(\mathcal{E}(t_1, t_2, \xi)\right)^2 \dd\xi
      \\
      \le
      & \e^{2 \, \gamma (t_2-t_1)}
      \int_0^1  
      \abs{S\left(t_1, X(t_1; t_2, \xi)\right) - S(t_1, \xi)}^2
      \dd{\xi},
    \end{split}
  \end{equation}
  and the right hand side goes to zero as $t_2 \to t_1$
  by a similar reasoning as in~\cite[Lemma~4.3]{Brezis}.

  The definition~\eqref{eq:exp} of $\mathcal{E}$, together
  with~\ref{hyp:(alpha)}, \ref{hyp:(F)} and~\ref{hyp:(G)}, yields
  \begin{equation}
    \label{eq:E-1infty}
    \norm{\mathcal{E}(t_1, t_2, \cdot) - 1}_{\LL\infty((0,1); \R)}
    \le (t_2-t_1)
    \e^{\gamma(t_2-t_1)}\left(\norm{f}_{\LL\infty((0,1);\R)}
      + \Lip(\alpha) \norm{I}_{\LL\infty([t_1,t_2]; \R)} +  \gamma\right),
  \end{equation}
  so that
  \begin{equation}
    \label{eq:t2_bound}
    \begin{split}
      \int_0^1 \left(S(t_1,\xi)\right)^2
      \abs{\mathcal{E}(t_1, t_2, \xi) -1}^2\dd\xi
      &
      \le \norm{S(t_1)}^2_{\LL2((0,1); \R)}
      \norm{\mathcal{E}(t_1, t_2, \cdot) -1}^2_{\LL\infty((0,1); \R)}
      \\
      & \le (t_2-t_1)^2
      \e^{2\gamma(t_2-t_1)} \norm{S(t_1)}^2_{\LL2((0,1); \R)}
      \\
      & \quad \times
      \left(\norm{f}_{\LL\infty((0,1);\R)} 
        + \Lip(\alpha) \norm{I}_{\LL\infty([t_1,t_2]; \R)} +  \gamma\right)^2
    \end{split}
  \end{equation}
  and the right hand side goes to zero as $t_2 \to t_1$,
  since $\norm{S(t_1)}_{\LL2((0,1); \R)}$
  is bounded by~\eqref{eq:WellPosedness-L2}.

  Finally, using Young's inequality, the positivity of $f$ and $\alpha$,
  and exploiting also~\ref{hyp:(G)}, we have
  \begin{equation}
    \label{eq:t3_bound}
    \begin{split}
      & 2 \int_0^1 \abs{S\left(t_1, X(t_1; t_2, \xi)\right) - S(t_1, \xi)} \cdot 
      \abs{S(t_1,\xi)} \, \mathcal{E}(t_1, t_2, \xi) \,
      \abs{\mathcal{E}(t_1, t_2, \xi) -1}\dd\xi
      \\
      \le &
      \norm{\mathcal{E}(t_1, t_2, \cdot) - 1}_{\LL\infty((0,1); \R)}
      \left(
        \int_0^1 
        \abs{S\left(t_1, X(t_1; t_2, \xi)\right) - S(t_1, \xi)}^2 \dd\xi
        +
        \int_0^1 \abs{S(t_1,\xi)}^2 \left(\mathcal{E}(t_1, t_2, \xi)\right)^2
        \dd{\xi}
      \right)
      \\
      \le & 
      \norm{\mathcal{E}(t_1, t_2, \cdot) - 1}_{\LL\infty((0,1); \R)}
      \left(
        \int_0^1 
        \abs{S\left(t_1, X(t_1; t_2, \xi)\right) - S(t_1, \xi)}^2 \dd\xi
        +
        \e^{2\, \gamma (t_2-t_1)} \norm{S(t_1)}^2_ {\LL2((0,1);\R)}
      \right),
    \end{split}
  \end{equation}
  and the right hand side goes to zero as $t_2 \to t_1$
  by~\eqref{eq:E-1infty}, \cite[Lemma~4.3]{Brezis}, and the boundedness
  of $\norm{S(t_1)}_{\LL2((0,1); \R)}$ due to~\eqref{eq:WellPosedness-L2}.

  Integrating~\eqref{eq:t1} over $\xi \in [0,1]$ and
  using~\eqref{eq:t1_bound}, \eqref{eq:t2_bound}, and~\eqref{eq:t3_bound}
  we deduce that
  \begin{equation*}
    \lim_{t_2 \to t_1} \norm{S(t_2) - S(t_1)}_{\LL2((0,1);\R)} = 0,
  \end{equation*}
  proving that
  $S \in \Czero(\R^+; \LL2((0,1);\R))$.
\end{proof}

We provide now a stability result for~\eqref{eq:VaxPDE}. For simplicity,
we call ``weak solution to~\eqref{eq:VaxPDE} with data $(S_o,I)$'' a
weak solution to~\eqref{eq:VaxPDE}  with initial datum $S_o$ and
given function $I$.
\begin{lemma}
  \label{lmm:WellPosedness2}
  Assume~\ref{hyp:(alpha)}, \ref{hyp:(F)} and~\ref{hyp:(G)}.
  Fix $u_1 \in \cU_1$, $u_2 \in \cU_2$, 
  $I, \overline{I} \in\Czero(\R^+; \R^+)$ and 
  $S_o, \overline{S}_o \in \LL{2}((0, 1); \R)$. Denote by $S$,
  resp.~$\overline{S}$, the weak solution to~\eqref{eq:VaxPDE} with
  data $(S_o, I)$, resp.~$(\overline{S}_o, \overline{I})$. Then for
  all $t > 0$,
  \begin{equation}
    \label{eq:WellPosedness2a}
    \norm{S(t) - \overline{S}(t)}_{\LL{1}\left((0,1); \R\right)}
    \leq \norm{S_o - \overline{S}_o}_{\LL{1}\left((0,1);\R\right)}
    + \Lip\left(\alpha\right) \norm{\overline{S}_o}_{\LL{1}\left((0,1);\R\right)}
    \norm{I - \overline I}_{\LL1\left((0,t); \R\right)}
  \end{equation}
  and
  \begin{equation}
    \label{eq:WellPosedness2b}
    \norm{S(t) - \overline{S}(t)}_{\LL{2}((0,1);\R)} \le \e^{t \gamma /2}
     \left(
    \norm{S_o - \overline S_o}_{\LL2\left((0,1); \R\right)}
    + \Lip(\alpha) \sqrt{t}
    \norm{\overline S_o}_{\LL2\left((0,1); \R\right)}
    \norm{I - \overline I}_{\LL2\left((0,t); \R\right)}  \right),
  \end{equation}
  where $\gamma$ is defined in~\ref{hyp:(G)}.

\end{lemma}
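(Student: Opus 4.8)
The plan is to base everything on the characteristics representation \eqref{eq:characteristics-formula}, which applies to both $S$ and $\overline{S}$ with the \emph{same} curves $X(\cdot\,;t,\xi)$, since these are determined by $g$ and the controls $u_1,u_2$ alone, not by the source term. Writing
\[
\mathcal{E}_I(t,\xi) \coloneqq \exp\Bigl(-\int_0^t \bigl[f(X(s;t,\xi)) + \alpha(I(s)) + \p_\xi G(X(s;t,\xi),s)\bigr]\dd s\Bigr),
\]
and $\mathcal{E}_{\overline I}$ likewise with $\overline I$ in place of $I$, one has the identity
\[
S(t,\xi) - \overline S(t,\xi) = \bigl(S_o(X(0;t,\xi)) - \overline S_o(X(0;t,\xi))\bigr)\,\mathcal{E}_I(t,\xi) + \overline S_o(X(0;t,\xi))\,\bigl(\mathcal{E}_I(t,\xi) - \mathcal{E}_{\overline I}(t,\xi)\bigr).
\]

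For \eqref{eq:WellPosedness2a} I would integrate this identity in $\xi$ and perform the change of variables $y = X(0;t,\xi)$, which --- exactly as in the $\LL1$ estimate inside the proof of \Cref{lmm:WellPosedness1} --- produces a Jacobian cancelling the factor $\exp(-\int_0^t \p_\xi G\,\dd s)$ present in the exponentials, leaving only $\exp(-\int_0^t[f+\alpha]\,\dd s)\le 1$ thanks to \ref{hyp:(alpha)}--\ref{hyp:(F)}. The first term is then bounded by $\norm{S_o - \overline S_o}_{\LL1((0,1);\R)}$. For the second term, the elementary inequality $\abs{\e^{-a}-\e^{-b}}\le\abs{a-b}$ for $a,b\ge 0$ combined with $\Lip(\alpha)$ gives $\abs{\mathcal{E}_I(t,\xi) - \mathcal{E}_{\overline I}(t,\xi)} \le \exp\bigl(-\int_0^t[f(X)+\p_\xi G]\,\dd s\bigr)\,\Lip(\alpha)\,\norm{I-\overline I}_{\LL1((0,t);\R)}$, and the same change of variables turns this into $\Lip(\alpha)\,\norm{\overline S_o}_{\LL1((0,1);\R)}\,\norm{I-\overline I}_{\LL1((0,t);\R)}$, which is exactly \eqref{eq:WellPosedness2a}.

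For \eqref{eq:WellPosedness2b} the change of variables no longer helps because of the square, so I would switch to an energy argument. Setting $w \coloneqq S - \overline S$, one checks that $w$ is a weak solution of $\p_t w + \p_\xi(Gw) = -(f+\alpha(I))w + (\alpha(\overline I) - \alpha(I))\overline S$. Multiplying by $w$, integrating over $(0,1)$, integrating the transport term by parts and using $G(0,t)=G(1,t)=0$ and $\abs{\p_\xi G}\le\gamma$ from \ref{hyp:(G)}, discarding the nonpositive term $-\int_0^1(f+\alpha(I))w^2\dd\xi$, and estimating the source term by Cauchy--Schwarz, one obtains
\[
\frac{\dd}{\dd t}\norm{w(t)}_{\LL2((0,1);\R)} \le \frac{\gamma}{2}\,\norm{w(t)}_{\LL2((0,1);\R)} + \Lip(\alpha)\,\abs{I(t)-\overline I(t)}\,\norm{\overline S(t)}_{\LL2((0,1);\R)}.
\]
Inserting $\norm{\overline S(t)}_{\LL2((0,1);\R)} \le \e^{t\gamma/2}\norm{\overline S_o}_{\LL2((0,1);\R)}$ from \eqref{eq:WellPosedness-L2}, multiplying by the integrating factor $\e^{-t\gamma/2}$ and integrating on $(0,t)$ gives $\norm{w(t)}_{\LL2((0,1);\R)} \le \e^{t\gamma/2}\bigl(\norm{S_o-\overline S_o}_{\LL2((0,1);\R)} + \Lip(\alpha)\norm{\overline S_o}_{\LL2((0,1);\R)}\norm{I-\overline I}_{\LL1((0,t);\R)}\bigr)$, and a final Cauchy--Schwarz inequality in time, $\norm{I-\overline I}_{\LL1((0,t);\R)} \le \sqrt{t}\,\norm{I-\overline I}_{\LL2((0,t);\R)}$, yields \eqref{eq:WellPosedness2b}.

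The one point requiring care is the rigor of the energy identity: since $w \in \Czero(\R^+;\LL2)$ only, $\p_t w$ need not be a function and $\frac{\dd}{\dd t}\norm{w(t)}_{\LL2((0,1);\R)}^2$ is a priori only formal. As in the proof of \Cref{lmm:WellPosedness1}, I would carry out the computation on the smooth approximations $S_n,\overline S_n$ (classical solutions with data $(S_o^{(n)},I)$ and $(\overline S_o^{(n)},\overline I)$) and then pass to the limit, using $S_n\to S$ and $\overline S_n\to\overline S$ in $\Czero_{\mathrm{loc}}(\R^+;\LL2)$; the division by $\norm{w(t)}_{\LL2((0,1);\R)}$ on the (possibly nonempty) set where it vanishes is dealt with by the standard $\sqrt{\norm{w}_{\LL2}^2+\eps^2}$ regularization followed by $\eps\to 0$. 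Everything else is a routine Gronwall estimate.
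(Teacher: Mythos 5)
Your proposal is correct, and it is worth recording how it relates to the paper's argument. For \eqref{eq:WellPosedness2a} your decomposition of $S-\overline S$ into $(S_o-\overline S_o)(X(0;t,\xi))\,\mathcal E_I$ plus $\overline S_o(X(0;t,\xi))\,(\mathcal E_I-\mathcal E_{\overline I})$ is exactly the paper's splitting in disguise: the paper introduces the intermediate solution $\Sigma$ with data $(\overline S_o,I)$, and your two terms are precisely $S-\Sigma$ and $\Sigma-\overline S$ written out along characteristics. The only cosmetic difference is that the paper bounds $\delta=\Sigma-\overline S$ via a Duhamel representation of the source-driven problem, whereas you factor out the common exponential and use $\abs{\e^{-a}-\e^{-b}}\le\abs{a-b}$ for $a,b\ge 0$ (legitimate here because only the $\int_0^t\alpha$ parts differ, and those are nonnegative by \ref{hyp:(alpha)}); the change of variables and the resulting constant are identical. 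For \eqref{eq:WellPosedness2b} you genuinely diverge: the paper stays with the pointwise characteristics formula for $\delta$, applies H\"older in the Duhamel time integral, and changes variables, which is where $\sqrt t$ and $\e^{t\gamma/2}$ arise; you instead run an $\LL2$ energy estimate on $w=S-\overline S$ directly and close with an integrating factor. Your route avoids the intermediate solution and the inhomogeneous representation formula altogether, at the price of justifying the energy identity for solutions that are only $\Czero(\R^+;\LL2\left((0,1);\R\right))$ --- a point you correctly flag and resolve by working on the smooth approximants (for which $\norm{S_n(t)-S(t)}_{\LL2\left((0,1);\R\right)}\le\e^{t\gamma/2}\norm{S_o^{(n)}-S_o}_{\LL2\left((0,1);\R\right)}$ gives the needed locally uniform convergence) and by the standard regularization when dividing by $\norm{w}_{\LL2\left((0,1);\R\right)}$. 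Both approaches produce the same constants, so either is acceptable.
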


\begin{proof}
  Let $\Sigma$ be the unique weak solution to~\eqref{eq:VaxPDE}
  with data $(\overline{S}_o, I)$.
  Since $S - \Sigma$ solves~\eqref{eq:VaxPDE} with initial data
  $S_o - \overline{S}_o$, we
  deduce from~\Cref{lmm:WellPosedness1} that, for all $t > 0$,
  \begin{equation}
    \label{eq:1}
    \|S(t) - \Sigma(t)\|_{\LL{1}\left((0,1); \R\right)}
    \leq \|S_o - \overline{S}_o\|_{\LL{1}\left((0,1); \R\right)}
  \end{equation}
  and
  \begin{equation}
    \label{eq:3}
    \|S(t) - \Sigma(t)\|_{\LL{2}\left((0,1);\R\right)}
    \leq \e^{t\gamma/2} \|S_o - \overline{S}_o\|_{\LL{2}\left((0,1); \R\right)}.
  \end{equation}
  The function $\delta \coloneqq \Sigma-\overline{S}$ is the unique weak solution to
  \begin{equation*}
    \left\{
      \begin{aligned}
        \p_t \delta + \p_\xi (G(\xi, t) \delta)
        & = -(f + \alpha (I)) \delta - \left(\alpha(I) - \alpha(\overline{I})
        \right) \overline{S}
        \\
        \delta(0, \xi) & = 0,
      \end{aligned}
    \right.
  \end{equation*}
  and therefore, can be written using characteristics as
  \begin{equation}
    \label{eq:delta-expression}
    \begin{split}
      \delta(t, \xi)
      & = - \int_0^t (\alpha(I(\tau)) - \alpha(\overline{I}(\tau)))
      \, \overline{S}(\tau, X(\tau; t, \xi))
      \\
      & \qquad \times \exp\left( -\int_{\tau}^t
        \left[f(X(s; t, \xi)) + \alpha(I(s))+ \p_1 G(X(s; t, \xi), s) \right]
        \dd{s} \right) \dd{\tau},
    \end{split}
  \end{equation}
  where $X = X(\tau; t, \xi)$ and $G$ are defined as in the proof
  of~\Cref{lmm:WellPosedness1}.
  Using~\ref{hyp:(F)} and~\ref{hyp:(alpha)}, the
  change of variable $y = X(\tau; t, \xi)$, and
  formula~\eqref{eq:WellPosedness-L1},
  for all $t > 0$, we obtain that
  \begin{align*}
    \|\delta(t)\|_{\LL{1}\left((0,1); \R\right)}
    & \leq \int_0^1 \int_0^t |\alpha (I(\tau)) - \alpha (\overline{I}(\tau))| \cdot
      |\overline{S}(\tau, X(\tau, t, \xi))|
      \exp\left( -\int_{\tau}^t
      \p_1 G(X(s; t, \xi), s)
      \dd{s} \right) \dd{\tau} \dd\xi
    \\
    & \le \Lip(\alpha)
      \int_0^t |I(\tau) - \overline{I}(\tau)|
      \int_0^1 |\overline{S}(\tau, y)|
      \d{y} \d{\tau}
    \\
    & = \Lip(\alpha)
      \int_0^t |I(\tau) - \overline{I}(\tau)|\,
      \|\overline{S}(\tau)\|_{\LL1\left((0,1); \R\right)}
      \dd{\tau}
    \\
    & \le \Lip(\alpha) \|\overline{S}_o\|_{\LL1\left((0,1); \R\right)}
      \int_0^t |I(\tau) - \overline{I}(\tau)|\, \dd{\tau}.
  \end{align*}
  Hence, using~\eqref{eq:1}, we conclude that, for all $t > 0$,
  \begin{align*}
    \norm{S(t) - \overline S(t)}_{\LL1\left((0,1); \R\right)}
    & \le \norm{S(t) - \Sigma(t)}_{\LL1\left((0,1); \R\right)}
      + \norm{\Sigma(t) - \overline S(t)}_{\LL1\left((0,1); \R\right)}
    \\
    & \le \norm{S_o - \overline S_o}_{\LL1\left((0,1); \R\right)}
      + \Lip(\alpha) \|\overline{S}_o\|_{\LL1\left((0,1); \R\right)}
      \int_0^t |I(\tau) - \overline{I}(\tau)|\, \dd{\tau},
  \end{align*}
  proving~\eqref{eq:WellPosedness2a}.

  Moreover, using~\ref{hyp:(alpha)} and~\ref{hyp:(F)}, from~\eqref{eq:delta-expression} we deduce that
  \begin{align*}
    |\delta(t, \xi)|
    & \le \int_0^t \abs{\alpha\left(I(\tau)\right)
      - \alpha\left(\overline I(\tau)\right)} \cdot
      \abs{\overline S\left(\tau, X(\tau; t, \xi)\right)}
      \exp\left(-\int_{\tau}^t \p_1 G\left(X(s; t, \xi), s\right) \dd s \right)
      \dd \tau
    \\
    & \le \Lip(\alpha) \int_0^t \abs{I(\tau) - \overline I(\tau)} \cdot
      \abs{\overline S\left(\tau, X(\tau; t, \xi)\right)}
      \exp\left(-\int_{\tau}^t \p_1 G\left(X(s; t, \xi), s\right) \dd s \right)
      \dd \tau,
  \end{align*}
  so that, by Hölder inequality,
  \begin{equation*}
    |\delta(t, \xi)|^2 \leq \left(\Lip(\alpha)\right)^2 t \int_0^t |I(\tau) -
    \overline{I}(\tau)|^2 \cdot |\overline{S}(\tau, X(\tau; t, \xi))|^2
    \exp\left( -\int_{\tau}^t 2 \p_1 G(X(s; t,
      \xi), s)\dd{s} \right) \dd{\tau}.
  \end{equation*}
  Integrating over $\xi \in [0,1]$
  and using the change of variable $y = X(\tau; t, \xi)$
  and~\eqref{eq:WellPosedness-L2}, we obtain that,
  for all $t > 0$,
  \begin{align*}
    \|\delta(t)\|_{\LL{2}\left((0,1); \R\right)}^2
    & \leq \left(\Lip(\alpha)\right)^2 t \int_0^t |I(\tau) -
      \overline{I}(\tau)|^2 \int_0^1 |\overline{S}(\tau, y)|^2
      \e^{(t-\tau)\gamma} \d{y} \d{\tau}
    \\
    & \le \left(\Lip(\alpha)\right)^2 t \int_0^t |I(\tau) -
      \overline{I}(\tau)|^2 \e^{(t-\tau)\gamma}
      \norm{\overline S(\tau)}_{\LL2\left([0,1]; \R\right)}^2 \d{\tau}
    \\
    & \le \left(\Lip(\alpha)\right)^2 t
      \norm{\overline S_o}_{\LL2\left((0,1); \R\right)}^2 \int_0^t |I(\tau) -
      \overline{I}(\tau)|^2 \e^{\tau \, \gamma}
       \d{\tau}
    \\
    & \le \left(\Lip(\alpha)\right)^2 t
      \norm{\overline S_o}_{\LL2\left((0,1); \R\right)}^2
      \e^{t \, \gamma} \norm{I - \overline I}_{\LL2\left((0,t); \R\right)}^2.
  \end{align*}
  Therefore, for all $t > 0$, using~\eqref{eq:3}, we conclude that
  \begin{align*}
    \norm{S(t) - \overline S(t)}_{\LL2\left((0,1); \R\right)}
    & \le \norm{S(t) - \Sigma(t)}_{\LL2\left((0,1); \R\right)}
      + \norm{\Sigma(t) - \overline S(t)}_{\LL2\left((0,1); \R\right)}
    \\
    & \le \e^{t \gamma /2} \norm{S_o \!-\! \overline S_o}
      _{\LL2\left((0,1); \R\right)}
      \!+ \left(\Lip(\alpha)\right) \sqrt{t}
      \norm{\overline S_o}_{\LL2\left((0,1); \R\right)}
      \e^{t\gamma/2} \norm{I - \overline I}_{\LL2\left((0,t); \R\right)},
  \end{align*}
  proving~\eqref{eq:WellPosedness2b}.
\end{proof}

\begin{remark}
  Note that the estimates~\eqref{eq:WellPosedness2a}
  and~\eqref{eq:WellPosedness2b} in the statement of~\Cref{lmm:WellPosedness2}
  are symmetric with respect to the initial data $S_o$ or $\overline S_o$.
  Hence~\eqref{eq:WellPosedness2a} can be replace by
  \begin{align*}
    \norm{S(t) - \overline{S}(t)}_{\LL{1}\left((0,1); \R\right)}
    & \leq \norm{S_o - \overline{S}_o}_{\LL{1}\left((0,1);\R\right)}
    \\
    & \quad
      + \Lip\left(\alpha\right) \min\left\{
      \norm{{S}_o}_{\LL{1}\left((0,1);\R\right)},
      \norm{\overline{S}_o}_{\LL{1}\left((0,1);\R\right)}\right\}
      \norm{I - \overline I}_{\LL1\left((0,t); \R\right)}
  \end{align*}
  while~\eqref{eq:WellPosedness2b} can be replaced by
  \begin{align*}
    \norm{S(t) - \overline{S}(t)}_{\LL{2}((0,1);\R)}
    & \le \e^{t \gamma /2}
      \norm{S_o - \overline S_o}_{\LL2\left((0,1); \R\right)}
    \\
    & \quad
      + \e^{t \gamma /2}\Lip(\alpha) \sqrt{t}
      \min\left\{\norm{S_o}_{\LL2\left((0,1); \R\right)},
      \norm{\overline S_o}_{\LL2\left((0,1); \R\right)}
      \right\}
      \norm{I - \overline I}_{\LL2\left((0,t); \R\right)}.
  \end{align*}
\end{remark}

\subsubsection{The Ordinary Differential Equation}

In this part we recall classical results about the Cauchy problem
\begin{equation}
  \label{eq:VaxODE}
  \left\{
    \begin{array}{l}
      \displaystyle
      \dot I(t) = -\beta I(t) + \alpha (I(t)) \int_0^1 S(t, \xi) \d{\xi},
      \\
      I(0) = I_o,
    \end{array}
  \right.
\end{equation}
where $\beta>0$ and $I_o \in \R$.

\begin{lemma}
  \label{lmm:WellPosedness3}
  Assume~\ref{hyp:(alpha)}.
  Let $S \in \LL{\infty}((0, \infty); \LL{2}((0, 1); \R))$.
  Then, for all $I_o \in \R$,
  the Cauchy problem~\eqref{eq:VaxODE}
  admits a unique solution $I \in \Czero(\R^+; \R)$. Moreover,
  \begin{enumerate}[label=(\roman*), ref=\textsl{(\roman*)}]
    \item \label{it:ODE-i} if $I_o \geq 0$, then $I(t) \geq 0$ for all $t > 0$;
    \smallskip

    \item \label{it:ODE-ii} for all $t > 0$,
    \begin{equation}
        \label{eq:WellPosedness3}
        |I(t)| \leq |I_o| \e^{K_t t},
    \end{equation}
    where
    $K_t \coloneqq \beta + \Lip(\alpha) \norm{S}_{\LL\infty((0, t); \LL1((0,1);\R))}$.
  \end{enumerate}
\end{lemma}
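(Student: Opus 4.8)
The plan is to rewrite~\eqref{eq:VaxODE} as a scalar Carathéodory ODE with globally Lipschitz, linearly growing right-hand side, and then read off the three assertions. Set $b(t) \coloneqq \int_0^1 S(t, \xi) \dd\xi$. Since $(0,1)$ has unit measure, the Cauchy--Schwarz inequality gives $\abs{b(t)} \le \norm{S(t)}_{\LL1((0,1);\R)} \le \norm{S(t)}_{\LL2((0,1);\R)}$ for a.e.~$t$, so $b \in \LL\infty(\R^+; \R)$ and, for each $t > 0$, $\norm{b}_{\LL\infty((0,t);\R)} \le \norm{S}_{\LL\infty((0,t); \LL1((0,1);\R))}$. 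The map $F(\tau, y) \coloneqq -\beta y + \alpha(y)\, b(\tau)$ is then measurable in $\tau$ and, by~\ref{hyp:(alpha)} (in particular $\abs{\alpha(y)} \le \Lip(\alpha)\abs{y}$, since $\alpha(0)=0$), globally Lipschitz in $y$ uniformly in $\tau$ with constant $\beta + \Lip(\alpha)\norm{b}_{\LL\infty}$, and of linear growth $\abs{F(\tau, y)} \le \bigl(\beta + \Lip(\alpha)\norm{b}_{\LL\infty}\bigr)\abs{y}$.

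For existence and uniqueness I would argue classically: on any interval $[0,T]$ the operator $J \mapsto \bigl(t \mapsto I_o + \int_0^t F(\tau, J(\tau))\dd\tau\bigr)$ is a contraction of $\Czero([0,T];\R)$ once this space is endowed with the weighted norm $\norm{J}_\lambda \coloneqq \sup_{t\in[0,T]}\e^{-\lambda t}\abs{J(t)}$ for $\lambda$ large enough, so Banach's fixed point theorem yields a unique solution of the integral formulation of~\eqref{eq:VaxODE}; the linear growth of $F$ rules out finite-time blow-up, hence the solution extends to all of $\R^+$, and being an indefinite integral it is locally Lipschitz, in particular $I \in \Czero(\R^+;\R)$.

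For~\ref{it:ODE-i} and~\ref{it:ODE-ii} the idea is to linearise along the solution. Given $I$, set $c(\tau) \coloneqq \alpha(I(\tau))/I(\tau)$ where $I(\tau) \neq 0$ and $c(\tau) \coloneqq 0$ otherwise; by continuity of $I$ the function $c$ is measurable, and~\ref{hyp:(alpha)} gives $\abs{c(\tau)} \le \Lip(\alpha)$. Since $\alpha(I(\tau)) = c(\tau)\, I(\tau)$ for every $\tau$, the solution satisfies a.e.~the linear equation $\dot I(\tau) = a(\tau)\, I(\tau)$ with $a(\tau) \coloneqq -\beta + c(\tau)\, b(\tau) \in \LL\infty(\R^+;\R)$; multiplying by the integrating factor $\exp\bigl(-\int_0^{\cdot} a(\sigma)\dd\sigma\bigr)$ and using absolute continuity yields $I(t) = I_o \exp\bigl(\int_0^t a(\tau)\dd\tau\bigr)$ for all $t$. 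This gives~\ref{it:ODE-i} at once, and for~\ref{it:ODE-ii} the bound $a(\tau) \le \beta + \Lip(\alpha)\abs{b(\tau)} \le \beta + \Lip(\alpha)\norm{S}_{\LL\infty((0,t);\LL1((0,1);\R))} = K_t$ for a.e.~$\tau \in (0,t)$ yields $\abs{I(t)} = \abs{I_o}\exp\bigl(\int_0^t a(\tau)\dd\tau\bigr) \le \abs{I_o}\,\e^{K_t t}$.

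There is no serious obstacle here, the statement being essentially classical ODE theory; the only points deserving a little care are the measurability of the auxiliary coefficient $c$, which follows from continuity of $I$, and the fact that $b$ is merely bounded — not continuous — in time, which is precisely why the Carathéodory framework (rather than the Cauchy--Peano/Picard one for continuous data) is the appropriate setting.
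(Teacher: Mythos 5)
Your proof is correct. The paper's own treatment is terse: it declares existence, uniqueness and positivity ``standard'' and obtains item \emph{(ii)} from the integral inequality $\abs{I(t)} \le \abs{I_o} + \bigl(\beta + \Lip(\alpha)\norm{S}_{\LL\infty((0,t);\LL1)}\bigr)\int_0^t \abs{I(\tau)}\dd\tau$ via the Gronwall Lemma. You take a slightly different route for the quantitative part: instead of Gronwall, you linearise $\alpha$ along the solution by writing $\alpha(I(\tau)) = c(\tau) I(\tau)$ with $\abs{c} \le \Lip(\alpha)$ (legitimate precisely because $\alpha(0)=0$), which turns the equation into $\dot I = a(\tau) I$ and yields the exact representation $I(t) = I_o \exp\bigl(\int_0^t a\bigr)$. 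This buys you item \emph{(i)} and item \emph{(ii)} simultaneously from one formula, and it makes explicit where the hypothesis $\alpha(0)=0$ enters --- something the paper only alludes to. The minor points you flag (measurability of $c$ from continuity of $I$, boundedness of $b(t)=\int_0^1 S(t,\xi)\dd\xi$ via Cauchy--Schwarz on the unit interval) are handled correctly, and your Picard iteration with a weighted norm is a standard substitute for the existence argument the paper omits. No gaps.
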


\begin{proof}
  The proof of the well-posedness as well
  as Item \emph{(i)} are standard, hence we omit them.
  Let us precise here the importance of the fact that $\alpha(0) = 0$.
  Inequality~\eqref{eq:WellPosedness3} comes from an application of
  Gronwall Lemma since for all $t > 0$,
  \begin{equation*}
    |I(t)| \leq |I_o| + \left(\beta + \Lip(\alpha)
      \norm{S}_{\LL\infty((0, t); \LL1((0,1);\R))}\right) \int_0^t
    \abs{I(\tau)} \d{\tau}.
  \end{equation*}
\end{proof}

We continue with a stability result for the Cauchy
problem~\eqref{eq:VaxODE}. For simplicity, we call ``solution
to~\eqref{eq:VaxODE} with data $(S,I_o)$'' a solution
to~\eqref{eq:VaxODE} with initial datum $I_o$ and given function $S$.
\begin{lemma}
  \label{lmm:WellPosedness4}
  Assume~\ref{hyp:(alpha)}.
  Fix $S, \overline{S} \in \LL{\infty}((0, \infty); \LL{2}((0, 1); \R))$ and
  the initial conditions $I_o, \overline{I}_o \in \R$.
  Denote by $I$, resp.~$\overline{I}$,
  the solution to the Cauchy problem~\eqref{eq:VaxODE}
  with data $(S, I_o)$, resp.~$(\overline{S}, \overline{I}_o)$.
  Then for all $t > 0$,
  \begin{equation}
    \label{eq:WellPosedness4}
    \abs{I(t) - \overline{I}(t)}
    \leq \e^{K_tt}
    \left(\abs{I_o - \overline I_o} + \Lip(\alpha)
      \abs{\overline I_o} \int_0^t
      \e^{(\overline K_\tau - K_\tau) \, \tau }
    \norm{S(\tau) - \overline{S}(\tau)}_{\LL1((0, 1); \R)} \d{\tau} \right)
  \end{equation}
  where $K_t \coloneqq \beta + \Lip(\alpha)
  \norm{S}_{\LL\infty\left((0, t); \LL1((0,1);\R)\right)}$ and $\overline{K}_t \coloneqq \beta + \Lip(\alpha)
  \norm{\overline S}_{\LL\infty\left((0, t); \LL1((0,1);\R)\right)}$.
\end{lemma}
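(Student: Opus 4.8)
The plan is to reduce the estimate to a scalar Grönwall inequality for $w \coloneqq I - \overline I$. Subtracting the two integral formulations of~\eqref{eq:VaxODE} and splitting the nonlinear source term as
\begin{equation*}
  \alpha(I(\tau)) \int_0^1 S(\tau,\xi) \dd\xi - \alpha(\overline I(\tau)) \int_0^1 \overline S(\tau,\xi) \dd\xi
  = \bigl(\alpha(I(\tau)) - \alpha(\overline I(\tau))\bigr) \int_0^1 S(\tau,\xi) \dd\xi
  + \alpha(\overline I(\tau)) \int_0^1 \bigl(S(\tau,\xi) - \overline S(\tau,\xi)\bigr) \dd\xi ,
\end{equation*}
together with the Lipschitz bound $\abs{\alpha(I(\tau)) - \alpha(\overline I(\tau))} \le \Lip(\alpha)\abs{w(\tau)}$, one obtains, for all $t > 0$,
\begin{equation*}
  \abs{w(t)} \le \abs{I_o - \overline I_o} + \int_0^t b(\tau)\abs{w(\tau)} \dd\tau + \int_0^t \abs{\alpha(\overline I(\tau))}\, \norm{S(\tau) - \overline S(\tau)}_{\LL1((0,1);\R)} \dd\tau ,
\end{equation*}
where $b(\tau) \coloneqq \beta + \Lip(\alpha)\norm{S(\tau)}_{\LL1((0,1);\R)}$.

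The next step is to bound the last integrand. Since $\alpha(0) = 0$ and $\alpha$ is Lipschitz, $\abs{\alpha(\overline I(\tau))} \le \Lip(\alpha)\abs{\overline I(\tau)}$, and \Cref{lmm:WellPosedness3}, applied with data $(\overline S, \overline I_o)$, gives $\abs{\overline I(\tau)} \le \abs{\overline I_o}\,\e^{\overline K_\tau \tau}$; hence that integrand is at most $c(\tau) \coloneqq \Lip(\alpha)\abs{\overline I_o}\,\e^{\overline K_\tau \tau}\norm{S(\tau) - \overline S(\tau)}_{\LL1((0,1);\R)}$. I would then invoke Grönwall's lemma in its variation-of-constants form: from $\abs{w(t)} \le \abs{I_o - \overline I_o} + \int_0^t\bigl(b(\tau)\abs{w(\tau)} + c(\tau)\bigr)\dd\tau$ it follows that
\begin{equation*}
  \abs{w(t)} \le \abs{I_o - \overline I_o}\,\exp\Bigl(\int_0^t b\Bigr) + \int_0^t c(\tau)\,\exp\Bigl(\int_\tau^t b\Bigr) \dd\tau .
\end{equation*}
If one prefers not to differentiate $\abs{w}$, the same follows by working with $w^2$, or by applying the integrating factor $\tau \mapsto \exp(-\int_0^\tau b)$ to the absolutely continuous function $\abs{w}$.

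It remains to simplify the exponential weights. For $0 \le \tau \le t$ one has $b(s) \le K_t$ on $[\tau,t]$ directly from the definition of $K_t$, hence $\int_\tau^t b \le (t-\tau)K_t$ and, in particular, $\int_0^t b \le tK_t$; factoring $\e^{tK_t}$ out of the displayed bound leaves the weight $c(\tau)\,\e^{-\tau K_t}$, that is $\Lip(\alpha)\abs{\overline I_o}\,\e^{(\overline K_\tau - K_t)\tau}\norm{S(\tau) - \overline S(\tau)}_{\LL1((0,1);\R)}$. Finally, since $t \mapsto \norm{S}_{\LL\infty((0,t);\LL1((0,1);\R))}$ is non-decreasing, $K_\tau \le K_t$ whenever $\tau \le t$, so $\e^{(\overline K_\tau - K_t)\tau} \le \e^{(\overline K_\tau - K_\tau)\tau}$, which is exactly~\eqref{eq:WellPosedness4}.

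The argument is essentially routine; the only points requiring care are the choice of the splitting of the nonlinear term, so that the discrepancy $S - \overline S$ is carried by the coefficient $\alpha(\overline I)$ rather than by $\alpha(I)$ — this is where $\alpha(0) = 0$ and \Cref{lmm:WellPosedness3} enter — and the use of the sharp (variation-of-constants) version of Grönwall's lemma together with the monotonicity of $t \mapsto K_t$, which is precisely what yields the weight $\e^{(\overline K_\tau - K_\tau)\tau}$ inside the integral in place of the cruder $\e^{\overline K_\tau \tau}$.
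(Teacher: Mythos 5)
Your proof is correct and follows essentially the same route as the paper: the paper also subtracts the integral formulations, adds and subtracts $\alpha(\overline I(\tau))\int_0^1 S(\tau,\xi)\,\mathrm{d}\xi$ (its ``second case''), bounds $\abs{\alpha(\overline I(\tau))}$ via \Cref{lmm:WellPosedness3} and $\alpha(0)=0$, and concludes by Gronwall. You merely make explicit the variation-of-constants form of Gronwall and the monotonicity of $t\mapsto K_t$, which the paper uses implicitly to place the weight $\e^{(\overline K_\tau - K_\tau)\tau}$ inside the integral.
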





\begin{proof}
  We have
  \begin{displaymath}
    I(t) = I_o + \int_0^t\left(-\beta \, I(\tau) + \alpha(I(\tau))
      \int_0^1 S(\tau, \xi) \dd\xi\right)\dd\tau
  \end{displaymath}
  and similarly for $\overline {I}$.
  Thus
  \begin{align*}
    \abs{I (t) - \overline I(t)} 
    & \le 
    \abs{I_o - \overline I_o}
      + \beta \int_0^t  \abs{I (\tau) - \overline I(\tau)} \dd\tau
    \\
    & \quad
    +\int_0^t \abs{\alpha(I(\tau))\int_0^1 S(\tau, \xi) \dd\xi
    -
    \alpha(\overline I(\tau))\int_0^1 \overline S(\tau, \xi) \dd\xi
    }\dd\tau.
  \end{align*}
  Now we can add and subtract $\alpha(I(\tau))\int_0^1 \overline S(\tau, \xi) \dd\xi$ or $\alpha(\overline I(\tau))\int_0^1 S(\tau, \xi) \dd\xi$. In the first case we get
  \begin{align*}
    \abs{I (t) - \overline I(t)} 
    & \le 
    \abs{I_o - \overline I_o}
    + \beta \int_0^t  \abs{I (\tau) - \overline I(\tau)} \dd\tau
    +\int_0^t \abs{\alpha(I(\tau))} \cdot 
    \norm{ S(\tau) - \overline S(\tau)}_{\LL1((0,1);\R)} \dd\tau
    \\
    &+ \int_0^t
    \abs{\alpha(I(\tau)) - \alpha(\overline I(\tau))} \int_0^1 \abs{\overline S(\tau, \xi)} \dd\xi
    \dd\tau
    \\ 
    & \le \abs{I_o - \overline I_o}
    + \beta \int_0^t  \abs{I (\tau) - \overline I(\tau)} \dd\tau
    + \Lip(\alpha) \abs{I_o}  \int_0^t \e^{K_\tau \, \tau } \norm{ S(\tau) - \overline S(\tau)}_{\LL1((0,1);\R)} \dd\tau
    \\
    & + \Lip(\alpha) \norm{\overline S}_{\LL\infty((0,t);\LL1((0,1); \R))} \int_0^t \abs{I(\tau) - \overline I(\tau)} \dd\tau,
  \end{align*}
  where we used also~\eqref{eq:WellPosedness3} with
  $K_t \coloneqq \beta + \Lip(\alpha)
  \norm{S}_{\LL\infty\left((0, t); \LL1((0,1);\R)\right)}$.
  Setting also $\overline K_t \coloneqq \beta + \Lip(\alpha)
  \norm{\overline S}_{\LL\infty\left((0, t); \LL1((0,1);\R)\right)}$,
  by Gronwall Lemma we get
  \begin{displaymath}
    \abs{I (t) - \overline I(t)} 
    \le \e^{\overline K_t \, t} \left(
      \abs{I_o - \overline I_o}
      + \Lip(\alpha) \abs{I_o}  \int_0^t \e^{(K_\tau - \overline K_\tau)  \tau }
      \norm{ S(\tau) - \overline S(\tau)}_{\LL1((0,1);\R)} \dd\tau
    \right).
  \end{displaymath}
  In the second case, analogous computations lead to
  \begin{displaymath}
    \abs{I (t) - \overline I(t)} 
    \le  \e^{K_t \, t} \left(
      \abs{I_o - \overline I_o}
      + \Lip(\alpha) \abs{\overline I_o}
      \int_0^t  \e^{(\overline K_\tau - K_\tau) \, \tau }\norm{ S(\tau) - \overline S(\tau)}_{\LL1((0,1);\R)} \dd\tau
    \right).
  \end{displaymath}
  Therefore we deduce~\eqref{eq:WellPosedness4}.
\end{proof}

\begin{remark}
  Clearly, in \Cref{lmm:WellPosedness4}, the roles of $S$ and $\overline S$
  are symmetric. Therefore \eqref{eq:WellPosedness4} can be changed
  into
  \begin{equation*}
    |I(t) - \overline{I}(t)|
    \leq \e^{\overline K_tt}
    \left(\abs{I_o - \overline I_o} + \Lip(\alpha)
      \abs{\overline I_o} \int_0^t
      \e^{( K_\tau - \overline K_\tau) \, \tau }
    \norm{S(\tau) - \overline{S}(\tau)}_{\LL1((0, 1); \R)} \d{\tau} \right).
  \end{equation*}
\end{remark}

\subsubsection{The Mixed System}

This part contains the proofs of \Cref{th:WellPosedness} and
of~\Cref{th:Stability}.

\begin{proof}[\textbf{Proof of \Cref{th:WellPosedness}}]
  We divide the proof into various steps.
  \smallskip

  \textbf{Step 1: Local in time existence}. 
  Choose $T>0$ such that
  \begin{equation}
    \label{eq:constant-K-fixed-point}
    \left(\Lip(\alpha)\right)^2 \left(\abs{I_o} +
      \norm{S_o}_{\LL1((0,1);\R)}\right) 
    \norm{S_o}_{\LL1((0,1);\R)} T^2 \,\e^{2 K\,T} \le 1, 
  \end{equation}
    where the constant $K$ is defined by
    \begin{equation*}
      K\coloneqq \beta + \Lip(\alpha) \norm{S_o}_{\LL1((0,1);\R)}.
    \end{equation*}
  Consider the mappings
  \begin{equation*}
    \fonction{\Lambda}{\Czero([0, T]; \R^+)}
    {\Czero([0, T]; {\LL{2}}\left((0,1); \R^+\right))}{I}{S,}
  \end{equation*}
  where $S$ is the unique weak solution to \eqref{eq:VaxPDE}
  with data $(S_o, I)$ according to~\Cref{lmm:WellPosedness1}, and
  \begin{equation*}
    \fonction{\Upsilon}{\Czero([0, T]; {\LL{2}}\left((0,1); \R^+\right))}
    {\Czero([0, T]; \R^+)}{S}{I,}
  \end{equation*}
  where $I$ is the unique solution to~\eqref{eq:VaxODE} with data $(S, I_o)$,
  according to~\Cref{lmm:WellPosedness3}. The positivity of $\Lambda(I)$ and $\Upsilon(S)$ are ensured by~\Cref{lmm:WellPosedness1} and~\Cref{lmm:WellPosedness3} respectively.
 
  \smallskip  

  Fix $I, \overline{I} \in \Czero([0, T]; \R^+)$.
  By~\eqref{eq:WellPosedness4},
  defining $K_T \coloneqq \beta + \Lip(\alpha) \norm{\Lambda(I)}_{\LL\infty\left((0,T);
      \LL1\left((0,1); \R\right)\right)}$ and
  $\overline K_T \coloneqq \beta + \Lip(\alpha) \norm{\Lambda(\overline I)}
  _{\LL\infty\left((0,T);
      \LL1\left((0,1); \R\right)\right)}$,
  we have that, for every $t \in [0, T]$,
  \begin{equation}
    \label{eq:4}
    \begin{split}
      \abs{\left(\Upsilon \circ \Lambda(I)\right) (t) - \left(\Upsilon
          \circ \Lambda(\overline{I})\right) (t) } & \le \e^{K_T \, t}
      \, \Lip(\alpha) \abs{I_o} \int_0^t \e^{\overline K_T \tau}
      \norm{\Lambda(I)(\tau) -
        \Lambda(\overline I)(\tau)} _{\LL1\left((0,1);
          \R\right)}\dd\tau
      \\
      & \le \e^{2 K \, t} \, \Lip(\alpha) \abs{I_o} \int_0^t
      \norm{\Lambda(I)(\tau) - \Lambda(\overline I)(\tau)}
      _{\LL1\left((0,1); \R\right)}\dd\tau,
    \end{split}
  \end{equation}
  where we used the fact that $K_T \le K$ and
  $\overline K_T \le K$ by~\eqref{eq:WellPosedness-L1}.
  Using now~\eqref{eq:WellPosedness2a} and~\eqref{eq:constant-K-fixed-point},
  by~\eqref{eq:4} we deduce that, for all $t \in [0, T]$,
  \begin{align*}
    \abs{\left(\Upsilon \circ \Lambda(I)\right) (t)
    - \left(\Upsilon \circ \Lambda(\overline{I})\right) (t) }
    & \le \e^{2 K \, t} \, \left(\Lip(\alpha)\right)^2 \abs{I_o} \cdot 
      \norm{S_o}_{\LL1((0,1);\R)} \int_0^t
      \int_0^\tau \abs{I(\sigma) - \overline I (\sigma)}\dd\sigma \dd\tau
    \\ 
    & \le \frac{\left(\Lip(\alpha)\right)^2 \abs{I_o} \cdot 
      \norm{S_o}_{\LL1((0,1);\R)} T^2\, \e^{2 K\,T}}{2}
      \norm{I - \overline{I}}_{\Czero([0, T];\R)}
    \\
    & \le \frac{1}{2} \norm{I - \overline{I}}_{\Czero([0, T];\R)}.
  \end{align*}  
  Therefore
  \begin{displaymath}
    \norm{\Upsilon \circ \Lambda(I) - \Upsilon \circ \Lambda(\overline{I}) }
    _{\Czero([0,T]; \R)}
    \le\frac12 \norm{I-\overline{I}}_{\Czero([0,T];\R)},
  \end{displaymath}
  ensuring that $\Upsilon \circ \Lambda$ admits a unique fixed point
  $I^\star \in \Czero([0, T]; \R)$.  By construction
  and~\Cref{lmm:WellPosedness1}, the couple $(S^\star, I^\star)$,
  where $S^*\coloneqq \Lambda(I^*) \in \Czero([0, T]; \LL{2}((0,1);\R))$, is
  a solution to~\eqref{eq:VaxSystem} on $[0, T]$ with initial data
  $(S_o, I_o)$, according to \Cref{def:solution}.
   
  \smallskip

  \textbf{Step 2: A priori estimate.}
  Let $(S, I)$ be a solution to \eqref{eq:VaxSystem}
  on the time interval $[0, T]$ for some $T>0$,
  according to~\Cref{def:solution}.

  Clearly, by \Cref{lmm:WellPosedness1}, the
  bounds~\eqref{eq:WellPosedness} for $S$ hold for every $t \in [0, T]$.

  We now prove the bound~\eqref{eq:estimate-sol_I} for $t \in [0, T]$,
  related to $I$.  Let $(S_o^{(n)})_n$ be a
  sequence of nonnegative $\Ck{1}$ functions that converges to $S_o$
  in $\LL{1}\left((0,1); \R\right)$.
  For all $n \in \N$, call $S_n$ the distributional solution to
  \begin{equation*}
    \left\{
      \begin{aligned}
        \p_t S_n + \p_\xi (g(\xi, u_1(t), u_2(t)) S_n)
        & = - (f + \alpha (I)) S_n \\
        S_n(0, \xi) & = S_o^{(n)}(\xi),
      \end{aligned}
    \right.
  \end{equation*}
  which is also a classical solution since $S_o^{(n)}$ is smooth.
  Let $\varphi \in \Cc\infty\left((0, +\infty); \R^+\right)$.
  For all
  $n \in \N$ and $t \in [0, T]$, we have
  \begin{align*}
    & \quad
      \int_0^{+\infty} \varphi'(t) \left(\int_0^1 S_n(t, \xi) \dd \xi + I(t)\right)
      \dd t
      = \int_0^{+\infty} \varphi'(t)
      \int_0^1 S_n(t, \xi) \dd \xi \dd t
      + \int_0^{+\infty} \varphi'(t) I(t) \dd t
    \\
    & = \int_0^{+\infty} \int_0^1 \varphi'(t) S_n(t, \xi)
      \dd \xi \dd t + \beta \int_0^{+\infty} \varphi(t) I(t) \dd t
      - \int_0^{+\infty} \varphi(t) \alpha (I(t)) \int_0^1 S(t, \xi) \dd \xi \dd t
    \\
    & = \int_0^{+\infty} \!\!\int_0^1 
      {\partial_\xi \left(S_n(t, \xi) g(\xi, u_1(t),
      u_2(t))\right)} \varphi(t) \dd \xi \dd t
      + \!\! \int_0^{+\infty} \!\!\int_0^1 S_n(t, \xi) f(\xi)
      \varphi(t) \dd \xi \dd t
    \\
    & \quad + \int_0^{+\infty} \int_0^1 S_n(t, \xi) \alpha(I(t))
      \varphi(t) \dd \xi \dd t
      - \int_0^1 S_o^{(n)}(\xi) \varphi(0) \dd \xi \dd t
    \\
    & \quad
      + \beta \int_0^{+\infty} \varphi(t) I(t) \dd t
      - \int_0^{+\infty} \varphi(t) \alpha (I(t)) \int_0^1 S(t, \xi) \dd \xi \dd t
    \\
    & = \int_0^{+\infty} \!\!\int_0^1 S_n(t, \xi) f(\xi)
      \varphi(t) \dd \xi \dd t
      + \int_0^{+\infty} \int_0^1 S_n(t, \xi) \alpha(I(t))
      \varphi(t) \dd \xi \dd t
    \\
    & \quad
      + \beta \int_0^{+\infty} \varphi(t) I(t) \dd t
      - \int_0^{+\infty} \varphi(t) \alpha (I(t)) \int_0^1 S(t, \xi) \dd \xi \dd t
    \\
    & \ge \int_0^{+\infty} \int_0^1 S_n(t, \xi) \alpha(I(t))
      \varphi(t) \dd \xi \dd t
      - \int_0^{+\infty} \varphi(t) \alpha (I(t)) \int_0^1 S(t, \xi) \dd \xi \dd t.
  \end{align*}
  Using \Cref{lmm:WellPosedness2}, the bound~\eqref{eq:WellPosedness},
  and passing to the limit as $n\to+\infty$ in the previous estimates, we
  deduce that
  \begin{equation*}
    \int_0^{+\infty} \varphi'(t) \left(\int_0^1 S(t, \xi) \dd \xi + I(t)\right)
      \dd t \ge 0,
  \end{equation*}
  so that the distributional derivative of
  \begin{equation*}
    t \mapsto \int_0^1 S(t, \xi) \dd \xi + I(t)
  \end{equation*}
  is a positive measure.
  Thus, for every $t \in [0, T]$, we obtain that
  \begin{equation*}
    I(t) \le 
    \int_0^1 S(t, \xi) \d{\xi} + I(t) \leq \norm{S_o}_{\LL1\left((0,1); \R\right)}
    + I_o,
  \end{equation*}
  proving~\eqref{eq:estimate-sol_I} for $t \in [0, T]$.

  \textbf{Step 3: Global in time existence.} Introduce the time
  \begin{equation*}
    T^* \coloneqq  \sup \biggl\{ \tau > 0 \; : \;
    \eqref{eq:VaxSystem} \;
    \text{admits a solution on} \; [0, t] \; \text{for all} \; t \in [0, \tau[
    \biggr\}.
  \end{equation*}

  Step 1 ensures that $T^*$ is well-defined and $T^* > 0$. Suppose,
  by contradiction, that
  $T^* < +\infty$ and choose $0 < \tau < \frac{T^*}{2}$ such that
  \begin{equation}
    \label{eq:tau}
    2 \left(\Lip(\alpha)\right)^2 \left(\abs{I_o}
      + \norm{S_o}_{\LL1((0,1);\R)} \right)
    \norm{S_o}_{\LL1((0,1);\R)} \tau^2 \,\e^{2 \, K\,\tau} \le 1. 
  \end{equation}

  Set $T \coloneqq T^* - \tau/2$. Since $T < T^*$, system~\eqref{eq:VaxSystem}
  admits a solution on $[0, T]$, say $(S_1,
  I_1)$. 
  By~\eqref{eq:WellPosedness}, \eqref{eq:estimate-sol_I}, and~\eqref{eq:tau}
  we deduce that
  \begin{align*}
    & \quad \left(\Lip(\alpha)\right)^2 \left(\abs{I_1(T)} +
      \norm{S_1(T)}_{\LL1((0,1);\R)}\right) 
      \norm{S_1(T)}_{\LL1((0,1);\R)} \tau^2 \,\e^{2 K\,\tau}
    \\
    & \le \left(\Lip(\alpha)\right)^2 \left(\abs{I_o} +
      2 \norm{S_o}_{\LL1((0,1);\R)}\right) 
      \norm{S_o}_{\LL1((0,1);\R)} \tau^2 \,\e^{2 K\,\tau}
    \\
    & \le 2 \left(\Lip(\alpha)\right)^2 \left(\abs{I_o} +
      \norm{S_o}_{\LL1((0,1);\R)}\right) 
      \norm{S_o}_{\LL1((0,1);\R)} \tau^2 \,\e^{2 K\,\tau}
    \\
    & \le 1.
  \end{align*}
  Therefore, by Step 1, we can construct a solution $(S_2, I_2)$
  to~\eqref{eq:VaxSystem} on $[T, T+\tau]$ with initial condition
  $(S_1(T), I_1(T))$.
  Clearly the concatenation $(S, I)$, patching together $(S_1, I_1)$
  and $(S_2, T_2)$, is a solution to~\eqref{eq:VaxSystem} on $[0, T+\tau]$,
  with $T+\tau = T^* + \frac{\tau}{2} > T^*$,
  contradicting the choice of $T^*$. Therefore,
  $T^* = +\infty$, concluding the proof of the global in time existence.
  \smallskip

  \textbf{Step 4: Uniqueness of solution.}
  Assume, by contradiction, that there are two distinct solutions,
  namely $(S, I)$ and $(\tilde S, \tilde I)$, to~\eqref{eq:VaxSystem}
  with the same initial condition~\eqref{eq:IC}.
  For every $t > 0$ and $\xi \in (0,1)$,
  a similar reasoning as in \cite[Theorem~3.2]{Keimer2017}
  implies that
  \begin{align}
    \label{eq:5}
    S(t, \xi)
    & = S_o(X(0; t, \xi)) \exp\left(-\int_0^t \left[f(X(s; t, \xi))
      + \alpha(I\left(s\right))
      + \p_\xi G(X(s; t, \xi), s) \right]\dd s\right) 
    \\
    \label{eq:6}
    \tilde S(t, \xi)
    & = S_o(X(0; t, \xi)) \exp\left(-\int_0^t \left[f(X(s; t, \xi))
      + \alpha(\tilde I\left(s\right))
      + \p_\xi G(X(s; t, \xi), s) \right]\dd s\right)
    \\
    \label{eq:7}
    I(t)
    & = I_o - \int_0^t \beta I(\tau) \dd \tau + \int_0^t \alpha(I(\tau))
      \left(\int_0^1 S(\tau, \xi) \dd \xi\right) \dd \tau
    \\
    \label{eq:8}
    \tilde I(t)
    & = I_o - \int_0^t \beta \tilde I(\tau) \dd \tau
      + \int_0^t \alpha(\tilde I(\tau))
      \left(\int_0^1 \tilde S(\tau, \xi) \dd \xi\right) \dd \tau,
  \end{align}
  where $G$ is defined in~\eqref{eq:G-flux}.

  Using~\eqref{eq:5}, \eqref{eq:6}, the change of variable $y = X(0; t, \xi)$, 
  \ref{hyp:(alpha)}, and~\ref{hyp:(F)}, we deduce that, for every
  $t > 0$,
  \begin{align}
    \nonumber
    \norm{S(t) - \tilde S(t)}_{\LL1\left((0,1); \R\right)}
    & \le \int_0^1 \abs{S_o(X(0; t, \xi))}
      \exp\left(-\int_0^t \left[f(X(s; t, \xi))
      + \p_\xi G(X(s; t, \xi), s) \right]\dd s\right)
    \\
    \nonumber
    & \quad \quad \times
      \abs{\exp\left(-\int_0^t \alpha(I\left(s\right))
      \dd s\right) - \exp\left(-\int_0^t \alpha(\tilde I\left(s\right))
      \dd s\right)} \dd \xi
    \\
    \nonumber
    & \le \int_0^1 \abs{S_o(y)}
      \exp\left(-\int_0^t f(y)
      \dd s\right) \dd y
      \int_0^t
      \abs{\alpha(I\left(s\right)) - \alpha(\tilde I\left(s\right))} \dd s
    \\
    \label{eq:L1-dist-S-S}
    & \le \Lip(\alpha) \norm{S_o}_{\LL1\left((0,1); \R\right)}
      \norm{I - \tilde I}_{\LL1\left((0,t); \R\right)}.
  \end{align}
  Using~\eqref{eq:7}, \eqref{eq:8}, 
  \ref{hyp:(alpha)}, \eqref{eq:WellPosedness}, \eqref{eq:estimate-sol_I},
  and~\eqref{eq:L1-dist-S-S},  we deduce that, for every
  $t > 0$,
  \begin{align*}
    \abs{I(t) - \tilde I(t)}
    & \le \beta \int_0^t \abs{I(\tau) - \tilde I(\tau)} \dd \tau
      + \int_0^t \alpha(I(\tau)) \abs{\int_0^1 S(\tau, \xi) \dd \xi
      - \int_0^1 \tilde S(\tau, \xi) \dd \xi} \dd \tau
    \\
    & \quad + \int_0^t \abs{\int_0^1 \tilde S(\tau, \xi) \dd \xi} \cdot 
      \abs{\alpha(I(\tau)) - \alpha(\tilde I(\tau))} \dd \tau
    \\
    & \le \beta \int_0^t \abs{I(\tau) - \tilde I(\tau)} \dd \tau
      + \int_0^t \alpha(I(\tau)) \norm{S(\tau) - \tilde S(\tau)}
      _{\LL1\left((0,1);\R\right)}\dd \tau
    \\
    & \quad + \Lip(\alpha) \int_0^t \norm{\tilde S(\tau)}
      _{\LL1\left((0,1); \R\right)}
      \abs{I(\tau) - \tilde I(\tau)} \dd \tau
    \\
    & \le \beta \int_0^t \abs{I(\tau) - \tilde I(\tau)} \dd \tau
      + \left(\Lip(\alpha)\right)^2 \norm{S_o}
      _{\LL1\left((0,1);\R\right)}\int_0^t \abs{I(\tau)}
      \norm{I - \tilde I}_{\LL1\left((0,\tau); \R\right)}\dd \tau
    \\
    & \quad + \Lip(\alpha) \norm{\tilde S_o}_{\LL1\left((0,1);\R\right)} \int_0^t 
      \abs{I(\tau) - \tilde I(\tau)} \dd \tau
    \\
    & \le \left(\beta + \left(\Lip(\alpha)\right)^2 \norm{S_o}
      _{\LL1\left((0,1);\R\right)}
      \left(I_o + \norm{S_o}_{\LL1\left((0,1); \R\right)}\right) t
      + \Lip(\alpha) \norm{\tilde S_o}_{\LL1\left((0,1);\R\right)}\right)
    \\
    & \quad \quad \times
      \int_0^t \abs{I(\tau) - \tilde I(\tau)} \dd \tau.      
  \end{align*}
  Gronwall Lemma implies that $I(t) = \tilde I(t)$ for every $t >0$.
  Finally~\eqref{eq:L1-dist-S-S} implies that $S(t) = \tilde S(t)$ in
  $\LL1\left((0,1); \R\right)$ for all $t > 0$. 
  
  In the case $S_o \in \HH1\left((0,1); \R^+\right)$, the required higher regularity of the solution follows by the representation formula~\eqref{eq:charact-curves}.
  This concludes the proof.  
\end{proof}

\begin{proof}[\textbf{Proof of~\Cref{th:Stability}}]
  By~\Cref{lmm:WellPosedness2}, for all $t > 0$ we have
  \begin{align*}
    \norm{S(t)- \overline S(t)}_{\LL1((0,1);\R)} 
    & \le \norm{S_o - \overline S_o}_{\LL1((0,1); \R)}
    + \Lip(\alpha) \norm{\overline S_o}_{\LL1((0,1);\R)}  
    \norm{I - \overline I}_{\LL1((0,t);\R)}.
  \end{align*}
  On the other hand, by~\Cref{lmm:WellPosedness4}, for all $t > 0$ we have
  \begin{align*}
    \abs{I(t) - \overline I(t)}
    & \le \e^{K_1 \, t} \left(
      \abs{I_o-\overline I_o} + \Lip(\alpha) \abs{\overline I_o}\int_0^t 
      \e^{K_2 \, \tau}
      \norm{S(\tau)- \overline S(\tau)}_{\LL1((0,1);\R)}  \dd\tau
    \right),
  \end{align*}
  where $K_1$ and $K_2$ are defined in~\eqref{eq:K12}.
  Hence, for all $t > 0$
  \begin{align*}
    & \norm{S(t)- \overline S(t)}_{\LL1((0,1);\R)}
    + \abs{I(t) - \overline I(t)}
    \\
    & \le \norm{S_o - \overline S_o}_{\LL1((0,1); \R)}
    +  \e^{K_1 \, t} \abs{I_o-\overline I_o}
    \\
    & \quad +\Lip(\alpha) 
    \left(\norm{\overline S_o}_{\LL1((0,1);\R)} + \e^{K_1 \, t }\abs{\overline I_o} \right)
      \int_0^t \e^{K_2 \, \tau}\left(
        \norm{S(\tau) - \overline S(\tau)}_{\LL1((0,1);\R)}
        + \abs{I(\tau) - \overline I(\tau)}
    \right)\dd\tau.
  \end{align*}
  Set $\mathcal K_t$ as in~\eqref{eq:mathcalK}.
  An application of Gronwall Lemma yields
  \begin{align*}
    & \quad \norm{S(t)- \overline S(t)}_{\LL1((0,1);\R)}
    + \abs{I(t) - \overline I(t)}
    \\
    & \le \norm{S_o - \overline S_o}_{\LL1((0,1); \R)}
    +  \e^{K_1 \, t} \abs{I_o-\overline I_o}
    \\
    & \quad + \mathcal{K}_t
    \int_0^t \left(
      \norm{S_o - \overline S_o}_{\LL1((0,1); \R)}
    +  \e^{K_1 \, \tau} \abs{I_o-\overline I_o} 
    \right)
    \e^{K_2 \, \tau}
    \exp\left(\int_\tau^t \e^{K_2 \, s} \mathcal{K}_s \dd{s}
    \right)\dd\tau
    \\
    & \le \left[\norm{S_o - \overline S_o}_{\LL1((0,1); \R)}
      +  \e^{K_1 \, t} \abs{I_o-\overline I_o}\right]
      \left(1 + t \e^{K_2 t} \mathcal K_t \exp(t \e^{K_2 t} \mathcal K_t)\right).
  \end{align*}
 concluding the proof.  
\end{proof}

\subsection{Decoupled optimization}
\label{ssec:DecoupledOptimization}

\begin{proof}[\textbf{Proof of~\Cref{th:DecoupledOptimization}}]
  Observe first that, since~\ref{hyp:(G)} holds, for all
  $u_1 \in \cU_1$ and
  $u_2 \in \cU_2$, the functional $\mathcal F$,
  defined in~\eqref{eq:CostFunctional}, satisfies the estimate
  \begin{equation}
    \label{eq:bound_cF}
    \abs{\cF(S_o, I_o, u_1, u_2)}
    \leq \frac{\kappa (\norm{S_o}_{\LL{1}((0,1); \R)} + I_o + M_1) + M_2 + \gamma}{\theta},
  \end{equation}
  where $\gamma$ is defined in~\ref{hyp:(G)}.


  Consider~\Cref{it:u1}. 
  Fix $u_2 \in \cU_2$ and consider a minimizing
  sequence of controls $(u_1^{(n)})_n$, taking values in
  $\cU_1$, such that
  \begin{equation}
    \label{eq:minimizing-sequence}
    \lim_{n \to +\infty}\cF(S_o, I_o, u_1^{(n)}, u_2)
    = \inf_{u_1 \in \cU_1} \cF (S_o, I_o, u_1, u_2).
  \end{equation}
  The sequence $(u_1^{(n)})_n$ is bounded in $\LL{\infty}$, therefore
  it admits a subsequence, which we do not relabel, that converges in
  the weak* sense: there exists
  $\overline{u}_1 \in \cU_1$ such that, for
  every $\psi \in \LL{1}(\R^+; \R)$,
  \begin{equation*}
    \lim_{n \to + \infty}
    \int_{0}^{+\infty} u_1^{(n)}(t) \psi(t) \dd{t} 
    = \int_{0}^{+\infty} \overline{u}_1(t) \psi(t) \dd{t}.
  \end{equation*}
  For all $n \in \N$, call $(S_n, I_n)$ the solution
  to~\eqref{eq:VaxSystem} with controls $(u_1^{(n)}, u_2)$ and initial
  data $(S_o, I_o)$.  By \Cref{rmk:characteristics-representation}, we
  have that
  \begin{equation}
    \label{eq:S_n}
    S_n(t, \xi) = S_o (X_n(0; t, \xi)) \exp \left(-\int_{0}^{t}
      \left[c_n(s,X_n(s; t, \xi)) + \p_1 g(X_n(s; t, \xi), u_1^{(n)}(s), u_2(s))
      \right] \dd{s} \right),
  \end{equation}
  where $c_n (s,\zeta) = f(\zeta) + \alpha (I_n(s))$ and
  $X_n = X_n(s; t, \xi)$ solves~\eqref{eq:charact-curves} with $u_1$
  replaced by $u_1^{(n)}$.  Hence, by~\ref{hyp:(G-1)}, for every
  $s \in \R^+$
  \begin{equation}
    \label{eq:charact-n}
    \begin{split}
      X_n(s; t, \xi) & = \xi + \int_t^s g\left(X_n\left(\tau; t,
          \xi\right), u_1^{(n)}(\tau), u_2(\tau)\right) \dd \tau
      \\
      & = \xi + \int_t^s g_1\left(X_n\left(\tau; t, \xi\right) \right)
      \dd \tau + \int_t^s g_2\left(X_n\left(\tau; t, \xi\right)\right)
      u_1^{(n)}(\tau) \dd \tau + \int_t^s g_3\left(X_n\left(\tau; t,
          \xi\right)\right) u_2(\tau) \dd \tau.
    \end{split}
  \end{equation}
  For all $(t, \xi) \in (0,\infty) \times [0, 1]$,
  the sequence $s \mapsto X_n(s; t, \xi)$ is uniformly equicontinuous
  and bounded on any compact subset of $\R^+$, since
  $\abs{\p_1 X_n(s; t, \xi)}\leq \gamma$, by~\ref{hyp:(G)}.  Then,
  Ascoli-Arzelà Theorem implies that, up to a subsequence, it
  converges to a function $s \mapsto \Psi_{t, \xi}(s)$.

  Using~\ref{hyp:(G-1)}, the Lebesgue Theorem, and the weak* convergence of
  $u_1^{(n)}$, we deduce that, for all $s \in \R^+$,
  \begin{equation}
    \label{eq:limits-g1-g3}
    \begin{split}
      \lim_{n \to + \infty} \int_t^s g_1\left(X_n\left(\tau; t,
          \xi\right) \right) \dd \tau & = \int_t^s g_1\left(\Psi_{t,
          \xi} (\tau) \right) \dd \tau
      \\
      \lim_{n \to + \infty} \int_t^s g_3\left(X_n\left(\tau; t,
          \xi\right) \right) u_2(\tau) \dd \tau & = \int_t^s
      g_3\left(\Psi_{t, \xi} (\tau) \right) u_2(\tau) \dd \tau
      \\
      \lim_{n \to + \infty}
      \int_t^s g_2\left(\Psi_{t, \xi}\left(\tau\right)\right) u_1^{(n)}(\tau)
      \dd \tau
      & = \int_t^s g_2\left(\Psi_{t, \xi}\left(\tau\right)\right)
      \overline{u}_1(\tau) \dd \tau.
    \end{split}
  \end{equation}
  Note that
  \begin{align*}
    \abs{\int_t^s \left(g_2\left(X_n\left(\tau; t, \xi\right)\right)
    - g_2\left(\Psi_{t, \xi}(\tau)\right)\right) u_1^{(n)}(\tau)
    \dd \tau}
    & \le
      \int_{\min\{t, s\}}^{\max\{t, s\}}
      \abs{g_2\left(X_n\left(\tau; t, \xi\right)\right)
      - g_2\left(\Psi_{t, \xi}(\tau)\right)} \abs{u_1^{(n)}(\tau)} \dd \tau
    \\
    & \le
      M_1 \int_{\min\{t, s\}}^{\max\{t, s\}}
      \abs{g_2\left(X_n\left(\tau; t, \xi\right)\right)
      - g_2\left(\Psi_{t, \xi}(\tau)\right)} \dd \tau
    \\
    & \le
      M_1 \sup_{\tau} 
      \abs{g_2\left(X_n\left(\tau; t, \xi\right)\right)
      - g_2\left(\Psi_{t, \xi}(\tau)\right)} \abs{t-s}
  \end{align*}
  and so
  \begin{equation}
    \label{eq:9}
    \lim_{n \to + \infty}
    \int_t^s \left(g_2\left(X_n\left(\tau; t, \xi\right)\right)
      - g_2\left(\Psi_{t, \xi}(\tau)\right)\right) u_1^{(n)}(\tau)
    \dd \tau = 0.
  \end{equation}
  Using~\eqref{eq:limits-g1-g3} and~\eqref{eq:9} and passing to
  the limit as $n \to + \infty$ in~\eqref{eq:charact-n}, we deduce that
  \begin{equation}
    \label{eq:charact-psi}
    \begin{split}
      \Psi_{t, \xi}(s)
      & = 
      \xi + \int_t^s g_1\left(\Psi_{t, \xi} \left(\tau\right) \right)
      \dd \tau + \int_t^s g_2\left(\Psi_{t, \xi} \left(\tau\right)\right)
      \overline{u}_1(\tau) \dd \tau + \int_t^s g_3
      \left(\Psi_{t, \xi}\left(\tau\right)\right) u_2(\tau) \dd \tau
      \\
      & = \xi + \int_t^s g\left(\Psi_{t, \xi}(\tau), \overline{u}_1(\tau),
        u_2(\tau)\right) \dd \tau
    \end{split}
  \end{equation}
  for every $s \in \R^+$. Hence $\Psi_{t, \xi}$ solves
  the Cauchy problem~\eqref{eq:charact-curves} with the controls
  $\bar u_1$ and $u_2$.
  \smallskip

  The sequence $t \mapsto I_n(t)$ is uniformly equicontinuous and
  bounded on any compact subset $\R^+$. Indeed, the
  boundedness follows directly from~\eqref{eq:estimate-sol_I}, while
  by~\ref{hyp:(alpha)}, \eqref{eq:WellPosedness}
  and~\eqref{eq:estimate-sol_I}, for all $t \in \R^+$
  \begin{displaymath}
    \abs{\dot I_n(t)}
    \le 
    \left(\beta + \Lip(\alpha) \norm{S_o}_{\LL1((0,1);\R)}\right)
     \left(\norm{S_o}_{\LL1((0,1);\R)} + I_o\right).
  \end{displaymath} 
  Combining Ascoli-Arzelà Theorem
  and a standard diagonal procedure, we can find
  $\overline I \in \Czero(\R^+; \R^+)$ and a subsequence of
  $(I_n)_n$ that converges uniformly to $\overline I$ on any compact subset
  of $\R^+$.  
  
  Define $\overline S$, for $t\in \R^+$ and $\xi \in (0,1)$, as
  \begin{equation}
    \label{eq:bar-S-control}
    \overline{S}(t,\xi) \coloneqq 
    S_o (\Psi_{t,\xi}(0)) \exp \left(-\int_{0}^{t}
      \left[f(\Psi_{t,\xi}(s)) + \alpha(\overline{I}\left(s\right))
        + \p_\xi g(\Psi_{t,\xi}(s), \overline{u}_1(s), u_2(s)) \right]\dd{s}
    \right),
  \end{equation}
  that is, $\overline S$ is a solution to the partial differential
  equation~\eqref{eq:VaxPDE} with data $(S_o, \overline{I})$
  and controls $(\overline{u}_1, u_2)$.
  We claim that, for every $t > 0$,
  \begin{equation}
    \label{eq:10}
    \lim_{n \to +\infty} \norm{S_n(t) - \overline S(t)}_{\LL2\left((0,1); \R\right)}
    = 0.
  \end{equation}
  Fix $t>0$, $\eps > 0$, and choose
  $S_{o, \eps} \in \Ck\infty\left((0,1); \R^+\right)$
  such that $\norm{S_o - S_{o, \eps}}_{\LL2\left((0,1); \R\right)} < \eps$.
  Therefore
  \begin{align*}
    \left(\int_0^1 \abs{S_o(\Psi_{t,\xi}(0)) - S_o(X_n\left(0; t, \xi\right))}^2
    \dd \xi\right)^\frac12
    & \le \left(\int_0^1 \abs{S_o(\Psi_{t,\xi}(0)) - S_{o, \eps}
      (\Psi_{t,\xi}(0))}^2\dd\xi\right)^\frac12
    \\
    & \quad + \left(\int_0^1 \abs{S_{o, \eps}(\Psi_{t,\xi}(0))
      - S_{o, \eps}(X_n\left(0; t, \xi\right))}^2\dd\xi\right)^\frac12
    \\
    & \quad + \left(\int_0^1 \abs{S_{o, \eps}(X_n\left(0; t, \xi\right)) - S_{o}
      (X_n\left(0; t, \xi\right))}^2\dd\xi\right)^\frac12.
  \end{align*}
  Using the change of variable $y = \Psi_{t, \xi}(0)$
  and~\ref{hyp:(G)}, we deduce that
  \begin{align*}
    \int_0^1 \abs{S_o(\Psi_{t,\xi}(0)) - S_{o, \eps}(\Psi_{t,\xi}(0))}^2\dd\xi
    & = \int_0^1 \abs{S_o(y) - S_{o, \eps}(y)}^2
      \e^{\int_0^t \partial_1 g\left(\Psi_{0,y}(s), \overline u_1(s), u_2(s)
      \right) \dd s} \dd y
    \\
    & \le \e^{\gamma \, t} \int_0^1 \abs{S_o(y) - S_{o, \eps}(y)}^2 \dd y
      <  \e^{\gamma \, t} \eps^2.
  \end{align*}
  Similarly, we obtain that
  \begin{equation*}
    \int_0^1 \abs{S_{o, \eps}(X_n\left(0; t, \xi\right)) - S_{o}
      (X_n\left(0; t, \xi\right))}^2\dd\xi <  \e^{\gamma \, t} \eps^2 
  \end{equation*}
  and so
  \begin{align*}
    \left(\int_0^1 \abs{S_o(\Psi_{t,\xi}(0)) - S_o(X_n\left(0; t, \xi\right))}^2
    \dd \xi\right)^\frac12
    \le 2 \,\e^{\frac{\gamma \, t}{2}} \eps
    + \left(\int_0^1 \abs{S_{o, \eps}(\Psi_{t,\xi}(0))
    - S_{o, \eps}(X_n\left(0; t, \xi\right))}^2\dd\xi\right)^\frac12.
  \end{align*}
  Moreover, Lebesgue Theorem implies that
  \begin{equation*}
    \lim_{n \to + \infty} \int_0^1 \abs{S_{o, \eps}(
      \Psi_{t,\xi}(0)) - S_{o, \eps}(X_n\left(0; t, \xi\right))}^2\dd\xi = 0.
  \end{equation*}
  Hence we conclude that
  \begin{equation*}
    \lim_{n \to +\infty}
    \left(\int_0^1 \abs{S_o(\Psi_{t,\xi}(0)) - S_o(X_n\left(0; t, \xi\right))}^2
      \dd \xi\right)^\frac12
    \le 2 \e^{\frac{\gamma \, t}{2}} \eps.
  \end{equation*}
  Since the previous inequality holds for every $\eps > 0$, we conclude
  that
  \begin{equation}
    \label{eq:first-limit}
    \lim_{n \to +\infty}
    \left(\int_0^1 \abs{S_o(\Psi_{t,\xi}(0)) - S_o(X_n\left(0; t, \xi\right))}^2
      \dd \xi\right)^\frac12
    = 0.
  \end{equation}
  By~\ref{hyp:(alpha)}, \ref{hyp:(F)}, \ref{hyp:(G-1)},
  \eqref{eq:estimate-sol_I}, the uniform convergence of $X_n$ and of
  $I_n$ implies that the sequence of functions
  \begin{align*}
    \xi \mapsto
    \exp \left(-\int_{0}^{t}
    \left[f\left(X_n(s; t, \xi)\right) + \alpha\left(I_n(s)\right)
    + \p_1 g(X_n(s; t, \xi), u_1^{(n)}(s), u_2(s))\right]
    \dd{s} \right)
  \end{align*}
  converges in $\LL2\left((0,1); \R\right)$ to the function
  \begin{align*}
    \xi \mapsto
    \exp \left(-\int_{0}^{t}
    \left[f\left(\Psi_{t, \xi}(s)\right) + \alpha\left(\overline I(s)\right)
    + \p_1 g(\Psi_{t, \xi}(s), \overline u_1(s), u_2(s))\right]
    \dd{s} \right).
  \end{align*}
  Thus, using~\eqref{eq:first-limit}, we conclude that~\eqref{eq:10} holds,
  so that the claim is proved.

  Since 
  \begin{displaymath}
    I_n(t) = I_o + \int_0^t \left(-\beta I_n(\tau) 
    + \alpha(I_n(\tau)) \int_0^1 S_n(\tau, \xi)\dd\xi\right)\dd\tau,
  \end{displaymath}
  passing to the limit we obtain that
  \begin{displaymath}
    \overline{I}(t) = I_o + \int_0^t \left(-\beta \overline{I}(\tau) 
      + \alpha(\overline{I}(\tau)) \int_0^1 \overline{S}(\tau, \xi)\dd\xi\right)
    \dd\tau,
  \end{displaymath}
  that is, $\overline{I}$ is a solution to~\eqref{eq:VaxODE}
  with data $(\overline{S}, I_o)$.

  Therefore, the couple $(\overline{S}, \overline{I})$ is the 
  solution to~\eqref{eq:VaxSystem} with controls 
  $(\overline{u}_1, u_2)$ and initial data $(S_o, I_o)$.





  Finally, using~\ref{hyp:(G-1)}, \eqref{eq:minimizing-sequence},
  and~\eqref{eq:CostFunctional}, we deduce that
  \begin{align*}
    \inf_{u_1 \in \cU_1} \cF(S_o, I_o, u_1, u_2)
    & = \liminf_{n \to
      +\infty} \cF(S_o, I_o, u_1^{(n)}, u_2)
    \\
    & \ge \liminf_{n \to +\infty} \, \kappa \int_{0}^{+\infty}
      \e^{-\theta t} (I_n(t) + u_1^{(n)}(t)) \dd{t}
    \\
    & \quad - \lim_{n \to +\infty}
      \int_{0}^{+\infty} \e^{-\theta t} \biggl[ u_2(t)
      + \int_0^1 g(\xi, u_1^{(n)}(t), u_2(t)) \d{\xi} \biggr] \dd{t}
    \\
    & = \kappa \int_{0}^{+\infty}
      \e^{-\theta t} (\overline I(t) + \overline u_1(t)) \dd{t}
    \\
    & \quad - 
      \int_{0}^{+\infty} \e^{-\theta t} \biggl[ u_2(t)
      + \int_0^1 g(\xi, \overline u_1(t), u_2(t)) \d{\xi} \biggr] \dd{t}
    \\
    & = \cF(S_o, I_o, \overline u_1, u_2)
  \end{align*}
  ensuring that $\overline{u}_1$ is a minimizer of
  $u_1 \mapsto \cF\left(S_o, I_o, u_1, u_2\right)$ in
  $\cU_1$.  \smallskip

  The proof of~\Cref{it:u2} in~\Cref{th:DecoupledOptimization} is
  similar, thus omitted.
\end{proof}

\subsection{Two-player game}
\label{ssec:Game}

\subsubsection{Dynamic Programming Principle}
\label{sssec:DPP}


\begin{proof}[\textbf{Proof of \Cref{th:DPP}}]
    We only give the details for $V$ since the proof for $U$ is similar.

    Fix $T > 0$ and let us call $\tilde V(S_o, I_o)$ the right hand side
    of \eqref{eq:DPP}, i.e.
    \begin{equation*}
      \tilde V(S_o, I_o) \coloneqq
        \adjustlimits\inf_{\mathcal S_1 \in \Gamma} \sup_{u_2 \in \cU_2} 
        \left\{
          \displaystyle \int_{0}^{T} \e^{-\theta t} \ell\left(
           I_{\mathcal{S}_1,u_2}(t),\mathcal{S}_1(u_2)(t)
          , u_2(t)\right) \dd{t}
          + V\left(S_{\mathcal{S}_1,u_2}(T)), I_{\mathcal{S}_1,u_2}(T)\right) \e^{-\theta T}       
        \right\},
    \end{equation*}
    where the couple $(S_{\cS_1,u_2}, I_{\cS_1,u_2})$ denotes the solution
    to~\eqref{eq:VaxSystem} with initial datum $(S_o, I_o)$ and
    controls $\mathcal{S}_1(u_2)$ and $u_2$.

    \textbf{First we prove that} $V(S_o, I_o) \leq \tilde{V}(S_o, I_o)$. 
    
    Let $\eps > 0$ be fixed.
    Choose a strategy $\hat{\cS}_1 \in \Gamma$ such that 
    \begin{equation}
      \label{eq:proofDPP1a}
      \tilde{V}(S_o, I_o) + \eps \geq \sup_{u_2 \in \cU_2} 
      \left\{\int_0^T \e^{-\theta t} \ell
        \left( I_{\hat{\cS}_1,u_2}(t), \hat{\cS}_1(u_2)(t), u_2(t)\right) \dd{t} 
        + \e^{-\theta T} V\left(S_{\hat{\cS}_1,u_2}(T), I_{\hat{\cS}_1,u_2}(T)\right)\right\},
    \end{equation}
    where the couple $(S_{\hat{\cS}_1,u_2}, I_{\hat{\cS}_1,u_2})$ denotes the solution
    to~\eqref{eq:VaxSystem} with initial datum $(S_o, I_o)$ and
    controls $\mathcal{\hat S}_1(u_2)$ and $u_2$.


    Fix now $u_2 \in \cU_2$ arbitrarily.
    Using \eqref{eq:proofDPP1a}, we obtain that
    \begin{equation}
      \label{eq:11}
      \tilde{V}(S_o, I_o)
      \geq 
      \int_0^T \e^{-\theta t}
      \ell\left(
        I_{\hat{\cS}_1,u_2}(t), \hat{\cS}_1(u_2)(t), u_2(t)\right) \dd{t} 
      + \e^{-\theta T} 
      V\left(S_{\hat{\cS}_1,u_2}(T), I_{\hat{\cS}_1,u_2}(T)\right) 
      - \eps.
    \end{equation}
    Since, by \Cref{def:ValuesGame},
    \begin{equation*}
      V\left(S_{\hat{\cS}_1,u_2}(T), I_{\hat{\cS}_1,u_2}(T)\right) 
      = \adjustlimits\inf_{\mathcal{S}_1 \in \Gamma} 
      \sup_{\tilde u_2 \in \cU_2}
      \cF\left(S_{\hat{\cS}_1,u_2}(T), I_{\hat{\cS}_1,u_2}(T),
        \mathcal{S}_1(\tilde u_2), \tilde u_2\right),
    \end{equation*}
    then there exists $\check{\mathcal{S}}_1 \in \Gamma$ such that
    \begin{equation}
      \label{eq:V-new}
      V\left(S_{\hat{\cS}_1,u_2}(T), I_{\hat{\cS}_1,u_2}(T)\right)  + \eps \ge
      \cF\left(S_{\hat{\cS}_1,u_2}(T), I_{\hat{\cS}_1,u_2}(T), \check{\mathcal{S}}_1(u_2(\cdot + T)), u_2
      (\cdot + T)\right).
    \end{equation}
    Therefore, by~\eqref{eq:CostFunctional}, \eqref{eq:11},
    and~\eqref{eq:V-new},  we get
    \begin{align}
      \nonumber
      \tilde V\left(S_o, I_o\right)
       & \ge       \int_0^T \e^{-\theta t}
      \ell\left(I_{\hat{\cS}_1,u_2}(t),\hat{\cS}_1(u_2)(t), u_2(t)\right) \dd{t}
      \\
      \nonumber
      & \quad
        + \e^{-\theta T}
        \cF\left(S_{\hat{\cS}_1,u_2}(T), I_{\hat{\cS}_1,u_2}(T), \check{\mathcal{S}}_1 (u_2(\cdot + T)), u_2  (\cdot + T)\right) 
        - \eps \left(1 + \e^{\theta T}\right)
      \\
      \label{eq:estimate-tilde-V}
      & = \int_0^{+\infty} \e^{-\theta t}
        \ell\left(I_{\overline{\cS}_1,u_2}(t),\overline{\cS}_1(u_2)(t), u_2(t)\right) \dd{t}
        - \eps \left(1 + \e^{\theta T}\right),
    \end{align}
    where we use the non-anticipating strategy $\overline{\cS}_1\in \Gamma$,
    defined as
    \begin{equation}
      \label{eq:strategy-S1-bar}
      \begin{array}{rccl}
        \overline{\cS}_1:
        & \cU_2
        & \longrightarrow
        & \cU_2
        \\
        & u_2
        & \longmapsto
        & \overline{\cS}_1(u_2)(t) \coloneqq
          \left\{
          \begin{array}{ll}
            \hat{\cS}(u_2)(t),
            & \textrm{ if } t \in [0, T],
            \\
            \check{\cS}(u_2(\cdot + T))(t - T),
            & \textrm{ if } t > T.
          \end{array}
              \right.
      \end{array}
    \end{equation}
    Since $u_2$ is arbitrary, by~\eqref{eq:estimate-tilde-V} we deduce that
    \begin{align*}
      \nonumber
      \tilde V\left(S_o, I_o\right)
      & \ge \sup_{u_2 \in \cU_2}\left\{\int_0^{+\infty} \e^{-\theta t}
        \ell\left(I_{\overline{\cS}_1,u_2}(t),\overline{\cS}_1(u_2)(t), u_2(t)
        \right) \dd{t}
        \right\}
        - \eps \left(1 + \e^{\theta T}\right)
      \\
      & \ge V(S_o, I_o) -  \eps(1 + \e^{-\theta T}),
    \end{align*}
    proving that $\tilde V(S_o, I_o) \ge V(S_o, I_o)$ by the arbitrariness
    of $\eps>0$.
    \smallskip

    \textbf{We now prove that} $V(S_o, I_o) \geq \tilde{V}(S_o, I_o)$.

    Let $\eps > 0$ be fixed.
    Choose a strategy $\hat{\cS}_1 \in \Gamma$ such that 
    \begin{equation}
      \label{eq:proofDPP2a}
      V(S_o, I_o) \geq \sup_{u_2 \in \cU_2} 
      \int_0^{+\infty} \e^{-\theta t}
      \ell\left(I_{\hat{\cS}_1,u_2}(t), \hat{\cS}_1(u_2)(t), u_2(t)\right) \dd{t}
      - \eps,
    \end{equation}
    and then a control $\hat{u}_2 \in \cU_2$ such that 
    \begin{equation}
      \label{eq:proofDPP2b}
      \tilde{V}(S_o, I_o) \leq \int_0^{T} \e^{-\theta t} 
      \ell\left(I_{\hat{\cS}_1,\hat u_2}(t), \hat{\cS}_1(\hat{u}_2)(t),
        \hat{u}_2(t)\right) \dd{t} 
      + \e^{-\theta T} V\left(S_{\hat{\cS}_1,\hat u_2}(T),
        I_{\hat{\cS}_1,\hat u_2}(T)\right) + \eps,
    \end{equation}
    where the couple $(S_{\hat{\cS}_1,\hat u_2},I_{\hat{\cS}_1,\hat u_2})$
    denotes the solution to~\eqref{eq:VaxSystem} with initial datum
    $(S_o,I_o)$ and controls $\hat{\cS}_1(u_2),\hat u_2$.
    Define the map $\mathcal C: \cU_2 \to \cU_2$ such that,
    for all $u_2 \in \cU_2$,
    \begin{equation}
      \label{eq:12}
        \mathcal{C}(u_2)(t) \coloneqq 
        \left\{ 
        \begin{array}{ll}
            \hat{u}_2(t), & \textrm{ if } \; t \leq T, \\
            u_2(t-T), & \textrm{ if } \; t > T,
        \end{array}
        \right. 
    \end{equation}
    and define the non-anticipating strategy
    $\overline{S}_1 \in \Gamma$ by
    \begin{equation}
      \label{eq:proofDPP2c}
      \overline{\cS}_1(u_2)(t) \coloneqq 
      \hat{\cS}_1(\mathcal{C}({u}_2))(t + T),
    \end{equation}
    where $u_2 \in \cU_2$ and $ t > 0$.
    Moreover, fix a control $\check{u}_2 \in \cU_2$ such that
    \begin{equation}
    \label{eq:proofDPP2d}
    V\left(S_{\hat{\cS}_1,\hat u_2}(T),I_{\hat{\cS}_1,\hat u_2}(T)\right)
    \leq \int_0^{+\infty} \e^{-\theta t} 
    \ell\left(\tilde I(t), \overline{\cS}_1
    (\check{u}_2)(t), \check{u}_2(t)\right) \dd{t} + \eps,
  \end{equation}
  where $\left(\tilde S, \tilde I\right)$ is the solution
  to~\eqref{eq:VaxSystem} with initial datum
  $\left(S_{\hat{\cS}_1,\hat u_2}(T),I_{\hat{\cS}_1,\hat u_2}(T)\right)$
  and controls $\overline{\mathcal S}_1(\check u_2)$ and $\check u_2$.
  By construction, we easily deduce that
  \begin{equation}
    \label{eq:proofDPP2e} 
      \int_0^{+\infty} \e^{-\theta t}
      \ell\left(\tilde I(t), \overline{\cS}_1(\check{u}_2)(t), \check{u}_2(t)\right) \dd{t}
      =
       \int_T^{+\infty} \e^{-\theta (t-T)}
      \ell\left(I_{\hat{\cS}_1,\mathcal{C}(\check{u}_2)}(t), \hat{\cS}_1(\mathcal{C}(\check{u}_2))(t),
      \mathcal{C}(\check{u}_2)(t)\right) \dd{t}.
  \end{equation}
  Therefore, using~\eqref{eq:proofDPP2a}, \eqref{eq:proofDPP2b},
  \eqref{eq:12}, \eqref{eq:proofDPP2c}, \eqref{eq:proofDPP2d},
  and~\eqref{eq:proofDPP2e} we deduce that
  \begin{align*}
    \tilde{V}(S_o, I_o) 
    & \leq \int_0^{T} \e^{-\theta t} 
      \ell\left(I_{\hat{\cS}_1,\hat u_2}(t), \hat{\cS}_1(\hat{u}_2)(t), \hat{u}_2(t)\right) \dd{t} 
      + \e^{-\theta T} V\left(S_{\hat{\cS}_1,\hat u_2}(T),I_{\hat{\cS}_1,\hat u_2}(T)\right)
      + \eps
    \\
    & \leq \int_0^{T} \e^{-\theta t} 
      \ell\left(I_{\hat{\cS}_1,\hat u_2}(t), \hat{\cS}_1(\hat{u}_2)(t), \hat{u}_2(t)\right) \dd{t}
    \\
    & \quad + \e^{-\theta T} \int_0^{+\infty} \e^{-\theta t} 
      \ell\left(\tilde I(t), \overline{\cS}_1(\check{u}_2)(t), \check{u}_2(t)\right) \dd{t}
      + \eps\left(1 +\e^{-\theta T}\right)
    \\
    & \leq \int_0^{T} \e^{-\theta t} 
      \ell\left(I_{\hat{\cS}_1,\mathcal{C}(\check{u}_2)}(t),
      \hat{\cS}_1(\mathcal C(\check{u}_2))(t),
      \mathcal C(\check{u}_2)(t)\right) \dd{t}
    \\
    & \quad + \e^{-\theta T} \int_T^{+\infty} \e^{-\theta (t-T)} 
      \ell\left(I_{\hat{\cS}_1, \mathcal{C}(\check{u}_2)}(t), \hat{\cS}_1(\mathcal C(\check{u}_2))(t),
      \mathcal C(\check{u}_2)(t)\right) \dd{t}
      + \eps\left(1 +\e^{-\theta T}\right)
    \\ 
    & = \int_0^{+\infty} \e^{-\theta t} 
     { \ell\left(I_{\hat{\cS}_1,\mathcal{C}(\check{u}_2)}(t), \hat{\cS}_1(\mathcal C(\check{u}_2))(t),
      \mathcal C(\check{u}_2)(t)\right) \dd{t} }
      + \eps\left(1 +\e^{-\theta T}\right)
    \\
    & \leq \sup_{u_2 \in \cU_2}
      \left\{\int_0^{+\infty} \e^{-\theta t} 
      \ell\left(I_{\hat{\cS}_1, u_2}(t), \hat{\cS}_1({u}_2)(t), {u}_2(t)\right) \dd{t} \right\} 
      + \eps\left(1 +\e^{-\theta T}\right)
    \\
    & \leq V(S_o, I_o) + \eps \left(2 + \e^{-\theta T}\right)\; .  
  \end{align*}
    Take the limit as $\eps \to 0^+$ to the obtain the second inequality, which concludes the proof.
\end{proof}

\subsubsection{Hamilton-Jacobi equations}

First we deduce the regularity of the Hamiltonian functions.

\begin{lemma}
\label{lmm:ContinuityHamiltonian1}
    Set $\E \coloneqq \HH{1}((0, 1); \R) \times \R \times \LL2((0,1);\R) \times \R$. 
    Assume~\ref{hyp:(alpha)}-\ref{hyp:(F)}-\ref{hyp:(G)}. Let $\underline{\cH}$ and
    $\overline{\cH}$ be defined by~\eqref{eq:Hamiltonian1}
    and~\eqref{eq:Hamiltonian2}. 
    Then $\underline{\cH}, \overline{\cH} \in \Czero(\E; \R)$.
\end{lemma}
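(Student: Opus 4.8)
The plan is to first prove that the pre-Hamiltonian $\cH$ is jointly continuous on $\E\times[0,M_1]\times[0,M_2]$ — it is well defined and finite there, since for $S\in\HH1((0,1);\R)$ hypothesis \ref{hyp:(G)} gives $g(\cdot,u_1,u_2)\in\Ck2([0,1];\R)$, hence $g(\cdot,u_1,u_2)S\in\HH1$ and $\p_\xi(g(\cdot,u_1,u_2)S)\in\LL2$, so $\langle\p_\xi(gS),p\rangle_{\LL2}$ makes sense for $p\in\LL2$, while the remaining terms involve only $f$, the scalar $I$, the integral $\int_0^1 S\,\dd\xi$ and the controls — and then to deduce continuity of $\underline{\cH}$ and $\overline{\cH}$ on $\E$ as follows. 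Once joint continuity of $\cH$ is known, the inner maximum and outer minimum in \eqref{eq:Hamiltonian1} (and likewise in \eqref{eq:Hamiltonian2}) are attained because $[0,M_1]$ and $[0,M_2]$ are compact, and continuity of $\underline{\cH},\overline{\cH}$ follows from the elementary fact: if $X$ is a metric space, $K$ a compact metric space and $F\in\Czero(X\times K;\R)$, then $x\mapsto\min_{k\in K}F(x,k)$ and $x\mapsto\max_{k\in K}F(x,k)$ lie in $\Czero(X;\R)$; one applies it with $X=\E\times[0,M_2]$, $K=[0,M_1]$ to get continuity of $(\bY,\bP,u_2)\mapsto\max_{u_1}\cH$, then with $X=\E$, $K=[0,M_2]$. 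I would prove this fact directly (fix $x_0$ and $\eps>0$; cover $K$ by finitely many neighbourhoods $U_{k_i}$ on which $F$ oscillates by less than $\eps$ over balls $B(x_0,\delta_{k_i})$; set $\delta=\min_i\delta_{k_i}$; treat the reverse inequality with a maximiser of $F(x_0,\cdot)$), observing that it does not rely on local compactness of $X$ — which is crucial here since $\E$ is infinite-dimensional.

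It then remains to prove the joint continuity of $\cH$, which I would do term by term in \eqref{eq:pre-Ham}, using throughout the one-dimensional Sobolev embedding $\HH1((0,1))\hookrightarrow\Czero([0,1])$, so that $S_n\to S$ in $\HH1$ entails $S_n\to S$ uniformly. The term $\langle(f+\alpha(I))S,p\rangle_{\LL2}$ is continuous because $f$ is fixed, $I\mapsto\alpha(I)$ is continuous by \ref{hyp:(alpha)}, and $(f+\alpha(I_n))S_np_n\to(f+\alpha(I))Sp$ in $\LL1$ whenever $S_n\to S$ in $\LL\infty$ and $p_n\to p$ in $\LL2$. The factor $\beta I-\alpha(I)\int_0^1 S\,\dd\xi$ depends continuously on $(I,S)\in\R\times\LL2$ — since $S\mapsto\int_0^1 S\,\dd\xi$ is continuous from $\LL2$ to $\R$ and $\alpha$ is continuous — so its product with the continuous scalar $q$ is jointly continuous. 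And $\ell(I,u_1,u_2)$ is continuous because $I\mapsto\kappa I$ is continuous and $(u_1,u_2)\mapsto\int_0^1 g(\xi,u_1,u_2)\,\dd\xi$ is continuous by dominated convergence, $g$ being continuous and bounded on the compact box $[0,1]\times[0,M_1]\times[0,M_2]$.

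The only genuinely delicate term is $\langle\p_\xi(gS),p\rangle_{\LL2}$, and it is here that the choice $S\in\HH1$, $p\in\LL2$ matters: since $p$ is merely $\LL2$ one cannot integrate by parts, so one must control $\p_\xi(gS)$ directly in $\LL2$. Writing $\p_\xi(g(\cdot,u_1,u_2)S)=\p_\xi g(\cdot,u_1,u_2)\,S+g(\cdot,u_1,u_2)\,\p_\xi S$, the second summand converges in $\LL2$ because $\p_\xi S_n\to\p_\xi S$ in $\LL2$ and $g(\cdot,u_1^n,u_2^n)\to g(\cdot,u_1,u_2)$ uniformly (uniform continuity of $g$ on the compact box); for the first summand one uses $S_n\to S$ in $\LL\infty$, the bound $\norm{\p_\xi g}_{\LL\infty}\le\gamma$ and the uniform continuity of $\p_\xi g$ on the compact box guaranteed by \ref{hyp:(G)} to get $\p_\xi g(\cdot,u_1^n,u_2^n)\,S_n\to\p_\xi g(\cdot,u_1,u_2)\,S$ in $\LL2$. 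Hence $\p_\xi(g(\cdot,u_1^n,u_2^n)S_n)\to\p_\xi(g(\cdot,u_1,u_2)S)$ in $\LL2$, and pairing with $p_n\to p$ in $\LL2$ yields convergence of this last term, completing the joint continuity of $\cH$ and hence the proof. I expect this final step — reconciling the $(u_1,u_2)$-dependence of $\p_\xi g$ with the fact that the dual variable $p$ lives only in $\LL2$ — to be the main obstacle; the remaining estimates are routine bookkeeping.
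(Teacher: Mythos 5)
Your route is genuinely different from the paper's. The paper fixes $(u_1,u_2)$ and shows that $\cH(\cdot,\cdot,u_1,u_2)$ is Lipschitz on bounded subsets of $\E$, with a constant controlled by $\gamma$, $\sup|g|$, $\Lip(\alpha)$, $\beta$, $\kappa$ and the radius of the bounded set, hence uniform over the compact control box; continuity of $\underline{\cH}$ and $\overline{\cH}$ then follows because the pointwise $\min$/$\max$ of an equi-Lipschitz family is again Lipschitz on bounded sets --- no attainment of the extrema and no continuity of $\cH$ in the controls is ever needed. You instead prove joint continuity of $\cH$ on $\E\times[0,M_1]\times[0,M_2]$ and invoke a Berge-type maximum theorem; your remark that this theorem does not require local compactness of $X$ is correct, your sketch of its proof is sound, and the two-stage application (inner $\max$, then outer $\min$) is fine. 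Your approach is conceptually cleaner but demands strictly more from $\cH$, and that is where the one real problem sits.

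The problem is the claim that \ref{hyp:(G)} guarantees uniform continuity of $\p_\xi g$ on the compact box. It does not: \ref{hyp:(G)} gives joint continuity of $g$, $\Ck{2}$ regularity in $\xi$ for each \emph{fixed} $(u,v)$, and the uniform bound $\abs{\p_\xi g}\le\gamma$, but says nothing about continuity of $\p_\xi g$ in $(u,v)$. For instance $g(\xi,u)=u\,\chi(\xi)\sin(\xi/u)$ for $u>0$ and $g(\xi,0)=0$, with $\chi\in\Cc{2}\left((0,1);\R\right)$, satisfies all of \ref{hyp:(G)} while $\p_\xi g$ is discontinuous at $u=0$; so the asserted strong $\LL2$ convergence $\p_\xi g(\cdot,u_1^n,u_2^n)S_n\to\p_\xi g(\cdot,u_1,u_2)S$ fails as justified. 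The step is repairable: uniform convergence $g(\cdot,u_1^n,u_2^n)\to g(\cdot,u_1,u_2)$ together with the bound $\gamma$ forces $\p_\xi g(\cdot,u_1^n,u_2^n)\rightharpoonup\p_\xi g(\cdot,u_1,u_2)$ weakly in $\LL2$, and since $S_np_n\to Sp$ strongly in $\LL2$ (by the Sobolev embedding) the pairing still converges; alternatively your argument is verbatim correct under the stronger hypothesis \ref{hyp:(G-1)}. Note that the paper's equi-Lipschitz route never meets this issue, since it only uses the bound $\gamma$ and never any convergence of $\p_\xi g$ in the controls. The remaining terms of your term-by-term analysis are handled correctly.
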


\begin{proof}
    It suffices to check that for all $u_1 \in [0, M_1]$ and $u_2 \in [0, M_2]$, the 
    pre-Hamiltonian $\cH$, defined by \eqref{eq:pre-Ham}, is continuous with respect to its 
    first two variables. To show it, we prove that $\cH$ is Lipschitz continuous on bounded 
    subsets of $\E$. Let $\Omega$ be a bounded subset of $\E$ and let $c \in \R^+$ such that
    \[
        \forall (S, I, p, q) \in \Omega, \quad 
        \max\{\|S\|_{\HH{1}}, |I|, \|p\|_{\LL2}, |q|\} \leq c.
    \]
    Let $u_1 \in [0, M_1]$ and $u_2 \in [0, M_2]$. For all $(\bY_1, \bP_1), (\bY_2, \bP_2) \in \Omega$, we have 
    \begin{align*}
        & |\cH(\bY_2, \bP_2, u_1, u_2) - \cH(\bY_1, \bP_1, u_1, u_2)|
        \\
        \leq &
        |\langle\p_\xi (gS_2) + (f + \alpha(I_2)) S_2 , p_2 - p_1 \rangle_{\LL{2}}|
        + |\langle\p_\xi (gS_2 - gS_1) + f(S_2 - S_1), p_1\rangle_{\LL{2}}| 
        \\
        & + |\langle \alpha(I_2) (S_2 - S_1), p_1\rangle_{\LL{2}}| 
        + |\langle (\alpha(I_2) - \alpha(I_1)) S_1, p_1\rangle_{\LL{2}}| 
        \\
        & + \beta |I_2 - I_1| \cdot |q_1| + \beta |I_1| \cdot |q_2 - q_1| 
        +  \left| \alpha(I_2) q_2 \int_0^1 (S_2 (\xi) - S_1(\xi)) \dd \xi \right|
        \\
        & + \alpha(I_2) |q_2 - q_1| \cdot \left| \int_0^1 S_1 (\xi) \dd \xi \right|
        + \left|\alpha(I_2) -\alpha(I_1) \right| \cdot \left| q_1 \int_0^1 S_1 (\xi) \dd \xi \right| 
        + |\ell(I_2, u_1,u_2) - \ell(I_1, u_1,u_2)| 
        \\
        \leq & \left( \gamma + \sup_{\xi \in [0, 1]} |g(\xi, u_1, u_2)| + 1 + 
        \sup_{I \leq c} \alpha(I) \right) \|S_2\|_{\HH{1}} \|p_2 - p_1\|_{\LL{2}}
        \\
        & + \left( \gamma + \sup_{\xi \in [0, 1]} |g(\xi, u_1, u_2)| + 1 \right) \|S_2 - S_1\|_{\HH{1}} \|p_1\|_{\LL{2}} 
        + \sup_{I \leq c} \alpha(I) \|S_2 - S_1\|_{\LL{2}} \|p_1\|_{\LL{2}} 
        \\
        & + \Lip(\alpha) |I_2 - I_1| \cdot \|S_1\|_{\LL{2}} \|p_1\|_{\LL{2}} 
        + \beta |I_2 - I_1| \cdot |q_1| + \beta |I_1| \cdot |q_2 - q_1| 
        + \sup_{I \leq c} \alpha(I) |q_2| \cdot \|S_2 - S_1\|_{\LL{2}} 
        \\
        & + \sup_{I \leq c} \alpha(I) |q_2 - q_1| \cdot \|S_1\|_{\LL{2}} 
        + \Lip(\alpha) |I_2 - I_1| \cdot |q_1| \cdot \|S_1\|_{\LL{2}} + 
        \kappa |I_2 - I_1| 
        \\
        \leq 
        & C \max\{\|S_2 - S_1\|_{\HH{1}}, |I_2 - I_1|, \|p_2 - p_1 \|_{\LL2}, 
        |q_2 - q_1|\},
    \end{align*}
    concluding the proof.
\end{proof}

We now tackle the proof of~\Cref{th:ValueViscosity1}, which is a direct 
adaptation of~\cite[Theorem 1.10, Chapter VIII]{BCD1997} to our infinite dimensional setting. The following two results are of use below.

\begin{lemma}
    \label{lmm:ChainRule}
    Assume~\ref{hyp:(alpha)}-\ref{hyp:(F)}-\ref{hyp:(G)}. 
    Let $u_1 \in \cU_1$, $u_2 \in \cU_2$ and 
    $\mathbf{Y}_o = (S_o, I_o) \in \HH{1}((0, 1); \R) \times \R$. Denote by 
    $\mathbf{Y} = (S, I)$ be the solution of~\eqref{eq:VaxSystem} with initial datum 
    $\mathbf{Y}_o$ and controls $u_1$, $u_2$. Then, for all 
    $\phi \in \Ck{1}(\LL{2}((0, 1); \R) \times \R; \R)$
    and for all $t > 0$,
    \begin{align*}
        \frac{\dd{}}{\dd{t}} \phi(\mathbf{Y}(t)) 
        & = -\scal{\nabla_1 \phi (\mathbf{Y}(t))}{\p_\xi (gS(t)) + (f(\xi) + \alpha(I(t)))S(t)}_{\LL{2}} 
        \\
        & \quad + \nabla_2 \phi (\mathbf{Y}(t)) 
        \left( -\beta I(t) + \alpha(I(t)) \int_0^1 S(t, \xi)\dd\xi \right),
    \end{align*}
    where we denoted by $\nabla_1 \phi$  and $\nabla_2 \phi$
    the gradient w.r.t.~the first variable of $\phi$
    and, respectively, w.r.t.~the second variable of $\phi$.
\end{lemma}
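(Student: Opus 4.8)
The plan is to compute the derivative $\frac{\mathrm{d}}{\mathrm{d}t}\phi(\mathbf{Y}(t))$ directly from the definition of the Gâteaux/Fréchet derivative of $\phi \in \Ck{1}(\LL2((0,1);\R)\times\R;\R)$, using the fact that $t \mapsto \mathbf{Y}(t) = (S(t),I(t))$ is a curve in $\LL2((0,1);\R)\times\R$ that is differentiable in an appropriate sense. First I would isolate the two components: $I$ is $\Ck1$ in time with $\dot I(t) = -\beta I(t) + \alpha(I(t))\int_0^1 S(t,\xi)\,\mathrm{d}\xi$ by \Cref{lmm:WellPosedness3} and equation~\eqref{eq:2}, and $S$ solves the PDE~\eqref{eq:VaxPDE}; since $S_o \in \HH1$, by~\Cref{th:WellPosedness} we have $S(t) \in \HH1$ for all $t$, so $\p_\xi(g(\cdot,u_1(t),u_2(t))S(t)) \in \LL2$ and the quantity $\p_t S(t) = -\p_\xi(gS(t)) - (f+\alpha(I(t)))S(t)$ makes sense as an element of $\LL2$. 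The key intermediate claim is that $t \mapsto S(t)$ is differentiable as a map into $\LL2((0,1);\R)$ with this derivative, at least for a.e.~$t$ (or after mollification), which is where the regularity $S_o\in\HH1$ is genuinely used.

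Then I would apply the chain rule in Banach spaces: if $\phi$ is $\Ck1$ on $\LL2\times\R$ and $t\mapsto\mathbf{Y}(t)$ is differentiable into $\LL2\times\R$ with derivative $\dot{\mathbf{Y}}(t) = (\p_t S(t), \dot I(t))$, then
\[
  \frac{\mathrm{d}}{\mathrm{d}t}\phi(\mathbf{Y}(t)) = \scal{\nabla_1\phi(\mathbf{Y}(t))}{\p_t S(t)}_{\LL2} + \nabla_2\phi(\mathbf{Y}(t))\,\dot I(t).
\]
Substituting $\p_t S(t) = -\p_\xi(gS(t)) - (f+\alpha(I(t)))S(t)$ and $\dot I(t) = -\beta I(t) + \alpha(I(t))\int_0^1 S(t,\xi)\,\mathrm{d}\xi$ yields exactly the stated formula. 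To make the differentiability of $t\mapsto S(t)$ into $\LL2$ rigorous, I would either argue via the characteristics representation~\eqref{eq:characteristics-formula} — differentiating under the integral sign using that $S_o\in\HH1$ controls the difference quotients in $\LL2$ — or approximate $S_o$ by smooth data $S_o^{(n)}$, obtain the identity for the classical solutions $S_n$ (where everything is pointwise smooth), and pass to the limit using the $\LL2$-stability estimate~\eqref{eq:WellPosedness-L2} together with the continuity of $\nabla\phi$ on bounded sets. A small technical point is that $\phi$ is only defined on $\LL2\times\R$, not on $\HH1\times\R$, so $\nabla_1\phi(\mathbf{Y}(t))$ lives in the dual of $\LL2$, i.e.~in $\LL2$, and the pairing $\scal{\nabla_1\phi}{\p_\xi(gS)}_{\LL2}$ is a genuine $\LL2$ inner product — no integration by parts is needed, which is precisely why the $\HH1$-regularity of $S$ is the right hypothesis.

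The main obstacle I expect is establishing that $t\mapsto S(t)$ is strongly differentiable in $\LL2$ with the claimed derivative, rather than merely weakly or in a distributional sense: the difference quotient $\frac{S(t+h)-S(t)}{h}$ must converge in $\LL2$-norm to $-\p_\xi(gS(t)) - (f+\alpha(I))S(t)$. I would handle this through the characteristics formula, writing $S(t+h,\xi) - S(t,\xi)$ in terms of the flow map $X$ and the exponential weight, Taylor-expanding both, and using that translations of $\HH1$ functions converge in $\LL2$ at rate $O(h)$ (i.e.~$\|S_o(\cdot + \delta) - S_o\|_{\LL2} \le |\delta|\,\|S_o'\|_{\LL2}$) — this is the only place where $S_o\in\HH1$ (as opposed to merely $\LL2$) is essential. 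Once that convergence is secured, the rest is a routine application of the Banach-space chain rule and algebraic substitution.
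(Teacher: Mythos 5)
The paper itself gives no argument for this lemma---it states only that ``the proof is standard and so we omit it''---so there is nothing to compare against line by line; your plan is a correct reconstruction of exactly the standard argument the authors have in mind: establish strong $\LL2$-differentiability of $t\mapsto S(t)$ using the characteristics representation and the fact that $S_o\in\HH1$ controls difference quotients of compositions in $\LL2$, then apply the Banach-space chain rule and substitute the two evolution equations. You also correctly identify the two genuinely delicate points, namely that $\HH1$-regularity (not mere $\LL2$) is what makes $\p_\xi(gS)$ an honest $\LL2$ element so that the pairing with $\nabla_1\phi\in\LL2$ needs no integration by parts, and that with controls only in $\LL\infty$ the time-derivative of the flow, hence of $S(t)$, exists only for a.e.~$t$---a caveat the lemma's ``for all $t>0$'' glosses over. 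The one place where your sketch is slightly optimistic is the estimate $\norm{S_o(\cdot+\delta)-S_o}_{\LL2}\le\abs{\delta}\norm{S_o'}_{\LL2}$: what actually appears is a composition with two nearby flow maps rather than a pure translation, so you additionally need the uniform Jacobian bounds coming from $\abs{\p_\xi g}\le\gamma$ in \ref{hyp:(G)} to convert that into the same $O(h)$ rate; this is available and routine, but should be said.
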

The proof is standard and so we omit it.

\begin{lemma}{\cite[Lemma~1.11, Chapter~VIII]{BCD1997}.}
    \label{lmm:ValueViscosity1}
    Let $\underline{\cH}$ be defined by~\eqref{eq:Hamiltonian1}. Let 
    $\mathbf{Y}_o \in \HH{1}((0, 1); \R) \times \R$ and 
    $\phi \in \Ck{1}(\LL{2}((0, 1); \R) \times \R)$ be such that 
    \[
        \nu \coloneqq \theta \phi(\mathbf{Y}_o) + \underline{\cH}(\mathbf{Y}_o, \nabla \phi (\mathbf{Y}_o)) > 0.
    \]
    Then, there exists $\cS_1^* \in \Gamma$ and $\tau > 0$ such that for all 
    $u_2 \in \cU_2$ and for all $t \in (0, \tau)$, 
    \[
        \int_0^t \ell(I(s), \cS_1^*(u_2), u_2) \e^{-\theta s} \dd{s} 
         + \phi(\mathbf{Y}(t)) \e^{-\theta t} - \phi(\mathbf{Y}_o)\leq -\frac{\nu t}{4},
    \]
    where $\mathbf{Y} = (S, I)$ is the solution of~\eqref{eq:VaxSystem} with the initial datum $\mathbf{Y}_o$ and controls $(\cS_1^*(u_2), u_2))$.
\end{lemma}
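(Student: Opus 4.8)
The plan is to adapt the classical argument of \cite[Lemma~1.11, Chapter~VIII]{BCD1997}, the one genuinely new difficulty being that the threshold $\tau$ must be chosen uniformly in the opponent's control $u_2$, while the pre-Hamiltonian $\cH$ of~\eqref{eq:pre-Ham} is only continuous on the space $\E$ of~\Cref{lmm:ContinuityHamiltonian1}, where the state lives in $\HH1$ and the costate in $\LL2$. First I would build $\cS_1^*$ as a \emph{memoryless best response} of the first player, computed once and for all at the frozen point $\mathbf{Y}_o$. Since $\mathbf{Y}_o\in\HH1((0,1);\R)\times\R$ and $\nabla\phi(\mathbf{Y}_o)\in\LL2((0,1);\R)\times\R$, the map $(u_1,u_2)\mapsto\cH(\mathbf{Y}_o,\nabla\phi(\mathbf{Y}_o),u_1,u_2)$ is continuous on the compact square $[0,M_1]\times[0,M_2]$ by~\Cref{lmm:ContinuityHamiltonian1}, so for each $v\in[0,M_2]$ I may select a maximiser $w(v)\in[0,M_1]$ of $u_1\mapsto\cH(\mathbf{Y}_o,\nabla\phi(\mathbf{Y}_o),u_1,v)$ — for instance the largest one, which makes $v\mapsto w(v)$ upper semicontinuous, hence Borel. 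Then $\cS_1^*(u_2)\coloneqq w\circ u_2$ belongs to $\cU_1$, and $\cS_1^*$ is non-anticipating (indeed memoryless), so $\cS_1^*\in\Gamma$. By the very definition~\eqref{eq:Hamiltonian1} of $\underline{\cH}$, this yields, for every $u_2\in\cU_2$ and a.e.~$s>0$,
\begin{equation*}
  \theta\phi(\mathbf{Y}_o)+\cH\bigl(\mathbf{Y}_o,\nabla\phi(\mathbf{Y}_o),\cS_1^*(u_2)(s),u_2(s)\bigr)
  =\theta\phi(\mathbf{Y}_o)+\max_{u_1\in[0,M_1]}\cH\bigl(\mathbf{Y}_o,\nabla\phi(\mathbf{Y}_o),u_1,u_2(s)\bigr)
  \geq\nu.
\end{equation*}

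Next I would fix $u_2\in\cU_2$, denote by $\mathbf{Y}=(S,I)$ the solution of~\eqref{eq:VaxSystem} with initial datum $\mathbf{Y}_o$ and controls $(\cS_1^*(u_2),u_2)$, and apply the chain rule~\Cref{lmm:ChainRule} to
\begin{equation*}
  \psi(t)\coloneqq\phi(\mathbf{Y}(t))\,\e^{-\theta t}+\int_0^t\ell\bigl(I(s),\cS_1^*(u_2)(s),u_2(s)\bigr)\,\e^{-\theta s}\dd s .
\end{equation*}
Comparing the resulting expression for $\frac{\dd{}}{\dd t}\phi(\mathbf{Y}(t))$ with the definition~\eqref{eq:pre-Ham} of $\cH$ gives, for a.e.~$t>0$,
\begin{equation*}
  \psi'(t)=-\e^{-\theta t}\Bigl[\theta\phi(\mathbf{Y}(t))+\cH\bigl(\mathbf{Y}(t),\nabla\phi(\mathbf{Y}(t)),\cS_1^*(u_2)(t),u_2(t)\bigr)\Bigr],
\end{equation*}
while $\psi(0)=\phi(\mathbf{Y}_o)$, so that $\psi(t)-\psi(0)$ is exactly the left-hand side of the asserted inequality. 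Since $S_o\in\HH1$, \Cref{th:WellPosedness} gives $S(t)\in\HH1$, and differentiating the characteristics formula of~\Cref{rmk:characteristics-representation} in $\xi$ and arguing as in the proof of~\Cref{lmm:WellPosedness1} shows that $t\mapsto S(t)$ is continuous with values in $\HH1$ near $t=0$, with $\sup_{t\in[0,1]}\|S(t)\|_{\HH1}$ bounded by a quantity depending only on $\|S_o\|_{\HH1}$, $I_o$, $\gamma$ and the $\Ck2$-bounds of $g$ from~\ref{hyp:(G)} — hence uniformly in $u_2$. It therefore remains to produce $\tau>0$, independent of $u_2$, such that $\psi'(t)\leq-\nu/4$ on $(0,\tau)$, that is, $\theta\phi(\mathbf{Y}(t))+\cH(\mathbf{Y}(t),\nabla\phi(\mathbf{Y}(t)),\cS_1^*(u_2)(t),u_2(t))\geq\nu/2$ together with $\e^{-\theta t}\geq1/2$ there.

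Finally I would transfer the strict inequality $\nu>0$ from $\mathbf{Y}_o$ to the trajectory: from the first step, $\theta\phi(\mathbf{Y}(t))+\cH(\mathbf{Y}(t),\nabla\phi(\mathbf{Y}(t)),\cS_1^*(u_2)(t),u_2(t))\geq\nu-\omega(t)$ with
\begin{equation*}
  \omega(t)\leq\sup_{\substack{u_1\in[0,M_1]\\ v\in[0,M_2]}}\Bigl[\theta\,|\phi(\mathbf{Y}(t))-\phi(\mathbf{Y}_o)|+\bigl|\cH(\mathbf{Y}(t),\nabla\phi(\mathbf{Y}(t)),u_1,v)-\cH(\mathbf{Y}_o,\nabla\phi(\mathbf{Y}_o),u_1,v)\bigr|\Bigr].
\end{equation*}
I would bound this supremum by the Lipschitz estimate of~\Cref{lmm:ContinuityHamiltonian1}, whose constant is uniform in $(u_1,u_2)$ as soon as the states are confined to a fixed bounded subset of $\E$; by~\Cref{th:WellPosedness} (bounds on $\|S(t)\|_{\LL2}$, $\|S(t)\|_{\HH1}$ and $|I(t)|$), the continuity of $\nabla\phi$, and the $\LL2$- and $\HH1$-continuity of $t\mapsto(S(t),I(t))$ at $t=0$ — all uniform in $u_2$ — one gets $\omega(t)\to0$ as $t\to0^+$ uniformly over $u_2\in\cU_2$. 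Choosing $\tau$ so that $\omega(t)\leq\nu/2$ and $\e^{-\theta t}\geq1/2$ on $(0,\tau)$ then gives $\psi'(t)\leq-\nu/4$, and integrating over $(0,t)$ yields $\psi(t)-\psi(0)\leq-\nu t/4$, which is the claim.

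The main obstacle is precisely this uniformity: $\tau$ must not depend on $u_2$, so every a priori bound on $(S,I)$ and every modulus of continuity of $\cH$ along the trajectory has to be uniform over $\cU_2$; this is where the memoryless form of $\cS_1^*$ (its values always live in the fixed compact $[0,M_1]$) and the control-independent constants in~\ref{hyp:(G)} and~\Cref{th:WellPosedness} are essential. A subsidiary point worth isolating is the $\HH1$-continuity and $\HH1$-boundedness of $t\mapsto S(t)$ near $t=0$, needed because the Lipschitz estimate in~\Cref{lmm:ContinuityHamiltonian1} measures the $S$-slot in the $\HH1$-norm; it follows from differentiating the characteristics formula in $\xi$, exactly as in~\Cref{lmm:WellPosedness1}.
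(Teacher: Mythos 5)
Your argument is correct and follows precisely the route the paper intends: the paper offers no proof of this lemma beyond the remark that it is ``exactly the same'' as \cite[Lemma~1.11, Chapter~VIII]{BCD1997}, and your memoryless selection $\cS_1^* = w\circ u_2$, the identity $\psi'(t) = -\e^{-\theta t}\bigl[\theta\phi(\mathbf{Y}(t)) + \cH(\mathbf{Y}(t),\nabla\phi(\mathbf{Y}(t)),\cS_1^*(u_2)(t),u_2(t))\bigr]$ obtained from \Cref{lmm:ChainRule}, and the choice of $\tau$ uniform in $u_2$ via the a priori bounds of \Cref{th:WellPosedness} and the Lipschitz estimate of \Cref{lmm:ContinuityHamiltonian1} are exactly that adaptation. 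You in fact supply more detail than the source, correctly isolating the two points where the infinite-dimensional setting matters (Borel measurability of the largest-maximizer selection, and the uniform-in-$u_2$ $\HH1$-boundedness and $\HH1$-continuity of $t\mapsto S(t)$ at $t=0$ needed to invoke the local Lipschitz estimate for $\cH$).
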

The proof is exactly the same of that of \cite[Lemma~1.11, Chapter~VIII]{BCD1997} even if the domain here is infinite dimensional.

\begin{proof}[\textbf{Proof of~\Cref{th:ValueViscosity1}}]
    We only prove the first item of~\Cref{th:ValueViscosity1},
    the second one being completely similar.
    
    \textbf{Regularity.} $V \in \Czero(\HH{1}((0, 1); \R) \times \R; \R)$. 
    It suffices to prove that for all $\cS_1 \in \Gamma$ and for all $u_2 \in \cU_2$, 
    $\cF$ defined by~\eqref{eq:CostFunctional} is continuous with respect to its first 
    two arguments. 
    Fix $(\overline{S}_o, \overline{I}_o) \in \LL{2}((0,1); \R) \times \R^+$ and
    consider a sequence $(S_o^n, I_o^n)$ converging to 
    $(\overline{S}_o, \overline{I}_o)$ in $\LL{2}((0,1); \R) \times \R^+$.
    We claim that
    \begin{equation*}
        \lim_{n \to + \infty} \mathcal{F}(S_o^n, I_o^n, \mathcal{S}_1(u_2), u_2)
        = \mathcal{F}(\overline{S}_o, \overline{I}_o, \mathcal{S}_1(u_2), u_2).
    \end{equation*}
    Denote by $(S^n, I^n)$, resp. $(\overline{S}, \overline{I})$, the
    solution to~\eqref{eq:VaxSystem} with initial datum 
    $(S_o^n, I_o^n)$, resp. $(\overline{S}_o, \overline{I}_o)$, and controls $\cS_1(u_2), u_2$. 
    By~\eqref{eq:CostFunctionalBis1}, it is sufficient to prove that
    \begin{equation}
        \label{eq:convergence-V-cont}
        \lim_{n \to + \infty} \int_0^{+\infty} e^{-\theta t} I^n(t) \dd t
        = \int_0^{+\infty} e^{-\theta t} \overline{I}(t) \dd t.
    \end{equation}
    First note that~\Cref{th:Stability} implies that the sequence $I^n$ converges pointwise to $\overline{I}$.
    
    Fix now an arbitrary subsequence $I^{n_h}$.
    By~\Cref{th:WellPosedness} we deduce that there exists
    $\bar n \in \N$ such that, for every $n \ge \bar n$ and $t > 0$,
    \begin{equation*}
        0 \le I^n(t) \le \norm{S_o^n}_{\LL1((0,1); \R)} + I_o^n
        \le \norm{S_o^n}_{\LL2((0,1); \R)} + I_o^n
        \le \norm{\overline{S}_o}_{\LL2((0,1); \R)} + \overline{I}_o + 1.
    \end{equation*}
    Therefore we deduce the existence of an additional subsequence, say $I^{n_{h_k}}$,
    and a function $\tilde I$ such that $I^{n_{h_k}}$ converges to $\tilde I$
    in the weak$^*$ topology of $\LL\infty((0, +\infty); \R)$.
    Let $\varphi \in \LL1((0, +\infty); \R)$; then
    \begin{equation*}
        \int_0^{+\infty} \varphi(t) \tilde I(t) \dd t
        = \lim_{k \to +\infty} \int_0^{+\infty} \varphi(t) I^{n_{h_k}}(t) \dd t
        = \int_0^{+\infty} \varphi(t) \overline{I}(t) \dd t,
    \end{equation*}
    where the last equality is due to the Lebesgue Dominated Convergence Theorem.
    This implies, by the arbitrariness of $\varphi$, that $\overline{I}(t) = \tilde I(t)$
    for a.e.~$t > 0$.
    The Uryshon subsequence principle permits to deduce that the whole sequence
    $I^{n}(t)$ weakly$^*$ converges to
    $\overline{I}(t)$ in 
    $\LL\infty((0,+\infty);\R)$.
    Therefore~\eqref{eq:convergence-V-cont} holds, concluding the proof of the continuity of $V$.
    
    \textbf{Sub-solution.} Let $\phi \in \Ck{1}(\LL{2}((0, 1); \R) \times \R; \R)$ 
    and $\mathbf{Y}_o \in \HH{1}((0, 1); \R) \times \R$ a local maximum point of $V - \phi$. Without loss of generality, we can assume that $V(\mathbf{Y}_o) - \phi(\mathbf{Y}_o) = 0$. We want to show that 
    \[
        \theta V(\mathbf{Y}_o) + \underline{\cH}(\mathbf{Y}_o, \nabla \phi (\mathbf{Y}_o)) \leq 0.
    \]
    Suppose instead that $\nu \coloneqq \theta V(\mathbf{Y}_o) + \underline{\cH}(\mathbf{Y}_o, \nabla \phi (\mathbf{Y}_o)) > 0$. Use~\Cref{lmm:ValueViscosity1} to find $\cS_1^* \in \Gamma$ and $\tau > 0$ 
    such that for all controls $u_2 \in \cU_2$ and for all $t \in (0, \tau)$,
    \begin{equation}
        \label{eq:ProofSubSolution}
        \int_0^t \ell(I(s), \cS_1^*(u_2), u_2) \e^{-\theta s} \dd{s} 
         + \phi(\mathbf{Y}(t)) \e^{-\theta t} - \phi(\mathbf{Y}_o)
         \leq -\frac{\nu t}{4},
    \end{equation}
    where $\mathbf{Y} = (S, I)$ is the solution of \eqref{eq:VaxSystem} with initial datum 
    $\mathbf{Y}_o$ and controls $(\cS_1^*(u_2), u_2)$.
    
    Notice that since $\mathbf{Y}_o$ is a local maximum of $V - \phi$, for all 
    $t \in ]0, \tau[$, we have 
    \[
        V(\mathbf{Y}(t)) - \phi(\mathbf{Y}(t)) \leq 0 
        \implies V(\mathbf{Y}(t)) \e^{-\theta t} \leq \phi(\mathbf{Y}(t)) \e^{-\theta t},
    \]
    which inserted in~\eqref{eq:ProofSubSolution} leads to
    \[
        \int_0^t \ell(I(s), \cS_1^*(u_2), u_2) \e^{-\theta s} \dd{s} 
         + V(\mathbf{Y}(t)) \e^{-\theta t} - V(\mathbf{Y}_o) \leq -\frac{\nu t}{4}.
    \]
    This contradicts \eqref{eq:DPP}, therefore, $V$ is a viscosity sub-solution.
    \smallskip 

    \textbf{Super-solution.} Let $\phi \in \Ck{1}(\LL{2}((0, 1); \R) \times \R; \R)$ 
    and $\mathbf{Y}_o = (S_o, I_o) \in \HH{1}((0, 1); \R) \times \R$ a local minimum point of $V - \phi$. Without loss of generality, we can assume that $V(\mathbf{Y}_o)-\phi(\mathbf{Y}_o)=0$. 
    We want to show that 
    \[
        \theta V(\mathbf{Y}_o) + \underline{\cH}(\mathbf{Y}_o, \nabla \phi (\mathbf{Y}_o)) \geq 0.
    \]
    Suppose instead that $\theta V(\mathbf{Y}_o) + \underline{\cH}(\mathbf{Y}_o, 
    \nabla \phi (\mathbf{Y}_o)) =: -\nu < 0$. 
    Consequently, there exists $u_2^* \in [0, M_2]$ such that for all 
    $u_1 \in [0, M_1]$,
    \[
    \begin{aligned}
        & \theta \phi (\mathbf{Y}_o) + 
        \left\langle \p_\xi (g(\cdot, u_1, u_2^*)S_o) + (f(\xi) + \alpha(I_o)) S_o , \nabla_1 \phi (\mathbf{Y}_o) \right\rangle_{\LL{2}} \\
        & + \left(\beta I_o - \alpha(I_o) \int_0^1 S_o \right) \nabla_2 \phi (\mathbf{Y}_o) 
    - \ell(I_o, u_1, u_2^*) \leq -\nu.
    \end{aligned}
    \]
    For all strategies $\cS_1 \in \Gamma$, for all $t > 0$ small enough, we have 
     \[
     \begin{aligned}
         & \theta \phi (\mathbf{Y}(t)) + 
    \left\langle\p_\xi (g(\cdot, \cS_1(u_2^*), u_2^*)S(t)) + (f(\xi) + \alpha(I(t)))S(t), 
    \nabla_1 \phi (\mathbf{Y}(t)) \right\rangle_{\LL{2}} \\
    & + \left(\beta I_o - \alpha(I(t)) \int_0^1 S(t) \right) \nabla_2 \phi (\mathbf{Y}(t)) 
    - \ell(I(t)), \cS_1(u_2^*), u_2^*) \leq -\nu,
     \end{aligned}
    \]
    where $\mathbf{Y} = (S, I)$ is the solution to~\eqref{eq:VaxSystem} with initial datum 
    $\mathbf{Y}_o$ and controls $(\cS_1^*(u_2), u_2)$. In view of~\Cref{lmm:ChainRule}, 
    the previous inequality can be re-written, for all $t > 0$ small enough, as 
    \[
        \theta \phi (\mathbf{Y}(t)) -\frac{\dd{}}{\dd{t}} \phi(\mathbf{Y}(t)) 
        - \ell(I(t), \cS_1(u_2^*), u_2^*) \leq -\nu.
    \]
    Multiply by $\e^{-\theta t}$ and integrate:
    \begin{equation}
        \label{eq:ProofSuperSolution}
        \phi(\mathbf{Y}(t)) \e^{-\theta t} - \phi(\mathbf{Y}_o)
        + \int_0^t \ell(I(s), \cS_1(u_2^*), u_2^*) \e^{-\theta s} \dd{s} 
        \geq \frac{\nu}{\theta}(1 - \e^{-\theta t}).
    \end{equation}
    Notice that since $\mathbf{Y}_o$ is a local minimum of $V - \phi$, we have for all 
    $t > 0$ small enough,
    \[
        V(\mathbf{Y}(t)) - \phi(\mathbf{Y}(t)) \geq 0 
        \implies V(\mathbf{Y}(t)) \e^{-\theta t} \geq \phi(\mathbf{Y}(t)) \e^{-\theta t}.
    \]
    We insert it in~\eqref{eq:ProofSuperSolution} to obtain, for all $t > 0$ small enough,
    \[
        V(\mathbf{Y}(t)) \e^{-\theta t}
        + \int_0^t \ell(I(s), \cS_1(u_2^*), u_2^*) \e^{-\theta s} \dd{s} 
        \geq V(\mathbf{Y}_o) + \frac{\nu}{\theta}(1 - \e^{-\theta t}) > V(\mathbf{Y}_o),
    \]
    which contradicts~\eqref{eq:DPP}. This shows that $V$ is a viscosity super-solution.
\end{proof}

\section*{Acknowledgments}
MG and ER were partially supported by the 2024 INdAM GNAMPA project \textsl{Modelling and Analysis through Conservation Laws}.
The authors acknowledge the PRIN 2022 project \emph{Modeling, Control and Games through Partial Differential Equations} {D53D23005620006}, funded by the European Union-Next Generation EU.

MG was partially supported by the Project  funded  under  the
National  Recovery  and  Resilience  Plan  (NRRP)
of  Italian  Ministry  of  University and Research
funded by the European Union (NextGenerationEU Award, No.  CN000023,  Concession  Decree  No.  1033  of  17/06/2022  adopted  by  the  Italian Ministry of University and Research, CUP: H43C22000510001, Centro Nazionale per la Mobilità Sostenibile).

MG was also partially supported by the Project  funded  under  the
National  Recovery  and  Resilience  Plan  (NRRP)
of  Italian  Ministry  of  University and Research
funded by the European Union (NextGenerationEU Award, No.  
ECS\_00000037, CUP H43C22000510001,
MUSA -- Multilayered Urban Sustainability Action).

{\small
  \bibliography{VaxL2}
  \bibliographystyle{abbrv}
}

\end{document}